\newtheorem{thm}{Theorem}[section]
\newtheorem{lem}[thm]{Lemma}
\newtheorem{prop}[thm]{Proposition}
\theoremstyle{definition}
\newtheorem{defi}[thm]{Definition}
\newtheoremstyle{rmk}
  {12pt}                   
  {12pt}                   
  {}                       
  {}                       
  {\normalfont\bfseries}   
  {.}                      
  { }               
  {}
\theoremstyle{rmk}
\newtheorem{rmk}[thm]{Remark}
\renewcommand{\qedsymbol}{$\blacksquare$}
\renewcommand{\labelenumi}{(\arabic{enumi})}
\renewcommand{\labelenumii}{(\alph{enumii})}
\newcommand{\p} {\textnormal{\textsf{P}}}
\newcommand{\E} { \textnormal{\textsf{E}}}
\newcommand{\N} { \mathbb{N} }
\newcommand{\Z} { \mathbb{Z} }
\newcommand{\R} { \mathbb{R} }
\newcommand{\V} { \mathbb{V} }
\newcommand{\new}{}
\newcommand{\1}[1]{{\mathds 1}_{\{#1\}}}
\newcommand{\I}{{\mathds 1}}
\newcommand{\bea}{\begin{eqnarray*}}
\newcommand{\eea}{\end{eqnarray*}}
\newcommand{\lp}[1] {\mathcal{L}(#1)}
\newcommand{\lpr}[1] {\mathcal{L^+}(#1)}
\newcommand{\lpl}[1] {\mathcal{L^-}(#1)}
\newcommand{\pc} {p_c}
\newcommand{\qc} {q_c}
\newcommand{\po} {p_0}
\newcommand{\qo} {q_0}
\newcommand{\mc} {m_c}
\newcommand{\mo} {m_0}
\newcommand{\fip} {\varphi_r}
\newcommand{\fim} {\varphi_{\ell}}
\newcommand{\mm} {m^*}
\begin{document}
\title{Cookie branching random walks}
 \author{Christian Bartsch$^{1}$ \and Michael Kochler$^{1}$ \and
Thomas Kochler$^{1}$ \and Sebastian M\"{u}ller$^{2}$ \and Serguei~Popov$^{3}$}
\date{\today}
\maketitle

{\footnotesize \noindent $^{~1}$
Center for Mathematical Sciences, Technische Universit\"at M\"unchen,
Boltzmannstra\ss e 3, D-85748 Garching, Germany\\
\noindent e-mail: \texttt{christian.bartsch@ma.tum.de, michael.kochler@ma.tum.de, thomas.kochler@ma.tum.de}

\noindent $^{~2}$LATP, Aix-Marseille Universit\'e,
39 rue Joliot Curie, 13453 Marseille cedex 13, France\\
\noindent e-mail: \texttt{mueller@cmi.univ-mrs.fr},
url: \texttt{http://www.latp.univ-mrs.fr/$\sim$mueller}

\noindent $^{~3}$Department of Statistics, 
Institute of Mathematics, Statistics and Scientific Computation,
University of Campinas--UNICAMP, rua S\'ergio Buarque de Holanda 651, 13083--859, Campinas SP,
Brazil \\
\noindent e-mail: \texttt{popov@ime.unicamp.br}, 
url: \texttt{http://www.ime.unicamp.br/$\sim$popov}

}

\begin{abstract}
{\new We consider  branching random walks in discrete time where at each time unit particles first  produce  offspring  and thereafter the produced offspring particles move independently according to some nearest neighbour random walk. In our model, the branching random walks live  on~$\Z$ and the particles behave differently in visited and unvisited sites.} 
Informally, each site on the positive half-line
contains initially a cookie.  On the first visit of a site its cookie is removed and 
particles at positions with a cookie reproduce and move
differently from particles on sites without cookies. 
Therefore, the movement and the reproduction of the
particles depend on the previous behaviour of the population of
particles. {\new We give a classification in recurrence and transience, i.e.}~whether infinitely many particles visit the origin
or not.
\end{abstract}
\textsc{Keywords: } recurrence, transience, self-interacting branching process\\
\textsc{AMS 2000 Mathematics Subject Classification:} 60K35, 60J80

\section{Introduction and results}
\label{s_intro}
In the recent years a lot of attention was attracted by the model called \emph{excited random walk}, which can be informally described in the following way. It is a process that depends on the past through the set of visited sites: {\new  the random walker's movement in a state space (usually $\Z^d$ for $d\ge1$) at time $n\in\N_0$ depends on whether the random walker has already visited its current position before time $n$. } Such a model was introduced in~\cite{BW} {\new  and studied in numerous subsequent papers. We refer for example to~}\cite{BS08, KZ08, Z05} (one-dimensional case, where, as usual, more complete results are available), \cite{BS09, BR07, HH09, MPRV} ({\new  for the } multi-dimensional case and trees), and {\new  the} references therein. This model is also frequently called \emph{cookie random walk} {\new  the idea being } that initially all sites contain (one or several) cookies {\new  which are consumed by the random walker at the time of its first visit of the respective site. Whenever the random walker consumes a cookie at some site, this changes the transition probabilities at this site} (usually by giving {\new  the random walk } a drift in some direction).

In this paper we {\new  adopt the idea of having consumable cookies at certain sites to branching random walks. Hence, } we consider not only {\new  one single random walker or one particle } that walks around {\new  in a state space}, but {\new  a whole population or cloud of particles which independently produce offspring particles according to given offspring distributions. Thereafter, the newly produced particles move independently according to given transition probabilities. }
The transition and branching parameters depend on whether the site was visited before or not. {\new  More precicely, using the above ``cookie'' interpretation, it can be pictured} that initially each site contains a cookie which is removed when at least one particle visits the site. {\new  Thus,} we call our model cookie branching random walk (CBRW).

Different kinds of models related to branching random walks recently appeared in the literature; we refer to~\cite{CP, CP07, CometsYoshida, HuShi, Mueller}. As far as we know, however, the situation when the behaviour of the branching random walk is changed in the visited sited was previously not considered.

However, it is interesting to note that there is a model that lies in some sense inbetween the excited random walk and the CBRW. This model is usually called \emph{frog model} (we cite for example~\cite{AMPR, CQR}), and it can be described in the following way: the particles do not branch in already visited sites, and when one or several particles visit a new site, \emph{exactly one of them} is allowed to branch. (Another interpretation is that initially every site contains a number of sleeping {\new frogs} and an active {\new frog} is placed somewhere; when an active {\new frog  jumps on top of sleeping frogs, } those are activated too.)

Let us now turn to the formal description of the CBRW. First, we have to choose the initial configuration of the cookies. In this paper we restrict ourselves to the case in which we have one cookie at every non-negative integer and no cookies at the negative 
integers. Thus, {\new  if $c_n(x)$ denotes the number of cookies at position $x\in\Z$ at time $n\in\N_0$, the cookie configuration as described above is given by }
\[
 c_0(x):=\begin{cases}
        1,& \text{if } x\ge0,\\
        0,&  \text{if } x<0.
         \end{cases}
\]
As it turns out, the above configuration of cookies 
is a natural choice for an initial configuration in order to point out the
 essential differences in the evolution of the process. In particular,
 further results for the initial configuration
$(c_0(x)=1\text{ for all }x\in\Z)$
can be derived easily (cf.\ Section~\ref{section6}).
At time 0 the CBRW starts with one initial particle at the origin. 
To specify the evolution of the population of particles, we need 
the following ingredients:
\begin{itemize}
 \item \textit{the cookie offspring distribution}
$\mu_c=\Big(\mu_c(k) \Big)_{k\in\N_0}$ with mean
$\mc:= \sum_{k=1}^{\infty} k \mu_c(k)$;
 \item \textit{the cookie transition probabilities}
$\pc \in(0,1)$, $\qc:=1-\pc$;
 \item \textit{the no-cookie offspring distribution}
$\mu_0=\Big(\mu_0(k)\Big)_{k\in\N_0}$ with mean
$\mo:= \sum_{k=1}^{\infty} k \mu_0(k)$;
 \item \textit{the no-cookie transition probabilities}
$\po \in(0,1)$, $\qo:=1-\po$.
\end{itemize}
We say a particle produces offspring according to an offspring distribution $\mu=(\mu(k))_{k\in\N_0}$ if the probability of having $k$ offspring is $\mu(k)$. {\new  Having fixed the above quantities, } the population of particles evolves at every discrete time unit $n\in\N_0$ according to the following rules:
\begin{enumerate}
 \item
First, every existing particle produces offspring independently of the other particles. Each particle either reproduces according to the offspring distribution $\mu_c$ if there is a cookie at its position or according to~$\mu_0$ otherwise. After that the parent particle dies.
\item
Secondly, after the branching the newly produced offspring particles move independently of each other either one step to the right or one step to the left. Again the movement depends on whether the particles are at a position with or without a cookie. If there is a cookie, each particle moves to the right (left) with probability $\pc$ $\left(\qc\right)$. Otherwise, if there is no cookie, the transition probabilities are given by~$\po$ and~$\qo$.
\item 
Finally, each cookie which is located at a position where at least one particle has produced offspring is removed. We note that different particles share the same cookie if they are at a position with a cookie at the same time. Moreover, due to the chosen initial configuration of the cookies only the leftmost cookie can be consumed at every time step.
\end{enumerate}

We now introduce some essential notations and assumptions. 
Since we do not want the process to die out, we assume that
\[
\mu_c(0)=\mu_0(0)=0
\]
holds. Further, to avoid additional technical difficulties, 
we suppose that we have
\begin{equation} \label{assm}
M:=\sup\left\{k\in\N_0:\mu_c(k)+\mu_0(k)>0\right\}<\infty.
\end{equation}
In fact, we believe that the results remain true if we replace~\eqref{assm} 
by the assumption that the cookie and the no-cookie offspring variance is 
finite.
In the following we want to distinguish different particles of the CBRW by using the usual Ulam-Harris labelling. 
Therefore, we enumerate the offspring of every particle and introduce the set
\[
\V:=\bigcup_{n\in\N_0}\N^n
\]
as the set of all particles which may be produced at some time in the whole process. Here $\N^{0}:=\{\emptyset\}$ is defined as the set containing only the root of the tree. In this setting, $\nu =(\nu_1,\nu_2,\ldots,\nu_n) \in \V$ labels the particle which is the $\nu_n$-th offspring  of the particle $(\nu_1,\nu_2,\ldots,\nu_{n-1})$. By iteration we can trace back the ancestral line of $\nu$ to the initial particle $\emptyset$. Further, the generation (length) of the particle $\nu\in\V$ is denoted by~$|\nu|$,
and for two particles $\nu,\eta\in\V$ we write $\nu\succ\eta\ (\text{respectively, }\nu \succeq \eta)$ if~$\nu$ is a descendant of the particle~$\eta$ (respectively, if~$\nu$ is a descendant of~$\eta$ or~$\eta$ itself). We use the same notation $\nu\succ U$ (respectively, $\nu\succeq U$) for some set $U\subset\V$ if there is a particle $\eta\in U$ with $\nu\succ\eta$ (respectively, $\nu\succeq\eta$). 
{\new The position of a particle is the place where it jumps to, i.e.~the position at the end of step (2) above.}
With the these notations, we can consider the actually produced particles in the CBRW. For $n \in \N_0$ and $x \in \Z$ let $Z_n(x) \subset \N^n \subset \V$ denote the random set of particles which are at position~$x$ at time~$n$. Thus
\[
Z_n:=\bigcup_{x\in\Z}Z_n(x)
\]
is the set of all particles which exist at time~$n$ and using this we can define ${\mathcal Z}:=\bigcup_{n\in\N_0}Z_n$ as the set of all particles ever produced. Then, for every particle $\nu\in {\mathcal Z}$ we write~$X_\nu$ for its random position in~$\Z$ and the collection of all positions of all particles $(X_\nu)_{\nu\in{\mathcal Z}}$ is what we call CBRW. Further, the position of the leftmost cookie is denoted by
\begin{align*}
l(n):=&\ \min\{x\in\N:c_n(x)=1\}.
\end{align*}

Now, we are able to define the set of particles~$\lp{n}$ which is crucial for our considerations:
\begin{align*}
\lp{n}:=&\ Z_n(l(n)).
\end{align*}
The particles that belong to $\lp{n}$ are located at the position of the leftmost cookie and thus they are the only particles which produce offspring according to $\mu_c$. We call the process $\big(\lp{n}\big)_{n\in\N_0}$ \emph{leading process} (and use the abbreviation LP) since it contains the rightmost particles if $\lp{n}\neq\emptyset$. One key observation for the understanding of the CBRW is that the particles in the LP constitute a Galton-Watson process (GWP) as long as there are particles 
in the LP. The associated mean offspring is given by~$\pc\mc$ and thus we call the LP supercritical (respectively, subcritical, or critical) if~$\pc\mc$ is greater than~$1$ (respectively, smaller than~$1$, or equal to~$1$).

As it is usually done in the context of branching random walks (BRW), we now define three different regimes:
\begin{defi}
\label{def_rec_trans}
A CBRW is called
\begin{enumerate}
\item
\textit{strongly recurrent} if a.s.\ infinitely many particles visit the origin, i.e.\ $\p\left(|Z_n(0)|\xrightarrow[n\to\infty]{} 0\right)=0$,
\item
\textit{weakly recurrent} if $\p\left(|Z_n(0)| \xrightarrow[n\to\infty]{} 0\right)\in(0,1)$,
\item
\textit{transient} if $\p\left(|Z_n(0)| \xrightarrow[n\to\infty]{}0\right)=1$.
\end{enumerate}
\end{defi}
We mention that these regimes may have different names in the literature; for instance, strong local survival, local survival, and local extinction of~\cite{GMPV} correspond to strong recurrence, recurrence, and transience of the present paper. The transient regime may be subdivided into \emph{transient to the left} (resp. \emph{transient to the right}) if the negative (resp. positive) integers are visited infinitely many times. Criteria for the recurrence/transience behaviour of BRW are well-known in the literature. In our setting the BRW of interest is the process related to the behaviour of the particles without cookies. In the following we call this process BRW without cookies. It is a BRW in the usual sense started with one particle at 0, with offspring distribution~$\mu_0$ and transition probabilities~$\po$, $\qo$ to the nearest neighbours. {\new  For this process we have the following proposition that goes back to fundamental papers by Biggins \cite{Bi}, Hammersley \cite{Ha}, and Kingman \cite{Ki}; for a proof we refer to Theorem~18.3 in~\cite{peres} and Theorem~3.2 in~\cite{mueller}.}
\begin{prop}The BRW without cookies is~\label{prop1.11}
\begin{enumerate}
\item transient to the right iff 
\[
\po > \frac12\quad\textnormal{and}\quad\mo\le\frac{1}{2\sqrt{\po \qo}},
\]
\item transient to the left iff 
\[
\po < \frac12\quad\textnormal{and}\quad\mo\le\frac{1}{2\sqrt{\po \qo}},
\]
\item and strongly recurrent in the remaining cases.
\end{enumerate}
\end{prop}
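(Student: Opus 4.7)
My plan rests on the \emph{many-to-one identity}: for any bounded measurable $f:\Z\to\R$,
$$\E\!\left[\sum_{|\nu|=n} f(X_\nu)\right] = \mo^n\,\E[f(S_n)],$$
where $(S_n)_{n\in\N_0}$ is a nearest-neighbour random walk on~$\Z$ with step law $(\po,\qo)$ started at~$0$. Taking $f=\1{\cdot=0}$ and applying Stirling to $\binom{2k}{k}(\po\qo)^k$ gives
$$\E[|Z_n(0)|]\;\sim\;C\,\frac{\bigl(2\mo\sqrt{\po\qo}\bigr)^n}{\sqrt{n}}\qquad\text{along even }n,$$
so $\rho:=2\mo\sqrt{\po\qo}$ is the quantity governing the trichotomy.

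For the transient regime ($\rho\le 1$, i.e.\ $\mo\le\tfrac{1}{2\sqrt{\po\qo}}$), I would decompose visits to~$0$ along \emph{first-return} genealogy: call a particle $\nu$ with $X_\nu=0$, $|\nu|\ge1$ a first-returner if no proper ancestor of $\nu$ sits at~$0$. Different first-returners seed independent copies of the visit-to-$0$ process, so the total number of particles ever at~$0$ forms a Galton--Watson tree whose offspring mean, by many-to-one, equals
$$F(\mo):=\sum_{n\ge1}\mo^n\,\p\bigl(S_n=0,\,S_k\ne 0\text{ for }1\le k\le n-1\bigr).$$
The classical renewal identity $\sum_n\p(S_n=0)z^n=(1-F(z))^{-1}$ gives $F(\mo)<1$ for $\rho<1$ and $F(\mo)=1$ for $\rho=1$. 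Hence the return Galton--Watson process is sub- or critical, and being non-degenerate it dies out almost surely, so $|Z_n(0)|=0$ for all large $n$. To identify the direction, I apply many-to-one to $f=\1{\cdot\le 0}$ together with Cram\'er's theorem: if $\po>\tfrac12$ the one-step rate function is strictly positive at~$0$, and the expected number of generation-$n$ particles at positions $\le 0$ decays exponentially. Combined with the almost-sure survival of the BRW (from $\mu_0(0)=0$), the surviving lineages must escape to $+\infty$, giving transience to the right; the case $\po<\tfrac12$ is symmetric.

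For the strongly recurrent regime ($\rho>1$, or the edge case $\po=\tfrac12,\mo=1$ in which $\mu_0(0)=0$ forces $\mu_0=\delta_1$ and the process reduces to a recurrent simple symmetric random walk), I would appeal to Biggins' theorem on the asymptotic shape of the BRW. Writing $\Lambda^*$ for the Cram\'er rate function of a single step, so $\Lambda^*(0)=-\log(2\sqrt{\po\qo})$, the speeds of the leftmost and rightmost particles are $v_\ell=\inf\{x:\log\mo\ge\Lambda^*(x)\}$ and $v_r=\sup\{x:\log\mo\ge\Lambda^*(x)\}$. The condition $\rho>1$ is precisely $\log\mo>\Lambda^*(0)$, which places~$0$ in the open interval $(v_\ell,v_r)$. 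Biggins' theorem then yields almost-sure exponential growth of the number of particles in any neighbourhood of~$0$, so $|Z_n(0)|\ge 1$ for all sufficiently large even~$n$ almost surely, i.e.\ $\p(|Z_n(0)|\to 0)=0$.

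The main obstacle is the almost-sure (as opposed to in-expectation) statement underlying Biggins' theorem: it requires an $L\log L$-type moment condition on the offspring law, which is automatic under the finite-support hypothesis~\eqref{assm}. The transience part is comparatively soft, reducing to the classical fact that non-degenerate sub- or critical Galton--Watson processes go extinct almost surely.
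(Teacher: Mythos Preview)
The paper does not actually prove this proposition: it simply cites Biggins, Hammersley and Kingman together with Theorem~18.3 in~\cite{peres} and Theorem~3.2 in~\cite{mueller}. So there is no in-paper argument to compare against line by line. Your use of the many-to-one formula and Biggins' shape theorem on the recurrent side is precisely the content of those references, and your first-return Galton--Watson decomposition for the transient side is a clean variant of the argument behind the Gantert--M\"uller result; in this sense your route is the same as what the paper is pointing to.

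There is, however, one genuine gap in the \emph{direction} argument at the boundary value $\rho=2\mo\sqrt{\po\qo}=1$. You assert that for $\po>\tfrac12$ the expected number of generation-$n$ particles at positions $\le 0$ decays exponentially. By many-to-one this expectation equals $\mo^{\,n}\,\p(S_n\le 0)$; Cram\'er gives $\p(S_n\le 0)\asymp e^{-n\Lambda^*(0)}$, but after multiplying by $\mo^{\,n}$ you get something of order $\rho^n/\sqrt{n}$. For $\rho<1$ this is summable and Borel--Cantelli finishes the job, but for $\rho=1$ it is only of order $n^{-1/2}$, which is not summable, and the conclusion that non-positive sites are visited only finitely often does not follow from a first-moment bound. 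The repair is short and uses exactly the objects the paper sets up in Section~\ref{s_prelim}: for $\po>\tfrac12$ one has $\mo\ge 1>\tfrac{1}{2\po}$, and a direct check of~\eqref{prop1.11a.r2} shows $\fim<1$, so the embedded process $(\Lambda_n^-)_{n\ge 0}$ of~\eqref{deflam} is a subcritical GWP and a.s.\ dies out. Hence there is some random $N$ with no particle ever reaching $-N$; combining this with your first-return argument applied at each of the finitely many sites $-N+1,\ldots,0$ yields finitely many visits to the non-positive half-line, and survival then forces transience to the right.
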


In the transient cases, we define
\[
 \fim = {\new
         \frac{1}{2\po\mo}  \left(1 - \sqrt{1-4\po\qo\mo^2}\right).
   }
\]
{\new We note that $\fim$ reduces to $\min\{1 , \tfrac{\qo}{\po} \}$ if we assume $\mo =1$}. An interpretation of the quantity~$\fim$ is given  in Section~\ref{s_prelim} below.

Now, we are ready to formulate the main results of this paper, which give the classification of the process with respect to weak/strong recurrence in the sense of Definition~\ref{def_rec_trans}.
\renewcommand{\labelenumi}{(\alph{enumi})}
\renewcommand{\labelenumii}{(\roman{enumii})}
\begin{thm}
\label{thm1}
Suppose that the BRW without cookies is transient to the right.
\begin{enumerate}
\item
If the LP is supercritical, i.e.\ $\pc\mc > 1$ holds, then
\begin{enumerate}
\item
the CBRW is strongly recurrent iff $\pc\mc  \fim \ge 1$,
\item
and the CBRW is transient {\new to the right} iff $\pc\mc \fim < 1$.
\end{enumerate}
\item
If the LP is subcritical or critical, i.e.\ $\pc\mc \leq  1$ holds, 
then the CBRW is transient {\new to the right}.
\end{enumerate}
\end{thm}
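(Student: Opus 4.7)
The plan is to analyse the leading process (LP) via its Galton--Watson structure and then control the contributions to the origin from the \emph{left-movers}, i.e.\ offspring of LP particles that move one step to the left and thus enter the no-cookie region. Each LP particle at position $l(n)$ produces in mean $p_c m_c$ right-moving offspring (forming the next LP at $l(n)+1$) and $q_c m_c$ left-movers at $l(n)-1$, each seeding an independent BRW-without-cookies subtree. Under the hypothesis that the BRW without cookies is transient to the right, its leftmost particle drifts to $+\infty$ almost surely, so every such subtree visits the origin only finitely many times a.s., and moreover every such subtree almost surely reaches every higher integer. The latter implies that even when the pure LP dies out, it is almost surely revived by one of the previously spawned subtrees hitting the current leftmost-cookie position, so $l(n)\to\infty$ almost surely in every subcase of the theorem. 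This already gives the ``to the right'' part of the statements in (a)(ii) and (b), since the positive integers are then visited infinitely often through the cookie-eating LP.

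The quantitative key is the interpretation of $\varphi_\ell$ developed in Section~\ref{s_prelim}: $\varphi_\ell$ is the expected number of descendants at $-1$ of a single BRW-without-cookies particle at $0$, and it solves the quadratic $m_0 p_0 x^2 - x + m_0 q_0 = 0$. Iterating the branching property, the expected number of descendants at $0$ of one such particle placed at position $k\ge 0$ equals a positive constant $c$ times $\varphi_\ell^{k}$. Combined with the fact that on the pure-LP-survival event (of positive probability in case~(a)) the expected number of left-movers born at position $k$ is $(p_c m_c)^{k+1}q_c m_c$, a first-moment computation gives the expected total number of visits to the origin as a constant plus
\[
 c\, p_c q_c m_c^{2}\sum_{k\ge 0}(p_c m_c\,\varphi_\ell)^{k},
\]
which is finite precisely when $p_c m_c\varphi_\ell<1$. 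This proves part~(a)(ii). For part~(b), the LP is subcritical or critical, so each excursion dies out almost surely; enumerating the successive excursion-starting positions $l_1<l_2<\cdots$ (with $l_j\ge j-1$), the same first-moment computation performed excursion-by-excursion bounds the expected visits to $0$ contributed by excursion $j$ by a constant times $\varphi_\ell^{l_j-1}$, giving a summable total and hence transience.

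Part~(a)(i) is the hardest: the sum above diverges and one needs an almost-sure statement. The plan is to identify an embedded branching structure whose per-level offspring mean equals $p_c m_c\varphi_\ell$ and whose total population counts descendants reaching the origin; supercriticality of this embedded process on the LP-survival event, together with a second-moment / Kesten--Stigum argument (finite second moments being guaranteed by the bounded-offspring assumption~\eqref{assm}), yields almost surely infinitely many visits to $0$ on that event. Since the LP is almost surely revived even after extinction of a single excursion, the same conclusion extends to probability one via a zero--one argument on the tail sigma-field of the CBRW.

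The main obstacle is the boundary case $p_c m_c\varphi_\ell=1$ in~(a)(i), where the embedded branching process is critical and standard second-moment techniques no longer yield almost-sure infinitude; this will require a truncation or Seneta--Heyde-style argument together with explicit control of the fluctuations of the LP. A secondary technical difficulty throughout is that the LP with revivals is not literally a Galton--Watson process, so the arguments must be carried out using the decomposition into independent excursions indexed by the random sequence of leftmost-cookie positions, with care taken to handle the random seed sizes of successive revivals.
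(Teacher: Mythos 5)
Your part (a)(ii) is essentially the paper's argument (a first--moment bound on the particles leaving the LP at level $n$, each contributing expected visits to the origin of order $\fim^{\,n}$, summable when $\pc\mc\fim<1$), and your observation that the LP is a.s.\ revived so that $l(n)\to\infty$ correctly yields the directional statements. The rest has two genuine gaps. For (a)(i), your route via an embedded branching process of mean $\pc\mc\fim$ plus Kesten--Stigum cannot prove the theorem as stated: at the boundary $\pc\mc\fim=1$ (which the claim includes) that embedded process is critical and dies out a.s., so no truncation or Seneta--Heyde device can produce survival, and you offer no substitute; moreover, even for $\pc\mc\fim>1$, supercriticality only gives \emph{positive} probability of infinitely many visits, and the proposed ``zero--one argument on the tail sigma-field'' is unsupported -- this is a self-interacting process and no $0$--$1$ law is available. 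The paper's mechanism is different and handles the boundary case automatically: after the a.s.\ finite time $\sigma^*$ beyond which the LP never dies, $|L_n|\ge(\pc\mc)^n n^{-1}$ eventually, each packet $L_n$ reaches the origin with probability at least $1-\bigl(1-c\,\fim^{\,n}\bigr)^{(\pc\mc)^n n^{-1}}\ge C/n$ (Lemma~\ref{lemxx}), and since after $\sigma^*$ these packets evolve independently of the cookies, a conditional Borel--Cantelli argument with the divergent series $\sum_n 1/n$ gives a.s.\ infinitely many visits.

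The larger gap is in part (b). Your excursion-by-excursion bound ``expected visits from excursion $j$ $\le$ const $\cdot\,\fim^{\,l_j-1}$'' tacitly assumes each revived excursion is seeded by $O(1)$ particles. It is not: when $\mo>1$ the no-cookie cloud grows exponentially, and the LP at its restart position $l_j$ can be reseeded by exponentially many particles, so the relevant quantity is $\sum_j(\text{seed}_j)\,\fim^{\,l_j}$, a race between exponential bulk growth and geometric decay in the distance. Settling this race is precisely the hard core of the paper's proof: an upper bound $|\lp{n}|\le(\max\{1,\po\mo\}+\gamma)^n$ on the LP size (Proposition~\ref{crccor0}), lower bounds on the speed of cookie consumption, namely $\liminf_n l(n)/n>\lambda>0$ in general and $l(n)/n\to1$ when $\po\mo>1$ (Proposition~\ref{crcprop2}), and the numerical inequality $\fim\,\po\mo\le2\qo\mo\cdot\po\mo\le\tfrac12$, which is where the hypothesis of transience to the right enters quantitatively. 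None of this is present in your sketch; the closing remark that ``care must be taken to handle the random seed sizes of successive revivals'' names exactly the missing step but does not supply it, so as written the summability claim in (b) is unproved and can not be obtained by the naive first-moment computation alone.
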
 

\begin{thm}\label{thm2}
Suppose that the BRW without cookies is strongly recurrent.
Then the CBRW is strongly recurrent, no matter whether the LP
 is subcritical, critical or supercritical.
\end{thm}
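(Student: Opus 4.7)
The intuition is that, under strong recurrence of the BRW without cookies, the cookie region is eventually pushed off to infinity, so that the CBRW asymptotically inherits the no-cookie recurrent dynamics. I would implement this in two stages.

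\emph{Stage 1: $l(n)\to\infty$ almost surely.} Suppose for contradiction that with positive probability $l(n)$ stays bounded. On this event there is an a.s.\ finite random time $T$ after which $\lp{n}=\emptyset$ for every $n\ge T$, so that $l(n)=l(T)=:L^{*}$ for all such $n$. In particular, no particle can visit $L^{*}$ at any time $n\ge T$ (otherwise the leading process would be revived). But all particles at time $T$ lie strictly to the left of $L^{*}$, and for $n\ge T$ they remain in the cookie-free region; hence from time $T$ onwards they evolve as an ordinary BRW without cookies with parameters $\mu_0,p_0,q_0$, started from a random non-empty configuration. By the assumed strong recurrence of the BRW without cookies and translation invariance of $\Z$, such a process a.s.\ visits every lattice site --- in particular $L^{*}$ --- infinitely often, a contradiction. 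Hence $l(n)\to\infty$ a.s.

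\emph{Stage 2: Visits to the origin are a.s.\ infinite.} Since $l(n)\to\infty$, the LP is non-empty at infinitely many times. At each such time the conditional probability of producing at least one left-moving offspring is at least $\qc>0$ (using $|\lp{n}|\ge 1$ and $\mu_c(0)=0$), so a conditional Borel--Cantelli argument gives a.s.\ infinitely many non-LP particles over the course of the process. Consider the subtree of one of them, say $\nu$: while its descendants remain strictly to the left of the current leftmost cookie, they reproduce and move according to $\mu_0,p_0,q_0$, so the ``purely non-LP'' part of the subtree of $\nu$ is a BRW without cookies, possibly subject to an absorbing boundary at the moving site $l(n)$. Since $l(n)\to\infty$, this boundary eventually retreats past any bounded region; combined with strong recurrence of the unrestricted BRW without cookies, this should imply that position $0$ is visited infinitely often.

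The main obstacle I anticipate is the last step: rigorously extracting an unrestricted BRW-without-cookies subtree despite the absorbing boundary at $l(n)$. I would proceed iteratively --- whenever a descendant of $\nu$ does re-enter LP, the corresponding LP excursion again spawns its own non-LP offspring (by the same Borel--Cantelli argument), and the procedure is repeated. Together with the fact that a.s.\ infinitely many non-LP particles are produced, this cascade should eventually yield a non-LP subtree to which the strong recurrence of the BRW without cookies applies directly, concluding the proof.
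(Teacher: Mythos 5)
Your Stage~1 is essentially sound: the informal ``after $T$ the process is a free BRW'' step can be made rigorous by taking a countable union over deterministic times $m$ and levels $L$ of the events ``all particles are strictly left of $L=l(m)$ at time $m$ and no particle ever hits $L$ afterwards'', coupling each with an unrestricted BRW without cookies, and using that the strongly recurrent BRW a.s.\ visits every site. Likewise, the conditional Borel--Cantelli argument producing infinitely many non-LP particles is fine. The genuine gap is exactly the step you flag and then wave at: passing from ``infinitely many spawned particles, each starting a BRW without cookies killed at the moving front $l(\cdot)$'' to ``the origin is visited infinitely often''. Strong recurrence of the \emph{free} BRW does not transfer to the killed subtrees, and your cascade heuristic fails as a general mechanism: the spawned particles sit at height $l(n)-1\to\infty$, and the probability that one killed subtree ever reaches $0$ is in general \emph{not} bounded below. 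Take $\mu_0(1)=1$, $\po=\tfrac12$ (strongly recurrent case): off the front there is no branching at all, every excursion a.s.\ returns to the front, so the cascade never produces a subtree ``to which strong recurrence applies directly''; an excursion started at $l(n)-1$ reaches $0$ before the front only with probability of order $1/l(n)\to0$, and if moreover $\pc>\tfrac12$ one must additionally rule out that the single cookie per site creates enough drift for transience. So recurrence here is a quantitative statement about the balance between the number of spawned particles and their decaying individual success probabilities (plus an excited-random-walk input), none of which your sketch supplies; ``$l(n)\to\infty$'' by itself carries no such information.

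For comparison, the paper closes precisely this gap by splitting into two cases. For $\mo=1$ it reduces to a single lineage and sandwiches it between a symmetric random walk and a once-excited random walk, invoking the known recurrence of the latter (Benjamini--Wilson/Zerner) --- a nontrivial external input that your argument would have to reprove. For $\mo>1$ it uses large deviations: strong recurrence gives $\log\mo>I(-\varepsilon)+\delta$, hence an embedded \emph{supercritical} GWP of particles drifting left at linear speed, and therefore a \emph{uniform} constant $c>0$ such that every particle of the CBRW sends, with probability at least $c$ and without any cookie contact, a descendant to the negative half-axis; once a particle sits at a position $\le 0$, the nearest-neighbour structure forces any lineage to pass through $0$ before it can touch the front (which is at $\ge1$), so the strong recurrence of the free BRW applies verbatim from there and yields a visit to $0$. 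It is this kind of uniform (or at least summable) lower bound, produced by an explicit construction, that your proposal is missing.
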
 
\begin{thm}\label{thm3}
Suppose that the BRW without cookies is transient to the left.
\begin{enumerate}
\item
If the LP is supercritical, i.e.\ $\pc\mc > 1$ holds, 
then the CBRW is weakly recurrent.
\item
If the LP is critical or subcritical, i.e.\ $\pc\mc\le1$ holds, 
then the CBRW is transient {\new to the left}.
\end{enumerate}
\end{thm}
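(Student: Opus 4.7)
My plan for Theorem~\ref{thm3} follows the broad scheme of the preceding theorems: identify the leading process as the main driver, couple the left-moving particles shed by the LP with independent BRWs without cookies, and invoke Proposition~\ref{prop1.11}. Throughout I write $\rho$ for the survival probability of the LP regarded as a Galton--Watson process with mean offspring $\pc\mc$, and I set $\tilde\varphi:=\frac{1}{2\qo\mo}\bigl(1-\sqrt{1-4\po\qo\mo^2}\bigr)$ to be the mirror of $\fim$; in the transient-to-left regime one has $\tilde\varphi\in(0,1)$ and it can be interpreted as the probability that the BRW without cookies started at $0$ ever has a particle at $+1$.

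For part (a), since $\pc\mc>1$ we have $\rho\in(0,1)$, and I would treat the two complementary events $\{\text{LP survives}\}$ and $\{\text{LP dies}\}$ separately. On the first, of probability $\rho$, the leftmost cookie satisfies $l(n)\to\infty$ and the LP sheds infinitely many left-movers at positions diverging to $+\infty$. Each left-mover launches an independent BRW without cookies which, being transient to the left, has infimum over time equal a.s.~to $-\infty$; since descendant lineages move by nearest-neighbour steps, some lineage of each BRW must visit the origin at least once. As successive left-movers are shed at strictly increasing times, the origin is visited at infinitely many distinct time instants, hence $|Z_n(0)|\not\to0$ on this event. On the complementary event $\{\text{LP dies at time }\tau\}$, only finitely many particles remain, all at positions $\le l(\tau)-1$. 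For each such particle at $x$, its future BRW without cookies reaches $l(\tau)$ with probability at most $\tilde\varphi^{l(\tau)-x}$, and independence across the finite population yields a strictly positive conditional probability that no descendant ever does so; on this further event the LP never restarts, $l_\infty<\infty$, and the process is eventually a finite forest of transient-to-left BRWs, so that $|Z_n(0)|\to0$ a.s. Combining, $\p(|Z_n(0)|\to0)\in(0,1)$.

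For part (b), each LP epoch dies a.s.~in finite time since $\pc\mc\le1$, and the crux is to prove $l_\infty:=\lim_n l(n)<\infty$ a.s. Granted this, the asymptotic evolution is a finite union of BRWs without cookies which are transient to the left, so $|Z_n(0)|\to0$ a.s.~and the negative integers are visited infinitely often. To control the restart count I would iterate the estimate of part (a): given the history up to time $\tau_k$ with leftmost cookie $L_k$, the conditional probability of a $(k{+}1)$-th restart is dominated by $\sum_i\tilde\varphi^{L_k-X_i}$ summed over the finitely many no-cookie particles. The strategy is to show that these restart probabilities are a.s.~summable using the Biggins-type exponential martingale $W_n:=\sum_{\nu\in Z_n}\tilde\varphi^{X_\nu}$ attached to the BRW without cookies ($\tilde\varphi$ being precisely the critical exponential weight); the subcritical/critical assumption $\pc\mc\le1$ is essential here to prevent the LP epochs from amplifying $W_n$. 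A conditional Borel--Cantelli lemma then forces finitely many restarts a.s.

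The main obstacle is this martingale-based control in part (b): one has to track the quantity $W_n$ through both LP epochs (where the effective transitions and branching are driven by the cookie parameters $\pc,\qc,\mc$) and no-cookie phases, and the interplay with the moving leftmost cookie $l(n)$ demands care; the condition $\pc\mc\le 1$ is precisely what is needed so that the cookie phases do not make $W_n$ blow up in expectation. Once this ingredient is in place, the remainder is a routine application of Proposition~\ref{prop1.11} to the residual BRWs without cookies.
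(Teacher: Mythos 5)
Your part~(a) is correct and essentially the paper's own argument: positive probability of local extinction at $0$ from an event on which no particle ever touches a cookie again (the paper uses the even simpler event that all first--generation particles step left and their lines never return to $0$), and recurrence with positive probability from LP survival, the left--movers shed at nonnegative positions, and transience to the left of the BRW without cookies.

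Part~(b), however, is where the whole difficulty of the theorem sits, and the step you defer as ``the main obstacle'' does not go through as described. First, your weight points the wrong way: the exponential weights $\lambda$ for which $\sum_{\nu\in Z_n}\lambda^{X_\nu}$ is a martingale for the BRW without cookies solve $\mo(\po\lambda+\qo\lambda^{-1})=1$, i.e.\ $\lambda=\fim$ or $\lambda=1/\fip$; your $W_n=\sum_\nu\tilde\varphi^{X_\nu}$ with $\tilde\varphi=\fip<1$ has one--step expected factor $\mo(\po\fip+\qo\fip^{-1})=1+\mo(\qo-\po)(\fip^{-1}-\fip)>1$ in the transient--to--the--left regime, so it blows up in expectation even during cookie--free steps. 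The quantity that matches your intended use (bounding the restart probability by $\sum_i\fip^{\,l-X_i}$) is $\sum_\nu\fip^{\,l(n)-X_\nu}$, which is in spirit the paper's $\zeta_1(n)+\zeta_2(n)$. Second, and more seriously, even this correctly normalised quantity is \emph{not} a supermartingale step by step: by \eqref{calc2}, on $\{\zeta_1(n)\neq0\}$ one has $\E[\zeta_1(n+1)+\zeta_2(n+1)\mid\cdot]=\zeta_1(n)\bigl(\pc\mc+\qc\mc(\fip)^2\bigr)+\zeta_2(n)\fip$, and at criticality $\pc\mc=1$ the first coefficient equals $1+\qc\mc(\fip)^2>1$. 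So the claim that $\pc\mc\le1$ ``prevents the LP epochs from amplifying $W_n$ in expectation'' is false at the level of single steps; the favourable drift only appears after evaluating the process along suitable stopping times --- a fixed horizon $h$ with $(\pc\mc)^h+\qc\mc\sum_{i=0}^{h-1}(\pc\mc)^i(\fip)^{h-i+1}<\tfrac12$ in the subcritical case, and the entire LP excursion in the critical case, where one needs the tail estimates for the extinction time of a critical GWP (Propositions~\ref{propo3} and~\ref{propo4}) to show that while a large LP is alive the type--2 mass decays before a restart can occur.

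Finally, even with the supermartingale property above a level $u$ in hand, what one obtains is that $\zeta_1+\zeta_2$ a.s.\ returns below the fixed level $u$, not that the conditional restart probabilities are summable; so your concluding ``conditional Borel--Cantelli'' step is not available along the route you propose. The paper closes the argument differently: each time the count is $\le u$, there is a \emph{uniform} probability $\gamma=\bigl(\qc\,\p(\Lambda_1^+=0)\bigr)^{Mu}>0$ that at most one further cookie is ever consumed, and a geometric--trials argument then gives that a.s.\ only finitely many cookies are eaten, whence transience. The two--type bookkeeping, the stopping--time constructions (different for $\pc\mc<1$ and $\pc\mc=1$), the critical--case extinction--time estimates, and this uniform escape probability constitute the actual proof of part~(b) and are missing from your sketch.
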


\section{Preliminaries}
\label{s_prelim}
Analogously to the notation which we use for the CBRW let $\big(Y_{\nu}\big)_{\nu \in \mathcal{Y}}$ denote the BRW without cookies. Here $\mathcal{Y}$ denotes the set of all particles ever produced and (for every $\nu \in \mathcal{Y}$) $Y_{\nu}$ denotes the random position of the particle $\nu$. We define $\Lambda^+_0=\Lambda^-_0:=1$, and
\begin{align}
\Lambda^+_n:={\new \left|\left\{\nu\in\mathcal{Y}:\ Y_\nu=n,\ 
Y_\eta<n\ \forall \eta\prec\nu \right\}\right|}, \quad
\Lambda^-_n:={\new \left|\left\{\nu\in\mathcal{Y}:\  Y_\nu=-n,\ Y_\eta>-n\ 
\forall \eta\prec\nu \right\}\right|} \label{deflam}
\end{align}
for $n\in\N$. Here $\Lambda_n^+$ (respectively, $\Lambda_n^-$) denotes the random number of particles which are the first in their ancestral line to reach the position $n$ (respectively, $-n$). In addition, we define
\begin{align} \label{deffi}
 \fip:=\E[\Lambda^+_1], \qquad
 \fim:=\E[\Lambda^-_1].
\end{align}
Note that we have
\[
\p\big(\Lambda_1^+<\infty\big) = \p\big(\Lambda_1^-<\infty\big) = 1
\]
if the BRW without cookies $\big(Y_{\nu}\big)_{\nu \in \mathcal{Y}}$ is transient. In this case the processes $\big(\Lambda_n^+\big)_{n\in\N_0}$ and $\big(\Lambda_n^-\big)_{n\in\N_0}$ are both GWPs. An important observation is that $\fip$ and $\fim$ can be expressed using the first visit generating function of the underlying random walk. Thus, denote by $X_{n}$ the nearest neighbour random walk defined by $\p(X_{n+1}=x+1\mid X_{n}=x)=p_{0}$ and $\p(X_{n+1}=x-1\mid X_{n}=x)=q_{0}$. The \emph{first visit generating function} is defined by
\[ F(x,y| z) = \sum_{n=0}^{\infty} \p(X_{n}=y, X_{k}\neq y~ \forall k<n\mid X_{0}=x)z^{n}.\] 
A (short)  thought reveals that $\fip = F(0,1|m_{0})$ and $\fim = F(0,-1|m_{0})$ and standard calculations yield the following formulas; for both arguments we also refer to Chapter 5 in \cite{Woe}.

\begin{prop} \label{prop1.11a} If the BRW without cookies is transient, 
we have
\begin{align}
\fip = \frac{1}{2\qo\mo}  \left(1 - \sqrt{1-4\po\qo\mo^2}\right), 
\quad\text{and} \quad
\fim =\frac{1}{2\po\mo}  \left(1 - \sqrt{1-4\po\qo\mo^2}\right). 
\label{prop1.11a.r2}
\end{align}
\end{prop}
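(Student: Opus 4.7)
The plan is to use the identity $\fip = F(0,1|\mo)$ (and its analogue $\fim = F(0,-1|\mo)$), stated just before the proposition, and reduce the computation to deriving and solving a quadratic equation for the first-visit generating function of the underlying nearest-neighbour random walk. The identity itself follows from a standard ``many-to-one'' argument: for every $n\in\N_0$, the expected number of particles in $\mathcal{Y}$ at generation $n$ along any fixed ancestral path is $\mo^n$, so
\[
\fip = \sum_{n=0}^{\infty} \mo^n\, \p\!\left(X_n = 1,\; X_k < 1\ \text{for all } k<n \,\big|\, X_0=0\right) = F(0,1|\mo),
\]
and the series is finite thanks to the transience assumption.

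Next I would derive a quadratic equation for $F(0,1|z)$ by a first-step decomposition of the random walk. Conditioning on the first step gives
\[
F(0,1|z) = \po z + \qo z\, F(-1,1|z),
\]
and applying the strong Markov property at the first hitting time of $0$ together with translation invariance yields
\[
F(-1,1|z) = F(-1,0|z)\, F(0,1|z) = F(0,1|z)^2.
\]
Setting $z=\mo$ then produces $\fip = \po \mo + \qo \mo\, \fip^{\,2}$, i.e.\ $\qo \mo\, \fip^{\,2} - \fip + \po \mo = 0$.

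The discriminant $1 - 4\po\qo\mo^2$ is nonnegative exactly in the transient regime (Proposition~\ref{prop1.11}), and the two real roots are $\big(1 \pm \sqrt{1 - 4\po\qo\mo^2}\big)/(2\qo\mo)$. Selecting the correct root is the only delicate point: since $F(0,1|z)$ is a power series in $z$ with nonnegative coefficients and $F(0,1|0)=0$, it extends analytically down to $z=0$, whereas the ``plus'' root blows up like $1/(\qo z)$ as $z\downarrow 0$; hence continuity forces the minus sign, giving the formula for $\fip$ in~\eqref{prop1.11a.r2}. Repeating the decomposition with the roles of $\po$ and $\qo$ exchanged yields $F(0,-1|z) = \qo z + \po z\, F(0,-1|z)^2$, and the same root-selection argument produces the formula for $\fim$. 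Beyond the many-to-one identity and the first-step equation, there is no real obstacle here — the mildly subtle step is only the justification of the minus sign, which the limit $\mo \downarrow 0$ settles cleanly.
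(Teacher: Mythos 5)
Your proposal is correct and follows essentially the same route as the paper, which simply asserts $\fip = F(0,1|\mo)$, $\fim = F(0,-1|\mo)$ and refers to the standard first-visit generating function calculations in Chapter~5 of Woess; you merely write out the details (the many-to-one identity, the first-step quadratic $F(0,1|z)=\po z+\qo z\,F(0,1|z)^2$, and the choice of the minus root by continuity at $z=0$) that the paper leaves to the reader.
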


\begin{rmk}
A natural special case is the situation where $\mu_{0}(1)=1$ {\new (and $m_0=1$)}. In this model particles can only branch at positions with a cookie. In sites without cookies the process reduces to an asymmetric random 
walk $\big(Y_n\big)_{n\in\N_0}$ on $\Z$ with transition 
probabilities $\po$ and $\qo$. 
Here $\fip$ and $\fim$ simplify to the probabilities of an asymmetric 
random walk to ever reach $+1$ or $-1$, respectively, i.e.
\begin{align} \label{prop1.11a.r2.2}
\fip= \p \big(\exists n \in \N:\ Y_n = +1 \big) 
= \min \left\{1, \tfrac{\po}{\qo} \right\} \quad \text{and}\quad
\fim= \p \big(\exists n \in \N:\ Y_n = -1 \big) 
= \min \left\{1, \tfrac{\qo}{\po} \right\}.
\end{align} 
\end{rmk}


Next, we collect some known facts about Galton-Watson processes that
will be needed in the sequel.
An important tool for the proofs is to identify GWPs which are 
embedded in the CBRW. For the rest of this paper the processes
\[
\big(GW^{\text{super}}_n \big)_{n \in \N_0}, 
\big(GW^{\text{sub}}_n \big)_{n \in \N_0} \text{ and } 
\big(GW^{\text{cr}}_n \big)_{n \in \N_0}
\]
shall denote a supercritical, subcritical or critical GWP 
started with $z\in \N$ particles with respect to the probability 
measure~$\p_z$. Furthermore, let $T^{\text{super}}, 
T^{\text{sub}} \text{ and } T^{\text{cr}}$
denote the time of extinction corresponding to the above GWPs, i.e.
\[
T^{\text{super}}:= \inf \{n\ge 0: \ GW^{\text{super}}_n=0 \}
\]
and analogously for the subcritical and critical case.
\begin{prop} \label{propo1}
For a subcritical GWP $\big(GW^{\textnormal{sub}}_n \big)_{n \in \N_0}$ 
with strictly positive and finite offspring variance there is a 
constant $c>0$ such that
\begin{equation*}
\lim_{n \to \infty} 
\frac{\p_1 \left( GW_n^{\textnormal{sub}} > 0  \right)}
{ \E_1 \left[ GW_1^{\textnormal{sub}} \right]^n} = c.
\end{equation*}
\end{prop}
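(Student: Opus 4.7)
The proof is the classical Kolmogorov--Yaglom asymptotic, and I would approach it via the offspring generating function. Let $f(s) := \E_1[s^{GW_1^{\textnormal{sub}}}]$ and let $f_n$ denote the $n$-fold iterate of $f$; set $m := f'(1) \in (0,1)$ and $u_n := 1 - f_n(0) = \p_1(GW_n^{\textnormal{sub}} > 0)$. The claim becomes $u_n/m^n \to c$ for some $c \in (0,\infty)$.

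The first step is to Taylor-expand $f$ near $s = 1$. The hypotheses $f(1)=1$, $f'(1)=m$, $0 < f''(1) < \infty$ give, for $s$ in some interval $[1-\delta,1]$,
\[
1 - f(s) = m(1-s) - h(s)(1-s)^2,
\]
where $h$ is non-negative and bounded on that interval with $h(1) = f''(1)/2 > 0$. Since the process is subcritical, $u_n \to 0$, so for all sufficiently large $n$ the value $f_n(0)$ lies in $[1-\delta,1]$, and substituting $s = f_n(0)$ (using $f_{n+1} = f \circ f_n$) yields the recursion
\[
u_{n+1} = m u_n - h(f_n(0))\, u_n^2.
\]
Setting $v_n := u_n / m^n$ this becomes
\[
v_{n+1} = v_n\bigl(1 - h(f_n(0))\, m^{n-1} v_n\bigr).
\]
From the Markov bound $u_n \le \E_1[GW_n^{\textnormal{sub}}] = m^n$ one gets $v_n \le 1$, so the correction term $h(f_n(0))\, m^{n-1} v_n$ is dominated by a constant multiple of $m^{n-1}$ and in particular tends to $0$ and is summable. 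Taking logarithms,
\[
\log v_{n+1} - \log v_n = \log\bigl(1 - h(f_n(0))\, m^{n-1} v_n\bigr),
\]
and $|\log(1-x)| \le 2x$ for $x$ small enough shows that the telescoping series $\sum_n (\log v_{n+1}-\log v_n)$ converges absolutely. Therefore $\log v_n$ converges to a finite limit, which gives $v_n \to c$ for some strictly positive constant $c$ (positivity coming from the fact that each factor in the telescoping product is positive and their logarithms sum to a finite number).

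The main technical point is the second-order expansion with a \emph{uniformly bounded} quadratic remainder; this is exactly where the finite-variance hypothesis is used. Without it one loses the bound on $h$ near $1$, and the simple summability argument $\sum m^{n-1} < \infty$ no longer suffices, so one has to invoke Karamata/Tauberian machinery for regularly varying generating functions. Positivity of the variance ensures that the recursion is genuinely nonlinear (excluding the degenerate case $\mu(0) = 1$ in which the proposition would fail trivially). Everything else reduces to standard iteration of generating functions.
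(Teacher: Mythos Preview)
The paper does not prove this proposition; it simply cites Theorem~2.6.1 in Jagers~\cite{Jagers}. Your write-up is the classical generating-function argument (essentially Kolmogorov's result as presented in Athreya--Ney or Jagers), and it is correct: the finite-variance hypothesis gives $f''(1)<\infty$, hence a uniformly bounded quadratic remainder $h$, and the Markov bound $v_n\le 1$ makes the logarithmic telescoping series absolutely summable.

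One harmless imprecision: you assert $h(1)=f''(1)/2>0$, but $f''(1)=\E_1\big[GW_1^{\textnormal{sub}}(GW_1^{\textnormal{sub}}-1)\big]$ can vanish even when the variance is positive (take Bernoulli$(1/2)$ offspring: positive variance, $m=1/2$, yet $f''\equiv 0$). Your argument never actually uses $h(1)>0$; it only needs $h$ non-negative and bounded above by $f''(1)/2<\infty$, which yields the summable correction $O(m^{n-1})$. In the degenerate linear case $f''\equiv 0$ one has $u_n=m^n$ exactly and $c=1$, so nothing is lost.
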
For a proof see for instance Theorem~2.6.1 in~\cite{Jagers}. 

\begin{prop} \label{propo2}
For a critical GWP $\big(GW^\textnormal{cr}_n \big)_{n \in \N_0}$ 
 with strictly positive and finite offspring variance there is a 
constant $c>0$ such that
\begin{equation*}
\lim_{n \to \infty} n\, \p_1 \left( GW_n^\textnormal{cr} > 0  \right) = c.
\end{equation*}
\end{prop}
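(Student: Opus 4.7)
The statement to prove is the classical Kolmogorov theorem for critical Galton-Watson processes: that $n\p_1(GW_n^{\text{cr}}>0)$ converges to a positive constant (in fact $c=2/\sigma^2$, where $\sigma^2$ is the offspring variance). My plan is the standard generating-function approach.

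Let $f(s):=\E_1[s^{GW_1^{\text{cr}}}]$ denote the offspring probability generating function and let $f_n$ be the $n$-fold iterate of $f$. The basic observation, which I would state first, is that $\p_1(GW_n^{\text{cr}}=0)=f_n(0)$, so setting $q_n:=\p_1(GW_n^{\text{cr}}>0)=1-f_n(0)$, the sequence $q_n$ is strictly decreasing to $0$ (criticality together with positivity of variance excludes degeneracy, and the extinction probability equals $1$).

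The key analytic input is the expansion of $f$ near $s=1$. Using $f(1)=1$, $f'(1)=1$ (criticality), and $f''(1)=\sigma^2\in(0,\infty)$ (the assumed finite, positive variance), Taylor's theorem gives
\[
f(1-s)=1-s+\tfrac{\sigma^2}{2}s^2+o(s^2) \quad \text{as } s\downarrow 0.
\]
Substituting $s=q_n$ in the identity $q_{n+1}=1-f(1-q_n)$ yields $q_{n+1}=q_n-\tfrac{\sigma^2}{2}q_n^2+o(q_n^2)$. Dividing by $q_nq_{n+1}$ and rearranging gives the crucial recursion
\[
\frac{1}{q_{n+1}}-\frac{1}{q_n}=\frac{\sigma^2}{2}+o(1)
\]
as $n\to\infty$ (since $q_n\to 0$).

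From here a Cesàro (Stolz-Cesàro) argument finishes: averaging the telescoping differences gives $\tfrac{1}{nq_n}\to\tfrac{\sigma^2}{2}$, hence $nq_n\to 2/\sigma^2>0$, which is the claim with $c=2/\sigma^2$. The main obstacle in carrying this out rigorously is controlling the error term $o(q_n^2)$ uniformly enough to pass from the pointwise expansion of $f$ near $1$ to a clean recursion for $1/q_n$; this requires knowing a priori that $q_n\to 0$, which in turn uses criticality plus non-degeneracy of the offspring distribution (both guaranteed by the assumption of strictly positive, finite variance). Everything else is a routine application of Taylor's theorem and Cesàro's lemma, so I would simply cite or reproduce this short computation; a standard reference such as Theorem~I.9.1 in Athreya-Ney's \emph{Branching Processes} contains the full argument.
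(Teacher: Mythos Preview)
Your proposal is correct and in fact goes further than the paper: the paper does not give a proof at all but simply refers the reader to Theorem~I.9.1 in Athreya--Ney, which is exactly the generating-function/Stolz--Ces\`aro argument you outlined. So you have reproduced the standard proof behind the cited reference, and there is nothing to add.
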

For a proof see for instance Theorem I.9.1 in \cite{Athreya/Ney}. Using the  inequality $1 - x \leq \exp(-x)$ we obtain the following  consequence of Proposition \ref{propo2}.

\begin{prop} \label{propo3}
For the extinction time $T^\textnormal{cr}$ of a critical 
GWP with strictly positive and finite offspring variance there exists a
 constant $C > 0$ such that
\begin{equation*}
\p_z \big( T^\textnormal{cr} \le n \big) 
\leq \exp \left( - C \frac{z}{n} \right) 
\end{equation*}
for all $n \in \N$ and for all $z\in\N$.
\end{prop}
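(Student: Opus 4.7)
The plan is to exploit the independence of the $z$ initial particles: under $\p_z$ the process decomposes as $z$ independent copies of a critical GWP started with one particle, so each of them must go extinct by time $n$. This gives the identity
\[
\p_z\bigl(T^{\textnormal{cr}} \le n\bigr) \;=\; \bigl(1 - \p_1(GW_n^{\textnormal{cr}} > 0)\bigr)^z,
\]
and the inequality $1-x \le e^{-x}$ immediately turns this into
\[
\p_z\bigl(T^{\textnormal{cr}} \le n\bigr) \;\le\; \exp\Bigl(-z\,\p_1(GW_n^{\textnormal{cr}} > 0)\Bigr).
\]

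The remaining task is to establish a lower bound of the form $\p_1(GW_n^{\textnormal{cr}} > 0) \ge C/n$ that is valid for \emph{every} $n \in \N$, not just asymptotically. Proposition~\ref{propo2} supplies the asymptotic behaviour $n\,\p_1(GW_n^{\textnormal{cr}} > 0) \to c > 0$, so there exist $n_0 \in \N$ and $c_1 > 0$ such that $\p_1(GW_n^{\textnormal{cr}} > 0) \ge c_1/n$ for all $n \ge n_0$. For the finitely many remaining values $1 \le n < n_0$, the probability $\p_1(GW_n^{\textnormal{cr}} > 0)$ is strictly positive (the offspring law has mean one and positive variance, hence charges some integer $k\ge 1$, so survival for any prescribed finite number of generations occurs with positive probability). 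Taking the minimum over this finite range and combining with the asymptotic bound yields a universal constant $C > 0$ with $\p_1(GW_n^{\textnormal{cr}} > 0) \ge C/n$ for all $n \ge 1$.

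Plugging this into the exponential bound above gives $\p_z(T^{\textnormal{cr}} \le n) \le \exp(-Cz/n)$, which is the claim. There is no real obstacle here: the only point that requires a bit of care is verifying the lower bound for small $n$, and this is handled by the observation that positive variance together with mean one forces the offspring distribution to have mass on some $k \ge 1$, so the process can survive any fixed number of generations with positive probability.
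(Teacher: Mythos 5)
Your proof is correct and follows essentially the same route as the paper, which obtains the bound directly from Proposition~\ref{propo2} together with the inequality $1-x\le\exp(-x)$ applied to the $z$ independent ancestral lines. The only addition is your explicit treatment of the finitely many small values of $n$ to make the lower bound $\p_1(GW_n^{\textnormal{cr}}>0)\ge C/n$ uniform, a detail the paper leaves implicit.
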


\begin{prop} \label{propo4}
For the extinction time $T^\textnormal{cr}$ of 
a critical GWP with strictly positive and finite offspring variance
 there exists a constant $C> 0$ such that
\begin{equation*}
\p_z \big( T^\textnormal{cr} = n \big) \leq C \frac{z}{n^2}
\end{equation*}
for all $n \in \N$ and for all $z\in\N$.
\end{prop}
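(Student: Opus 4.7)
The plan is to reduce the problem to the one-particle case via the branching property, and then to bound $\p_1(T^\textnormal{cr}=n)$ by exploiting the recursion for non-extinction probabilities.

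First I would use the fact that a GWP started from $z$ particles is the superposition of $z$ independent copies each started with one particle, so that $T^\textnormal{cr}$ under $\p_z$ has the same law as the maximum of $z$ i.i.d.\ copies of $T^\textnormal{cr}$ under $\p_1$. Writing $Q_n:=\p_1(T^\textnormal{cr}>n)=\p_1(GW_n^\textnormal{cr}>0)$ this yields
\[
\p_z(T^\textnormal{cr}\le n)=(1-Q_n)^z\qquad\text{and thus}\qquad
\p_z(T^\textnormal{cr}=n)=(1-Q_n)^z-(1-Q_{n-1})^z.
\]
Because $Q_n\le Q_{n-1}$ and the derivative of $x\mapsto(1-x)^z$ is bounded in absolute value by $z$ on $[0,1]$, the mean value theorem gives
\[
\p_z(T^\textnormal{cr}=n)\le z\,(Q_{n-1}-Q_n)=z\cdot\p_1(T^\textnormal{cr}=n),
\]
so it suffices to prove a bound of the form $\p_1(T^\textnormal{cr}=n)\le C/n^2$ that is uniform in $n$.

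Next I would use the standard recursion $Q_{n+1}=1-f(1-Q_n)$, where $f$ is the offspring generating function. Criticality gives $f(1)=f'(1)=1$ and finite positive variance gives $f''(1)=\sigma^2\in(0,\infty)$, so the Taylor expansion at $1$ reads $f(1-x)=1-x+\tfrac{\sigma^2}{2}x^2+o(x^2)$ as $x\to 0^+$. Substituting into the recursion yields
\[
\p_1(T^\textnormal{cr}=n+1)=Q_n-Q_{n+1}=\tfrac{\sigma^2}{2}Q_n^2+o(Q_n^2),
\]
and combining with the estimate $Q_n=O(1/n)$ from Proposition~\ref{propo2} gives $\p_1(T^\textnormal{cr}=n)=O(1/n^2)$ as $n\to\infty$.

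The main obstacle is converting this asymptotic statement into a uniform bound valid for every $n\in\N$, but it is routine bookkeeping: one chooses $\delta>0$ so that the Taylor remainder is absolutely bounded by $x^2$ on $[0,\delta]$, picks $N_0$ with $Q_n\le\delta$ for $n\ge N_0$ (which exists since $Q_n\to 0$), which gives $Q_n-Q_{n+1}\le(\sigma^2/2+1)Q_n^2\le C_1/n^2$ for $n\ge N_0$ via Proposition~\ref{propo2}, and absorbs the finitely many small values using the trivial bound $\p_1(T^\textnormal{cr}=n)\le 1\le N_0^2/n^2$ for $n<N_0$. Enlarging $C$ to dominate both regimes closes the argument.
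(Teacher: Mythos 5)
Your proof is correct, and its skeleton matches the paper's: both arguments reduce to one initial particle via the inequality $\p_z(T^\textnormal{cr}=n)\le z\,\p_1(T^\textnormal{cr}=n)$ (you obtain it through the identity $\p_z(T^\textnormal{cr}\le n)=(1-Q_n)^z$ and the mean value theorem, the paper by the same superposition/union-bound observation), and then both need a bound $\p_1(T^\textnormal{cr}=n)=O(1/n^2)$. Where you differ is in how that one-particle estimate is obtained. The paper simply cites Corollary~I.9.1 of Athreya--Ney (with $s=0$), which gives the exact asymptotics $n^2\,\p_1(T^\textnormal{cr}=n+1)\to c$, and then absorbs the finitely many small $n$ into the constant. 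You instead rederive the weaker statement $\p_1(T^\textnormal{cr}=n)=O(1/n^2)$ from ingredients already present in the paper: the recursion $Q_{n+1}=1-f(1-Q_n)$, the Taylor expansion $f(1-x)=1-x+\tfrac{\sigma^2}{2}x^2+o(x^2)$ (valid since criticality and finite variance give $f'(1)=1$, $f''(1^-)=\sigma^2<\infty$), and Kolmogorov's estimate $Q_n=O(1/n)$ from Proposition~\ref{propo2}, so that $Q_n-Q_{n+1}\le(\tfrac{\sigma^2}{2}+1)Q_n^2\le C_1/n^2$ for large $n$, with the trivial bound handling small $n$. Your route is more self-contained (it needs only Proposition~\ref{propo2} rather than the sharper corollary in Athreya--Ney), at the cost of a slightly longer argument and of yielding only an upper bound rather than the precise asymptotics; the paper's citation buys brevity and the exact constant, which, however, is not needed for the proposition. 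Your uniformity bookkeeping (choice of $\delta$, $N_0$, and enlarging $C$) is exactly the right way to pass from the asymptotic to the claimed bound for all $n\in\N$ and $z\in\N$.
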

\begin{proof}
Due to Corollary I.9.1 in \cite{Athreya/Ney} (with $s=0$), 
there is a constant $c>0$ such that
\[
\lim_{n \to \infty} n^2 \p_1 \big( T^\text{cr} = n+1 \big) = c.
\]
Therefore, we get for $n \in \N$
\begin{align*}
\p_z \big( T^\text{cr} = n  \big) \leq z \, \p_1 
\big( T^\text{cr} = n \big) = z \frac{1}{(n-1)^2} 
\big( c + o(1) \big) \leq C  \frac{z}{n^2} 
\end{align*}
for a suitable constant $C > 0$.
\renewcommand{\qedsymbol}{$\square$}
\end{proof}

{\new
\begin{lem}\label{lemxx}
Let us consider a BRW $(Y_{\nu})_{\nu \in \mathcal{Y}}$ without cookies with parameters $\mu_0, \po, \qo$ (and start in 0 with one particle). If the BRW is transient to the right, we have for $n \in \N$
\begin{equation*}
\p\left(\exists \nu \in \mathcal{Y}:\ Y_{\nu} = -n \right) = \big(c + o(1)\big)(\varphi_\ell)^n
\end{equation*}
for some constant $c > 0$, where $\lim\limits_{n\to\infty} o(1) = 0$.
\end{lem}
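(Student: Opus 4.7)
The plan is to reduce the statement to the classical survival asymptotics for a subcritical Galton--Watson process (Proposition~\ref{propo1}), applied to the GWP $(\Lambda_n^-)_{n\in\N_0}$ introduced in~\eqref{deflam}. The first step is the a.s.\ identification
\[
\{\exists \nu \in \mathcal{Y}: Y_\nu = -n\} = \{\Lambda_n^- > 0\},
\]
which holds because any ancestral line ending at $-n$ contains a unique first visitor to $-n$. Next, by the branching property and translation invariance of the BRW, $(\Lambda_n^-)_n$ is itself a Galton--Watson process whose offspring distribution is that of $\Lambda_1^-$: each first visitor to $-1$ spawns an independent subtree in which, after translation, the first visitors to $-n$ are distributed as first visitors to $-(n-1)$ from a root at $0$. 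Its mean offspring is $\fim = \E[\Lambda_1^-]$ by~\eqref{deffi}, and in the transient-to-the-right regime ($p_0 > 1/2$) Proposition~\ref{prop1.11a} gives $\fim < 1$, so this GWP is strictly subcritical.

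The second step is to verify that $\var(\Lambda_1^-) \in (0, \infty)$, which is the hypothesis required by Proposition~\ref{propo1}. Positivity is immediate since $\Lambda_1^-$ is $\N_0$-valued with non-integer mean $\fim \in (0,1)$, hence not a.s.\ constant. For finiteness I would derive the functional equation
\[
\phi(s) = g_0\bigl(q_0 s + p_0\, \phi(\phi(s))\bigr),\qquad \phi(s) := \E[s^{\Lambda_1^-}],
\]
with $g_0(s) = \sum_k \mu_0(k) s^k$, by conditioning on the first branching of the root and, for each child sent to $+1$, decomposing its first visitors to $-1$ through its first visitors to $0$ (which, after translation, have the distribution of $\Lambda_1^-$). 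Since $g_0$ is a polynomial of degree at most $M$ by~\eqref{assm}, differentiating this equation twice at $s = 1$ and using $\phi(1) = 1$, $\phi'(1) = \fim$ yields a linear equation for $\phi''(1)$ whose coefficient is $1 - m_0 p_0\,\fim(1+\fim)$. A short manipulation using the defining relation $m_0 p_0 \fim^2 - \fim + m_0 q_0 = 0$ together with $p_0 > 1/2$ shows this coefficient is strictly positive, so $\phi''(1) < \infty$.

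With both steps in hand, Proposition~\ref{propo1} applied to $(\Lambda_n^-)_n$ gives
\[
\p\bigl(\Lambda_n^- > 0\bigr) = \bigl(c + o(1)\bigr)\, \fim^{\,n}
\]
for some $c > 0$, which is exactly the claim by the identification of Step~1. The main obstacle I anticipate is the bookkeeping behind the functional equation in Step~2: one has to justify carefully that the subtrees rooted at the various first visitors to $0$ from a particle at $+1$ are independent and identically distributed as the original subtree at $0$, so that their generating functions compose to $\phi\circ\phi$. Once this is in place, the sign check $1 - m_0 p_0 \fim(1+\fim) > 0$ and the appeal to Proposition~\ref{propo1} are routine.
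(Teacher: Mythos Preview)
Your approach is correct and essentially identical to the paper's: both reduce the claim to survival asymptotics for the subcritical GWP $(\Lambda_n^-)_{n\in\N_0}$ and invoke Proposition~\ref{propo1}. The paper handles the finite-second-moment check in one sentence, simply asserting that assumption~\eqref{assm} makes it ``not difficult to verify''; your functional-equation computation is a legitimate (and more explicit) way to carry this out, but it is more work than the paper intends here.
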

\begin{proof}
We consider the process $(\Lambda^-_n)_{n \in \N_0}$ introduced in \eqref{deflam} and observe that this process is a GWP with mean $\varphi_\ell < 1$ due to \eqref{prop1.11a.r2} and \eqref{prop1.11a.r2.2}. Using condition \eqref{assm}, it is not difficult to verify that we have $\E[(\Lambda^-_1)^2] < \infty$. Therefore, Proposition \ref{propo1} completes the proof.
\renewcommand{\qedsymbol}{$\square$}
\end{proof}
}

\section{Proofs of the main results}
{\new  We use both of the symbols $\blacksquare$ and $\square$ to signal the completion of a proof. The symbol $\blacksquare$ is used at the end of the proofs of the major results; whereas $\square$ is used for the proofs of auxiliary results which are part of another proof.}
\subsection*{Proof of Theorem~\ref{thm1}}
\textbf{Proof of part (a).}\\
In this part of the proof we suppose $\pc\mc>1$, i.e.\ 
the LP is supercritical. 
For $n\in\N$ we define inductively the $n$-th extinction time and 
the $n$-th rebirth time of the LP by
\begin{align*} 
\tau_n\,:=&\ \inf\big\{i>\sigma_{n-1}:\, |\lp{i}|=0\big\},\\
\sigma_n:=&\ \inf\big\{i>\tau_n:\, |\lp{i}|\geq1\big\}
\end{align*}
with $\sigma_0:=0$ and $\inf\,\emptyset:=\infty$. Since $p_{0}>1/2$ and  the LP is supercritical we have that $\p(\sigma_{n}<\infty\mid \tau_{n}<\infty)=1$ and $\p(\tau_{n+1}=\infty\mid \tau_{n}<\infty)\geq \p(\tau_{1}=\infty)>0$ for all $n\geq 0$. Hence, 
we a.s.\ have
\begin{equation} \label{lem5.2.1}
\sigma^*:=\inf\{n\in\N_0:|\lp{i}|\ge1\ \forall\ i\ge n  \}<\infty.
\end{equation}
It is a well-known fact that conditioned on survival a supercritical GWP with finite second moment normalized by its mean converges to a strictly positive random variable (e.g. see Theorem~I.6.2 in~\cite{Athreya/Ney}). Considering the LP {\new separately} on the events $\{\sigma^{*}=k\}$ for $k\in \N_{0}$ yields
\begin{equation} \label{lem5.2.2}
\lim_{n\to \infty} \frac{|\lp{n}|}{(\pc\mc)^n} = W > 0
\end{equation}
for a strictly positive random variable $W$.  

 Now, we prove part~(i) of Theorem~\ref{thm1}(a).
Suppose that $\pc\mc\fim \geq 1$. 
For $n \in \N_0$, let us introduce
\[
L_n:=\left\{\nu\in Z_{n+1}(l(n)-1): \nu\succ \lp{n}\right\}.
\]
The set $L_n$ contains all particles that are produced in the LP at time $n$ and then leave the LP to the left. Thus  they are located at the position $l(n)-1$ at time $n+1$. Define the {\new events}
$
A_n:=\big\{ \exists\ \nu\succeq L_n: X_\nu=0 \big\}
$
for $n \in \N_0$.  In   order to show strong recurrence of the CBRW it is now sufficient {\new to prove} that 
\begin{equation}
\label{thm1.0}
 \p\Big(\limsup_{n\to\infty} A_n\Big)=1.
\end{equation}
As a first step to achieve this, we consider the events $
B_n:=\left\{ |L_n|\geq \big(\pc\mc)^n n^{-1},\ n\geq\sigma^* \right\}
$
for $n \in \N_0$ and show that
\begin{equation}
\label{thm1.1a}
\p\left(\liminf_{n\to\infty}B_n\right)=1.
\end{equation}
This provides a lower bound for the growth of $|L_n|$ for large~$n$. 
To see that~\eqref{thm1.1a} holds, 
we define $C_n:=\big\{ |\lp{n}|\geq (\pc\mc)^n n^{-1/2}\big\}$ 
and notice that due to \eqref{lem5.2.2} we have
\begin{equation}\label{thm1.1b}
\p\left(\liminf_{n\to\infty} C_n \right)=1.
\end{equation}
We observe that, given the event $C_n$, the random variable $|L_n|$ 
can be bounded from below by a random sum of 
$\lceil(\pc\mc)^n n^{-1/2}\rceil$ i.i.d.\ 
Bernoulli random variables with success probability~$\qc$. Hence, we can use a standard large deviation bound to see that $ \p( |L_n|< (\pc\mc)^n n^{-1}\ \mid\ C_n)$ decays exponentially in $n$. An application of the  Borel-Cantelli lemma now yields 
\begin{equation} 
\label{thm1.1b2.1}
\p\left(\limsup_{n\to\infty} \Big(\big\{ |L_n|< (\pc\mc)^n n^{-1}
 \big\}\cap C_n\Big) \right)=0.
\end{equation}
Since $\sigma^*< \infty$ a.s., 
\eqref{thm1.1b2.1} together with~\eqref{thm1.1b} yields~\eqref{thm1.1a}. 

Observe that on $\{n \ge \sigma^*\}$ the {\new number} of {\new  descendants } of every particle in $L_n$ which ever {\new reaches the position} $1,2,\ldots$ steps to the left for the first time in their genealogy constitutes an embedded GWP in the CBRW. Its mean is given by $\fim$, where $\fim < 1$ holds since the BRW without cookie is transient to the right (cf.\ {\new \eqref{prop1.11a.r2} and \eqref{prop1.11a.r2.2}}). 
 Using  {\new Lemma \ref{lemxx}} we therefore get
\begin{align}
\label{conj2}
 \p\big(\,A_n \mid  {\new B_n}\, \big)
\ge&\ 1-\left( 1- c(\fim)^n   
\right)^{(\pc\mc)^n n^{-1}}\nonumber 
\\
\ge&\ 1-\exp\left(- c (\fim)^n  
 (\pc\mc)^n n^{-1}\right)\nonumber 
\\
\ge&\ 1-\exp\left(- \tfrac{c}{n}\right)\nonumber\\
\ge&\ \tfrac{C}{n} 
\end{align}
for some $c, C>0$. Here we use that the position of 
a particle $\nu\in L_n$ is bounded by~$n$ (in fact by $n-1$).
Notice also that we have $\pc\mc \fim\ge1$ by assumption.
Since $\mathds{1}_{B_n}$ is measurable with respect to the $\sigma$-algebra 
generated by $|L_n|$ and $\sigma^*$, we have for $i,j\in\N$ with $i<j$
\begin{align}
 \p\left(\bigcap_{n=i}^j \big(A_n^{\textnormal{c}}\cap B_n\big) \right)
=\ &\E  \left[  \E\left[\left. \prod_{n=i}^j  
\mathds{1}_{A_n^{\textnormal{c}}\cap B_n}\ \right|\ |L_i|,\ldots,|L_j|,\ 
\sigma^*\right] \right]\displaybreak[0]\nonumber\\
=\ &\E  \left[  \Big(\prod_{n=i}^j \mathds{1}_{B_n}\Big)
\mathds{1}_{\{i\ge\sigma^*\}}\E\left[\left.\prod_{n=i}^j 
\mathds{1}_{A_n^{\textnormal{c}}}\ \right|\ |L_i|,\ldots,|L_j|,\ 
\sigma^*\right] \right] \nonumber\\
=\ & {\new \E  \left[  \prod_{n=i}^j \mathds{1}_{B_n} \E\left[\left. 
\mathds{1}_{A_n^{\textnormal{c}}}\ \right|\ |L_n|,\ \sigma^*\right] \right].} \label{conj2.2}
\end{align}
For the last step, we observe that on $\{i\geq\sigma^*\}$ 
the random variables $\big(\mathds{1}_{A_n^{\textnormal{c}}}\big)_{i\le n\le j}$
 are conditionally independent given $|L_i|,\ldots,|L_j|$ and $\sigma^*$. 
This holds because on $\{i\geq\sigma^*\}$ all the particles in 
$\bigcup_{n=i}^jL_n$ start independent BRWs which cannot reach the cookies 
anymore.
For the same reason on $\{i\geq\sigma^*\}$ each of the random variables 
$(\mathds{1}_{A_n^{\textnormal{c}}})_{i\le n}$ is conditionally 
independent of $(|L_k|)_{k\not=n}$ given $|L_n|$ and $\sigma^*$. 
With the help of~\eqref{conj2} and~\eqref{conj2.2} we can now conclude 
that we have
\begin{align}\label{conj3}
  \p\left(\bigcap_{n=i}^j \big(A_n^{\textnormal{c}}\cap B_n\big) 
\right)
 \le\ &{\new \prod_{n=i}^j  \left(1- \tfrac{C}n \right) \xrightarrow[j\to\infty]{}0.}
\end{align}
Therefore, for all $i\in\N$ we have 
$\p\left(\cap_{n=i}^\infty \big(A_n^{\textnormal{c}}\cap B_n\big) \right)=0$,
which implies
\begin{equation}
\label{conj3.1}
\p\left(\liminf_{n\to\infty} \big(A_n^{\textnormal{c}}\cap B_n\big) \right)=0.
\end{equation}
Since \eqref{thm1.1a} holds, \eqref{conj3.1} yields
$\p\left(  \liminf_{n\to\infty} A_n^{\textnormal{c}}\right)=0$.
Thus, we have established~\eqref{thm1.0} and so (i) of Theorem~\ref{thm1}(a) is proven.

\medskip

Next, we prove part~(ii) of Theorem~\ref{thm1}(a).
Suppose that $\pc\mc \fim < 1$. For sake of simplicity we assume $\sigma^{*}=0$. The proof is analogous for $\sigma^{*}=k$ for $k\in \N$. The idea of the proof is to show that the expected number of particles that visit the origin the second time (the first time after time 0) in their genealogy is finite. Since the BRW without cookies is transient this implies transience of the CBRW. We note that no descendant of a particle that visited the origin after time $0$ can ever reach a cookie again since $\sigma^{*}=0$. (In the case  $\sigma^{*}=k$ only a finite number of particles that have visited the origin up to time $k$ can have descendants which reach a cookie again.) More formally, define
\[ 
\Gamma_{n}={\new \left|\left\{\xi\in \mathcal{Z}:\ \xi \succeq L_n,\ X_{\xi}=0,\, X_{\omega} \neq 0\,  \forall \omega:\, \xi \succ \omega\succeq L_n    \right\}\right|}.
\]
Taking expectation yields
\[
\E[\Gamma_{n} \I_{\{\sigma^*=0\}}]= \E[{\new |L_{n}|}\I_{\{\sigma^*=0\}}] F(n,0|m_{0})\le (\pc \mc)^{n}\qc \mc(\fim)^{n},
\]
and thus we have $\E[\sum_{n} \Gamma_{n} \I_{\{\sigma^*=0\}}]<\infty$ since $\pc \mc \fim <1$.
Therefore, we can finally conclude that a.s.\ only finitely many 
particles visit the origin, i.e.\ the CBRW is transient. 
This completes the proof of part (a). $\hfill\blacksquare$

\medskip
\noindent
\textbf{Proof of part (b)}\\
In this part of the proof we suppose that the LP is subcritical or critical,  i.e.\ that $\pc\mc\leq1$. We start with Lemma~\ref{crclem1}, which states that except for finitely many times the particles at a single position $x\in\Z$ produce an amount of offspring which is close to the expected amount as long as there are many particles at this position. To do so, we first split the set of particles $Z_n(x)$ into the following two sets
\begin{align*}
Z_{n+1}^+(x)&:=\{\nu \in Z_{n+1}(x):\, \nu \succ Z_n(x-1)\},\\
Z_{n+1}^-(x)&:=\{\nu \in Z_{n+1}(x):\, \nu \succ Z_n(x+1)\}
\end{align*}
containing the particles which have moved to the right or to the 
left from time $n$ to time $n+1$. For $\varepsilon>0$, 
which we specify later (cf.\ \eqref{crc7} and~\eqref{crcchoicee2}), 
we  introduce the following sets:
\begin{align} \label{crc1a}
D^+_n(x) &:= \{x < l(n),\, |Z_n(x)|\geq n\} \cap 
\left(\left\{ \frac{|Z_{n+1}^+(x+1)|}{|Z_n(x)|} < (\po\mo-\varepsilon) 
\right\}\right.\nonumber\\
&\hspace{5cm}\left.\cup \left\{ (\po\mo+\varepsilon) 
< \frac{|Z_{n+1}^+(x+1)|}{|Z_n(x)|}  \right\} \right) ,
\nonumber \allowdisplaybreaks[1]\\
D^-_n(x) &:= \{x < l(n),\, |Z_n(x)|\geq n\} \cap 
\left(\left\{ \frac{|Z_{n+1}^-(x-1)|}{|Z_n(x)|} 
< (\qo\mo-\varepsilon) \right\}\right.\nonumber\\
&\hspace{5cm}\left.\cup \left\{ (\qo\mo+\varepsilon) 
< \frac{|Z_{n+1}^-(x-1)|}{|Z_n(x)|}  \right\} \right) ,
\nonumber \allowdisplaybreaks[1]\\
E^+_n &:= \{\lp{n}\geq n\} \cap \left(\left\{ \frac{|\lp{n+1}|}{|\lp{n}|} 
< (\pc\mc-\varepsilon) \right\}\cup \left\{ (\pc\mc+\varepsilon) 
< \frac{|\lp{n+1}|}{|\lp{n}|}  \right\} \right) ,
\nonumber \allowdisplaybreaks[1]\\
E^-_n &:= \{\lp{n}\geq n\} \cap 
\left(\left\{ \frac{|Z_{n+1}^-(l(n)-1)|}{|\lp{n}|} 
< (\qc\mc-\varepsilon) \right\}\cup \left\{ (\qc\mc+\varepsilon) 
< \frac{|Z_{n+1}^-(l(n)-1)|}{|\lp{n}|}  \right\} \right) ,\nonumber \\
F_n&:=  E^+_n \cup E^-_n \cup \bigcup\limits_{x\in\Z} 
\Big(D^+_n(x) \cup D^-_n(x)\Big).
\end{align}

\begin{lem} \label{crclem1}
{\new For all $\varepsilon > 0$}, we have
\begin{equation} \label{crc1}
\p \left(\limsup_{n\to\infty} F_n \right) = 0.
\end{equation}
\end{lem}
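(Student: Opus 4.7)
The plan is to show that each of the seven families of events $\{D_n^{\pm}(x)\}_x$, $E_n^{\pm}$ has probability summable over $n$, uniformly in $x$, and then apply the Borel--Cantelli lemma to the union $F_n$.

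First I would work on $D_n^+(x)$. Let $\mathcal F_n$ denote the $\sigma$--algebra generated by the CBRW up to time $n$. On the event $\{x<l(n)\}$ there is no cookie at $x$, so every particle $\nu\in Z_n(x)$ produces an independent $\mu_0$--offspring number and each offspring then moves to $x+1$ with probability $\po$, independently of everything else. Hence, conditionally on $\mathcal F_n$ and on $\{x<l(n),\,|Z_n(x)|=k\}$, the random variable $|Z_{n+1}^+(x+1)|$ equals $\sum_{i=1}^{k}B_i$, where the $B_i$ are i.i.d., take values in $\{0,1,\dots,M\}$ (by assumption \eqref{assm}), and satisfy $\E B_i=\po\mo$. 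Hoeffding's inequality therefore yields
\[
\p\bigl(D_n^+(x)\,\big|\,\mathcal F_n\bigr)\;\le\;2\exp\!\Bigl(-\tfrac{2\varepsilon^{2}}{M^{2}}\,|Z_n(x)|\Bigr)\I_{\{x<l(n),\,|Z_n(x)|\ge n\}}\;\le\;2\exp\!\Bigl(-\tfrac{2\varepsilon^{2}}{M^{2}}\,n\Bigr).
\]
The same argument, with $\qo$ in place of $\po$, treats $D_n^-(x)$, and the argument applied to $\lp n$ (where every particle reproduces according to $\mu_c$ and moves with parameters $\pc,\qc$) treats $E_n^{\pm}$ with the analogous bound $2\exp(-c\varepsilon^{2}n)$ for a suitable $c>0$ depending only on $M$.

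Next I would take a union bound over $x$. At time $n$ all particles are contained in $[-n,n]$, so the set $\{x\in\Z:|Z_n(x)|\ge n\}$ has at most $2n+1$ elements, and only those $x$ contribute to $\bigcup_x(D_n^+(x)\cup D_n^-(x))$. Combining this with the bound from the previous paragraph,
\[
\p(F_n)\;\le\;(4n+6)\cdot 2\exp\!\bigl(-c\varepsilon^{2}n\bigr),
\]
which is summable in $n$. The Borel--Cantelli lemma then gives $\p(\limsup_n F_n)=0$.

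The only subtlety I anticipate is the conditioning step: one must verify that, given $\mathcal F_n$, the contributions counted by $Z_{n+1}^+(x+1)$ (respectively by $|\lp{n+1}|$) are genuinely sums of i.i.d.\ bounded variables with the claimed mean. This follows directly from the two--step description of the dynamics in the introduction — first branching, then an independent nearest--neighbour step — together with the fact that the presence of a cookie at $x$ is $\mathcal F_n$--measurable through $l(n)$. Everything else is a routine application of a Chernoff/Hoeffding bound combined with the linear--in--$n$ bound on the support of $Z_n$.
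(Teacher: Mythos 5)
Your proof is correct and follows essentially the same route as the paper: a Chernoff/Hoeffding-type large deviation bound for the conditional sum of i.i.d.\ bounded offspring contributions (with means $\po\mo$, $\qo\mo$, $\pc\mc$, $\qc\mc$), a union bound over the at most linearly many occupied positions at time $n$, and the Borel--Cantelli lemma. The only differences are cosmetic: you condition on the full $\sigma$-algebra $\mathcal{F}_n$ instead of $\sigma(|Z_n(x)|)$ and use $2n+1$ positions where the paper uses the parity-based count $n+1$, neither of which changes the argument.
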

\begin{proof}[Proof of Lemma~\ref{crclem1}]
A large deviation estimate (note that the number of 
offspring of a single particle is bounded by~$M$) 
for the random sum $|Z_{n+1}^+(x+1)|$ of $|Z_n(x)|$ 
i.i.d.\ random variables with mean $\po\mo$ yields
\begin{equation} \label{crc2}
\p\Big(|Z_{n+1}^+(x+1)| >(\po\mo+\varepsilon) |Z_n(x)| 
\Big|\sigma(|Z_n(x)|)\Big) \leq \exp \big(-|Z_n(x)| C_1 \big)
\end{equation}
for some constant $C_1>0$ and 
\begin{equation} \label{crc3}
\p\Big(|Z_{n+1}^+(x+1)| < (\po\mo-\varepsilon) |Z_n(x)| 
\Big|\sigma(|Z_n(x)|)\Big) \leq \exp \big(-|Z_n(x)|C_2 \big)
\end{equation}
for some constant $C_2>0$. From~\eqref{crc2} and~\eqref{crc3} we can conclude
\begin{equation} \label{crc4}
\p\Big(D^+_n(x)\Big)\leq \exp(-n C_1) + \exp(-nC_2).
\end{equation}
The same argument leads to analogue estimates for the 
sets $D^-_n(x)$, $E^+_n$ and $E^-_n$ with constants 
$C_i>0$ for $i=3,\ldots,8$. Since at time $n \in \N_0$ particles can only
 be located at the $n+1$ positions $-n, -n+2,\ldots,n-2,n$, we get
\[
\p \left( {\new F_n } \right) \leq 2(2 + 2 (n+1))\exp(-n C)
\]
for $C:=\min\limits_{i=1,\ldots,8} C_i >0$. 
Therefore, the Borel-Cantelli lemma implies~\eqref{crc1}.
\renewcommand{\qedsymbol}{$\square$}
\end{proof}

In the considered case the CBRW behaves very differently 
depending on whether we have $\po\mo\le1$ or $\po\mo>1$:
\begin{itemize}
\item[(i)] For $\po\mo\le1$ the offspring of a single particle 
which move to the right in every step behave as a critical or 
subcritical GWP as long as the particles do not reach the cookies. 
Therefore, we can expect that the amount of particles which reach a 
cookie at the same time is not very large. More precisely, 
we will show in Proposition~\ref{crccor0} that the amount of particles 
in the LP does not grow exponentially.
\item[(ii)] For  $\po\mo>1$ the amount of offspring which moves
 to the right in every time step in the corresponding BRW without 
cookies constitutes a supercritical GWP. Therefore, the number of 
particles at the rightmost occupied position in the 
BRW without cookies a.s.\ grows with exponential rate $\po\mo~>~1$. 
In this case the following proposition shows that the amount 
of particles in the LP is essentially bounded by the growth 
rate of the rightmost occupied position of the corresponding 
BRW without cookies.
\end{itemize}
\begin{prop} 
\label{crccor0}
For every $\alpha > \max\{1, \po\mo\}=:\mm$ we have
\begin{equation} \label{crccor1}
\p\left(\liminf_{n \to \infty} \{|\lp{n}| < \alpha^n\} \right)=1.
\end{equation}
\end{prop}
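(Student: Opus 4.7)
My plan is to combine the deterministic bounds from Lemma~\ref{crclem1} with a joint induction on $n$. By Lemma~\ref{crclem1} and Borel--Cantelli, almost surely there exists $n_0 = n_0(\omega)$ such that $F_n$ fails for all $n \ge n_0$. I choose $\varepsilon > 0$ with $\pc\mc + \varepsilon < \alpha$ and $\po\mo + \varepsilon < \alpha$, which is possible since $\pc\mc \le 1 \le \mm < \alpha$ and $\po\mo \le \mm < \alpha$.

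The key one-step recursion is the following: $|\lp{n+1}|$ equals the number of right-moving offspring of $\lp{n}$ under $(\mu_c, \pc)$ if $|\lp{n}| > 0$, and otherwise (the LP has just died and can only be repopulated from the left) the number of right-moving offspring of $Z_n(l(n)-1)$ under $(\mu_0, \po)$. Off $F_n$, the first case yields $|\lp{n+1}| \le (\pc\mc + \varepsilon)|\lp{n}| < \alpha|\lp{n}|$ whenever $|\lp{n}| \ge n$, and otherwise $|\lp{n+1}| \le Mn$, which is below $\alpha^{n+1}$ for $n$ large since $\alpha > 1$. Symmetrically, in the revival case $|\lp{n+1}| \le \alpha|Z_n(l(n)-1)|$ or $|\lp{n+1}| \le Mn$. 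Provided one can also bound $|Z_n(l(n)-1)| \le \alpha^n$, the induction step $|\lp{n+1}| \le \alpha^{n+1}$ follows immediately.

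For the auxiliary bound on $|Z_n(l(n)-1)|$, the analogous one-step recursion off the events $D^+_n$ has growth factor $\po\mo + \varepsilon < \alpha$: during intervals in which the LP is dead, the region $\{x < l(n)\}$ evolves as a genuine BRW without cookies whose right-frontier grows at rate $\po\mo$. The main obstacle is that when the LP is alive, $|Z_{n+1}(l(n+1) - 1)|$ may also receive a contribution from left-moving LP offspring (bounded by $(\qc\mc + \varepsilon)|\lp{n}|$ off $F_n$), so one has to carry the induction jointly for $|\lp{n}|$ and $|Z_n(l(n) - 1)|$ (and, if needed, for $|Z_n(l(n) - k)|$ for a finite range of $k$). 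An alternative, cleaner route is to dominate $|\lp{n}|$ by the size of the right-frontier of an auxiliary BRW without cookies whose growth rate is $\max\{1, \po\mo\} = \mm$; combining this with the explicit control at the LP (which contributes a factor of at most $\pc\mc \le 1$ per step) yields the claimed estimate $|\lp{n}| < \alpha^n$ for all large $n$.
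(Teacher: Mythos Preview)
Your one-step recursion for $|\lp{n}|$ when the LP is alive is correct: then $|\lp{n+1}|$ is (essentially) $\pc\mc\,|\lp{n}|$, and since $\pc\mc\le 1<\alpha$ this part is fine. The genuine gap is in the revival step, which is precisely the heart of the proposition.

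When the LP has been dead for $K$ consecutive steps, the size of the revived LP is governed not just by $|Z_n(l(n)-1)|$ one step earlier, but by the entire configuration along the space--time diagonal $\{(n-k,\,l(n)-k):0\le k\le K\}$ and beyond: particles that were far to the left of $l(n)$ at earlier times can move right repeatedly and feed into the revival. Your proposed joint induction on $|\lp{n}|$ and $|Z_n(l(n)-1)|$ (or on a finite range $|Z_n(l(n)-k)|$, $k\le K_0$) does not close. First, when the LP is alive, $|Z_{n+1}(l(n+1)-1)|$ receives a contribution $\approx \qc\mc\,|\lp{n}|+\po\mo\,|Z_n(l(n)-1)|$, and $\qc\mc+\po\mo$ is in general larger than any $\alpha$ close to $\mm$, so the naive two-variable induction step fails. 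Second, during dead periods the recursion cascades to $|Z_n(l(n)-k)|$ for unbounded $k$, since dead periods can be arbitrarily long; a fixed finite range $K_0$ cannot suffice. Your ``alternative route'' via domination by an auxiliary BRW is too vague to repair this: the cookie dynamics mean there is no obvious dominating BRW whose right-frontier particle count has growth rate exactly $\mm$.

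The paper's proof confronts this difficulty directly and introduces a key ingredient absent from your sketch: Lemma~\ref{crclem0.2}, which says that (off a negligible event) a LP of size $z$ survives for at least $\beta\log z$ further steps. Combined with the backward diagonal bound~\eqref{crccalc3} and the forward LP bound~\eqref{crccalc6}, this is used in Lemma~\ref{crclem17} to control the size of the LP \emph{at each rebirth time} $\sigma_{j+1}$ in terms of $|\lp{\sigma_j}|$ and $\sigma_{j+1}$. The argument tracks two auxiliary processes $\Phi_n,\Psi_n$ along space--time diagonals (Figures~\ref{pic1} and~\ref{pic2}) and distinguishes whether the rebirth happens exactly two steps after extinction or later. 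The logarithmic survival lower bound is what converts the additive feedback from the left (which naively yields the bad factor $\qc\mc+\po\mo$) into a multiplicative correction of order $(1+\delta)^{\sigma_j}$ that can be absorbed because $\tau_j\ge\beta\log|\lp{\sigma_j}|$ forces $\sigma_j\lesssim\tau_j/(\beta\log(\mm+\gamma))$; cf.\ the choice~\eqref{crcdelta}. This mechanism is essential and has no counterpart in your proposal.
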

\begin{proof}[Proof of Proposition~\ref{crccor0}]
For the proof we start with the following lemma which states that a 
large LP at time $n$ leads to a long survival of the LP afterwards 
(except for finitely many times). For $\beta > 0$ we define
\begin{align}
& G_n:= G_n(\beta):=  \left\{|\lp{n}|\ge n,\, \tau(n)\le \beta  \log 
|\lp{n}|\right\}, \label{crc5a}
\intertext{where}
& \tau(n):=  \inf\{\ell\geq n:\, |\lp{\ell}|=0\} \label{crcdeftaun}
\end{align}
denotes the time of the next extinction of the LP beginning from time $n$.
\begin{lem} 
\label{crclem0.2}
There exists  some $\beta > 0$ such {\new that} we have
\begin{equation} \label{crc6}
\p\left( \limsup_{n \to\infty} G_n \right) = 0.
\end{equation}
\end{lem}
\begin{proof}[Proof of Lemma \ref{crclem0.2}]
Let us first look at a subcritical GWP 
$\big(GW_n^{\textnormal{sub}}\big)_{n \in \N_0}$ 
with reproduction mean $\pc\mc<1$ and strictly positive,
finite offspring variance and its extinction time $T^{\textnormal{sub}}$.
Assuming that we have an initial population of $z \in \N$ particles, 
we get using Proposition~\ref{propo1}
\[ \p_z(T^{\textnormal{sub}}\le n) =\big(1-\p(GW_n^{\textnormal{sub}}>0)\big)^z \leq \big(1 - c(\pc\mc)^n\big)^z \leq {\new \exp\big(- c(\pc\mc)^n z \big)},\] 
{\new for a suitable constant $c>0$. In particular, if the LP is subcritical, we conclude that}
\begin{align*}
& {\new  \p\big(|\lp{n}|\ge n,\, \tau(n)\le \beta  \log |\lp{n}|\, \big|\, |\lp{n}| \big) \le 
\exp\left(- c(\pc\mc)^{\beta  \log |\lp{n}|} |\lp{n}|  \right) \cdot \mathds{1}_{\{|\lp{n}|\ge n\}}} \\
{\new \le}\ & {\new  \exp\left(- c \cdot n^{\frac12}  \right)}
\end{align*}
{\new for $0 < \beta$ small enough and all $n \in \N$. Therefore, the Borel-Cantelli lemma implies~\eqref{crc6}. In the case of a critical LP, we can use an analogous argument together with Proposition~\ref{propo3}.}
\renewcommand{\qedsymbol}{$\square$}
\end{proof}

In the following we want to investigate the behaviour of the CBRW on the event
\begin{equation} \label{crcdefhno}
H_{n_0}:=\bigcap\limits_{n \geq n_0} \big(F_n^c\cap G_n^c\big)
\end{equation} 
for fixed $n_0 \in \N_0$. {\new (We will later choose $n_0$ large enough such that the assumptions of the upcoming Lemma~\ref{crclem17} and equation~\eqref{crcpropgam} are satisfied.)} On this event we have upper and lower bounds for 
\[
\frac{|Z_{n+1}^+(x+1)|}{|Z_n(x)|} \quad \text{and} 
   \quad \frac{|Z_{n+1}^-(x-1)|}{|Z_n(x)|}
\]
for positions $x \in \Z$ containing at least~$n$
 particles at time $n\ge n_0$ (cf.~\eqref{crc1a}). 
Additionally, we have a lower bound for the time for which a LP with at 
least~$n$ particles at time $n \ge n_0$ will stay alive afterwards 
(cf.~\eqref{crc5a}). We note that we have
\begin{equation} \label{crcprophn}
 \lim_{n \to \infty} \p(H_n) =  \p\left( \liminf_{n \to \infty} \big(F_n^c\cap G_n^c\big) \right)
=1
\end{equation}
due to Lemma \ref{crclem1} and Lemma \ref{crclem0.2}.
\\[10pt]
For the next lemma we need some additional notation. We define
\[
\sigma_0:= \inf \{n > n_0:\, |\lp{n-1}|=0,\, |\lp{n}| 
\neq 0,\, l(n)\leq n - 2n_0 - 1\},
\]
which is the time of the first rebirth of the LP after time~$n_0$ for 
which we have
\[
l(\sigma_0) - (\sigma_0 - n_0) \le - 2n_0 - 1 + n_0 = -n_0 - 1.
\]
This implies
\begin{equation} \label{crc7.1}
{\new \left|Z_{n_0}\big(l(\sigma_0) - (\sigma_0 - n_0 + k)\big)\right|}=0,
\end{equation}
{\new for all $k \in \N_0$} which is an important fact which we make use of in the 
following calculations (cf.\ Figure~\ref{pic1}).
Since the LP is critical or subcritical and the BRW without 
cookies is transient to the right, we a.s.\ have $\sigma_0< \infty$. 
We now define the random times 
\begin{align*}
\tau_n&:=\inf \{\ell >\sigma_{n}:\ |\lp{\ell}|=0 \}-\sigma_n ,
~\text{for }n \ge0,\\
\sigma_n&:= \inf \{\ell > \sigma_{n-1} + \tau_{n-1}:\ |\lp{\ell}| \neq 0 \} ,
~\text{for }n \ge 1,
\end{align*}
which denote the time period of survival and the time of {\new the restart}
of the LP, inductively. 
Due to the assumptions of the CBRW all of these random times are a.s.\ finite. 
Using  \eqref{crc7.1} we see that we have
\begin{equation}
\label{crc7.1a}
{\new \left|Z_{n_0}\big(l(\sigma_j) - (\sigma_j - n_0+k)\big)\right|}=0
\end{equation}
for all $j,k\in\N_0$. {\new We note that the argument in \eqref{crc7.1} is true for all $n \le n_0$ instead of $n_0$. Therefore we can conclude that}
\begin{equation}
\label{crc7.1a.x}
{\new {\new \left|Z_{n}\big(l(\sigma_j) - (\sigma_j - n+k)\big)\right|}=0}
\end{equation}
{\new holds for all $n \le n_0$ and for all $j,k\in\N_0$.}

As the next step of the proof, we state the following upper bounds 
for the size of the {\new LP on the event $H_{n_0}$}:
\begin{lem} \label{crclem17}
{\new For the particles in the LP we have the following upper bound for $k, n, z \in \N$, $n \ge n_0$:
\begin{alignat}{2}
 |\lp{n+k}| \le  z  (\pc\mc+\varepsilon)^k + k  M  (n+k-1)
 (1+\delta)^{k-1}~\text{on }H_{n_0} \cap \{|\lp{n}|=z\} \cap\{\tau(n)\ge n+k\}.
\label{crccalc6}
\end{alignat}}
{\new Further, for arbitrary $\gamma>0$, there exists $n^*=n^*(\gamma)$ such that}
\begin{align}
&|\lp{\sigma_{j+1}}| \le\, (\mm+3\gamma)^{\sigma_{j+1}}
&&\text{on } H_{n_0} \cap \{\sigma_{j+1} = \sigma_j + \tau_j + 2\}
\cap \{|\lp{\sigma_j}| \le (\mm+\gamma)^{\sigma_{j}}\}, \label{crcest1}\\
&|\lp{\sigma_{j+1}}| \le\, |\lp{\sigma_{j}}|  (\mm+4\gamma)^{\tau_j} 
&&\text{on } H_{n_0} \cap \{\sigma_{j+1} = \sigma_j + \tau_j + 2\} 
\cap \{|\lp{\sigma_j}| > (\mm+\gamma)^{\sigma_{j}}\}, \label{crcest2}\\\
&|\lp{\sigma_{j+1}}| \le\, (\mm+2\gamma)^{\sigma_{j+1}}&&\text{on } H_{n_0} \cap \{\sigma_{j+1} > \sigma_j + \tau_j + 2\}, \label{crcest3}
\end{align}
{\new holds for all $j \in \N_0$ and $n_0 \ge n^*$}, where $\mm=\max\{1,\po\mo\}$.
\end{lem}

\begin{proof}[Proof of Lemma~\ref{crclem17}]
First we choose $0<\delta<\gamma$ in such a way that
\begin{align}
1+\delta \le \,  \frac{\mm + 2\gamma}{\mm+\gamma}, \quad
1+\delta \le \,  \left(\frac{\mm + 3\gamma}{\mm+2\gamma}\right)^{\beta \log(\mm+\gamma)}, \label{crcdelta}
\end{align}
where $\beta>0$ satisfies Lemma \ref{crclem0.2}. 
Then we choose $\varepsilon>0$ for the definitions of the sets 
$\big(F_n\big)_{n \in \N_0}$ (cf.\ \eqref{crc1a}) sufficiently small  such that
\begin{align}
\hspace{1pt}\pc\mc+\varepsilon \le \,   1+\delta, \quad
{\new 1 < }\, \frac{\po\mo+\varepsilon}{\po\mo-\varepsilon}\le \,  1 +\delta, \quad
\po\mo+\varepsilon \le \,  \mm + \gamma. \label{crc7} 
\end{align}
For the upcoming estimates we use the following properties of the 
set $H_{n_0}$. For $n > n_0$ we have
\begin{equation}
\label{crccalc1}
|Z_{n-1}(x-1)| \le n-1
~\text{on }H_{n_0} \cap \{|Z_n(x)|=0\},
\end{equation}
which means that there cannot be very many 
particles at position $x-1$ one time step before $n$ if we know that 
the position $x$ stays empty at time~$n$. 
Similarly, the knowledge of $|Z_n(x)|$ gives us upper estimates 
for $(|Z_{n-k}(x-k)|)_{k \in \N}$.
 If we are in the case in which the cookies are always to the right of 
the considered positions, we have for $n>n_0$
\begin{alignat}{2}
&|Z_{n-1}(x-1)| \le  z  (\po\mo - \varepsilon)^{-1} + n-1
\qquad \text{on }H_{n_0} \cap \{|Z_n(x)|=z,\ l(n-1) > (x-1)\},\nonumber\\
&{\new |Z_{n-k}(x-k)| \le  z  (\po\mo - \varepsilon)^{-k} + (n-1)
 \big(1 \vee (\po\mo-\varepsilon)^{-k+1} \big)} \nonumber\\
& \hphantom{|Z_{n-1}(x-1)| \le  z  (\po\mo - \varepsilon)^{-1} + n-1} \qquad~\text{on }H_{n_0} \cap \{|Z_n(x)|=z,\ l(n-1) > (x-1)\}\label{crccalc3}
\end{alignat}
for $n-k \ge n_0$. The first estimate is easily obtained using a proof by contradiction and an iteration of it yields the second inequality. {\new We note here that by construction the upper bound is at least equal to $n_0$. Therefore, if the upper bound is exceeded, at least a ratio of $(\po\mo - \varepsilon)$ of $|Z_{n-k}(x-k)|$ will contribute to $|Z_{n-k+1}(x-k+1)|$ on the considered event due to the definition of $H_{n_0}$. This gives the contradiction.}\\
{\new For $n \ge n_0$ and $k\in \N$}, we obtain similar estimates for the size of the LP before the next extinction at time $\tau(n)$ (for the definition of $\tau(n)$ 
see~\eqref{crcdeftaun}):
\begin{alignat}{2}
&|\lp{n+1}| \le  z  (\pc\mc+\varepsilon) + M  n 
 ~\text{on }H_{n_0} \cap \{{\new|\lp{n}|}=z\},\nonumber\\
&|\lp{n+2}| \le  z  (\pc\mc+\varepsilon)^2 + 2  M  (n+1) 
 {\new \big(1 \vee (\pc\mc+\varepsilon)\big)} ~\text{on }H_{n_0} \cap \{|\lp{n}|=z\} \cap\{\tau(n)\ge n+2\},
\nonumber\\
 &|\lp{n+k}| \le  z  (\pc\mc+\varepsilon)^k + k  M  (n+k-1)
 (1+\delta)^{k-1}~\text{on }H_{n_0} \cap \{|\lp{n}|=z\} \cap\{\tau(n)\ge n+k\}.
\nonumber
\end{alignat}
{\new For the last upper bounds, we note that we can distinguish between the following two cases: If $|\lp{n+k}| \le n + k - 1$, then we have $|\lp{n+k+1}| \le M(n + k - 1)$ due to assumption \eqref{assm}. Otherwise we can use the definition of $H_{n_0}$ to get $|\lp{n+k+1}| \le (\pc\mc+\varepsilon) |\lp{n+k}|$ on the considered set. In particular, we have shown \eqref{crccalc6}.}\\
Now, we introduce two processes $(\Phi_n)_{n\in\N}$ and $(\Psi_n)_{n\in\N}$,
 which help us -- together with the estimates~{\new \eqref{crccalc6}, \eqref{crccalc1}, and
\eqref{crccalc3}} -- 
to control the number of particles that restart the 
LP at time $\sigma_{j+1}$ (cf.\ Figure~\ref{pic1} and \ref{pic2}). 
For $j\in\N_0$ and $n\in\N$ we define
\[
	 \Phi_n^{(j)}  := Z_{n}(l(\sigma_{j+1})-\sigma_{j+1}+n) 
~\mbox{and}~
	\Psi_n^{(j)}  :=Z_{n}(l(\sigma_{j+1})-\sigma_{j+1}+2+n).
\]
For sake of a better presentation we drop the superscript~$j$ and write just $\Phi_{n}$ and $\Psi_{n}$ if there is no room for confusion.
We observe that we have
\begin{align}
|\Phi_{n+1}| &=|\Psi_{n}|=0\label{crc31}
\intertext{for all $n\le n_0$ due to~{\new \eqref{crc7.1a.x}.}
Furthermore, by definition we have 
$\Phi_{\sigma_{j+1}}=\lp{\sigma_{j+1}}$ and}
 |\Psi_{\sigma_{j+1}}| &=|\Psi_{\sigma_{j+1}-1}|
=|\Psi_{\sigma_{j+1}-2}|=0.\label{crc7.1b}
\end{align}
Again, we split the set of particles $\Phi_n$ into the particles which have moved one step to the right from time~$n-1$ to time~$n$ and the particles which have moved to the left:
\begin{align*}
\Phi_n^+ := &Z_{n}^+(l(\sigma_{j+1})-\sigma_{j+1}+n), \\
\Phi_n^- := &Z_{n}^-(l(\sigma_{j+1})-\sigma_{j+1}+n).
\end{align*}
\begin{figure}[ht]
\vspace{45pt} \hspace{2cm}
\includegraphics [viewport=50 140 210 450, scale=0.75]{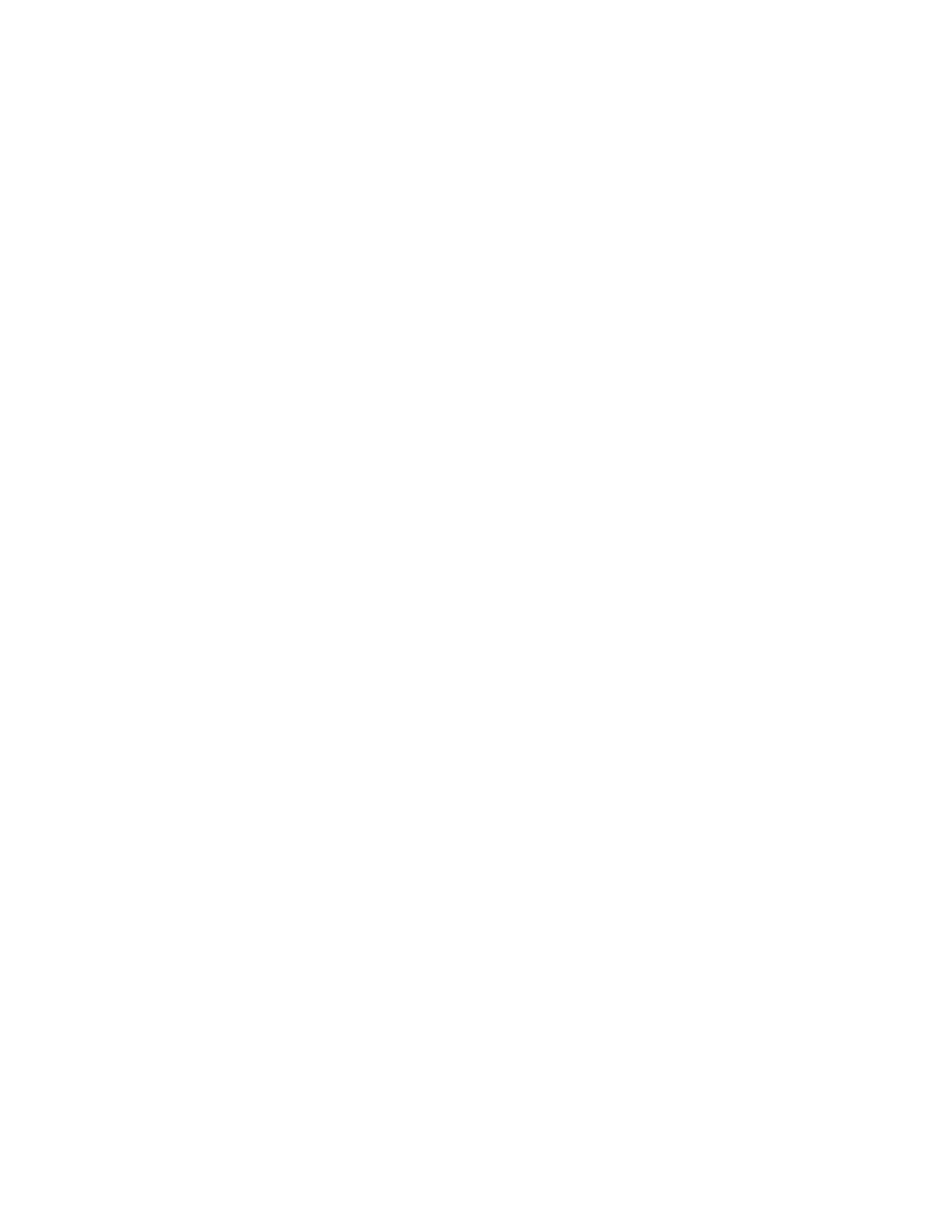}
  \caption{The LP is restarted at time $\sigma_{j+1}$ 
two time steps after the last extinction at time $\sigma_j+\tau_j$. 
The two diagonals represent the processes $(\Phi_n)_{n\in\N}$ 
and $(\Psi_n)_{n\in\N}$.} \label{pic1}
\end{figure}
To obtain an upper bound for {\new$|\Phi_{\sigma_{j+1}}| =  |\lp{\sigma_{j+1}}|$}, 
we use the following recursive structure. We have 
$|\Phi_n^-| \le\, M  |\Psi_{n-1}| $
for $n\in\N$ due to assumption~\eqref{assm}. 
Moreover, on $H_{n_0}$ we have 
$|\Phi_n^+| \le\, |\Phi_{n-1}|  (\po\mo + \varepsilon) 
+ M  \sigma_{j+1}$
for $n_0 +2\le n \le \sigma_{j+1}$ 
(since the particles reproduce and move without cookies) 
and these two facts yield
\begin{align}
|\Phi_n| &=\, |\Phi_n^+| + |\Phi_n^-|
\le |\Phi_{n-1}|  (\po\mo + \varepsilon) + M 
 \sigma_{j+1}+M  |\Psi_{n-1}| \label{crc30}
\end{align}
for $n_0 + 2\le n \le \sigma_{j+1}$. Using~\eqref{crc31}, 
\eqref{crc7.1b}, and $\sigma_{j+1} - n_0 -1$ iterations of
the recursion in~\eqref{crc30},
 we obtain the following upper bound for the particles which start the LP 
at time $\sigma_{j+1}$ on $H_{n_0}$:
\begin{align}
 |\Phi_{\sigma_{j+1}}| \le\, & M\sum_{k=3}^{\sigma_{j+1} - n_0-1} 
|\Psi_{\sigma_{j+1}-k}| (\po\mo + \varepsilon)^{k-1}
+M\sigma_{j+1}\sum_{k=1}^{\sigma_{j+1} - n_0-1}  
(\po\mo + \varepsilon)^{k-1} \allowdisplaybreaks[1]\nonumber\\ 
 \le\, & M\sum_{k=1}^{\sigma_{j+1} - n_0-3} 
|\Psi_{\sigma_{j+1}-k-2}| (\po\mo+ \varepsilon)^{k+1}
+ M \sigma_{j+1}^2 (\mm + \gamma)^{\sigma_{j+1}} .
\label{crc32} 
\end{align}
Note that this bound just depends on $\sigma_{j+1}$ and 
the process $(\Psi_n)_{n\in\N}$. 
For this reason we now take a closer look at $(\Psi_n)_{n\in\N}$ and 
distinguish between the following two cases:
\begin{itemize}
\item In the first case we assume that the LP restarts right after it has died out and we therefore have $\sigma_{j+1} = \sigma_j + \tau_j + 2$. In this case the process $(\Psi_n)_{n\in\N}$ coincides with the LP between time $\sigma_j$ and $\sigma_j+\tau_j$ (cf. Figure~\ref{pic1}).
\item
In the second case we assume that we have $\sigma_{j+1} > \sigma_j + \tau_j + 2$. From this we know that there are no particles in the LP at time $\sigma_{j+1}-2$ and thus the process $(\Psi_n)_{n\in\N}$ is always left of the cookies (cf.\ Figure~\ref{pic2}).
\end{itemize}
In both cases the crucial observation is that the amount of particles in $(\Psi_n)_{n\in\N}$ does not exceed a certain level since none of its offspring reaches the leftmost cookie at time $\sigma_{j+1}-2$.

First, we consider the case $H_{n_0} \cap \{\sigma_{j+1} = \sigma_j + \tau_j + 2\}$. We apply the estimations~{\new \eqref{crccalc6} and \eqref{crccalc3}} to give upper bounds for $|\Psi_{\sigma_{j+1}-k}|=|\Psi_{\sigma_{j} + \tau_j+2-k}|$ for $1\le k\le\sigma_{j+1}-n_0$. We know by definition of $\sigma_j$ that we have $l(\sigma_j-1)=l(\sigma_j)>l(\sigma_j)-1$. Thus, we can apply~\eqref{crccalc3} and conclude that on the event $H_{n_0}$ for $1 \le k \le \sigma_{j}-n_0$ we have
\begin{align*}
|\Psi_{\sigma_{j} - k}| =&\, |Z_{\sigma_j-k}(l(\sigma_j)-k)|
\le |\lp{\sigma_j}|  (\po\mo-\varepsilon)^{-k} + \sigma_j 
{\new \big(1 \vee (\po\mo-\varepsilon)^{-k+1}\big)}
\end{align*}
and by using \eqref{crccalc6} for $0 \leq k \leq \tau_j -1$ 
we get
\begin{align*}
|\Psi_{\sigma_j + k}| =&\, |\lp{\sigma_j + k}| \le  |\lp{\sigma_j}| 
  (\pc\mc + \varepsilon)^k  
+  k  M (\sigma_j + k - 1)
(1+\delta)^{k-1}. 
\end{align*}
Applying these two estimates to~\eqref{crc32} yields
 \begin{align}
 |\Phi_{\sigma_{j+1}}| \leq \ & M \sum_{k=1}^{\tau_j} 
|\Psi_{\sigma_{j}+(\tau_j-k)}| (\po\mo+ \varepsilon)^{k+1} 
 + M \sum_{k=\tau_j+1}^{\sigma_{j}+\tau_j-n_0-1}
|\Psi_{\sigma_{j}-(k-\tau_j)}| (\po\mo+ \varepsilon)^{k+1} \nonumber\\
  & + M \sigma_{j+1}^2  (\mm + \gamma)^{\sigma_{j+1}} 
 \nonumber \\
  \leq \ & M \sum_{k=1}^{\tau_j}  \Big(|\lp{\sigma_{j}}|
 (\pc\mc+\varepsilon)^{\tau_j-k}
+ (\tau_j-k)  M 
 (\sigma_{j}+\tau_j-k-1)  (1+\delta)^{\tau_j-k-1}\Big) 
(\po\mo+ \varepsilon)^{k+1} 
\nonumber \\
  & + M \sum_{k=\tau_j+1}^{\sigma_{j}+\tau_j-n_0-1}  
\Big(|\lp{\sigma_j}|  (\po\mo -\varepsilon)^{-k +\tau_j}
+ \sigma_j 
 {\new \big(1 \vee (\po\mo -\varepsilon)^{-k +\tau_j+1}\big)} \Big) 
 (\po\mo+ \varepsilon)^{k+1} 
\nonumber \\
  & + M \sigma_{j+1}^2 (\mm+\gamma)^{\sigma_{j+1}}
 \nonumber \\
  \le \ & \tau_j^2 M^2 (|\lp{\sigma_{j}}| + \sigma_{j+1}) 
 (1+\delta)^{\tau_j-1} (\mm+\gamma)^{\tau_j+1}\nonumber \\
  &  + {\new \sigma_{j+1} M |\lp{\sigma_{j}}| 
\left(\frac{\po\mo+\varepsilon}{\po\mo-\varepsilon}\right)^{\sigma_{j}}
{\new(\po\mo+\varepsilon)^{\tau_j+1} }
+  2 M  \sigma_{j+1}^2 (\mm+\gamma)^{\sigma_{j+1}}}
\nonumber \\
  \le \ &  2 M^2 \sigma_{j+1}^2 
(|\lp{\sigma_j}|+\sigma_{j+1}) 
(1+\delta)^{\sigma_j + \tau_j - 1} {\new(\mm+\gamma)^{\tau_j+1} }
+ {\new 2}M  \sigma_{j+1}^2 (\mm+\gamma)^{\sigma_{j+1}}.
\label{crcest1a} 
 \end{align}
Here we use~\eqref{crc7} in the last two steps.

If we first investigate $|\lp{\sigma_{j+1}}|$ on the subset 
$\{|\lp{\sigma_j}| \le (\mm + \gamma)^{\sigma_{j}}\} \cap H_{n_0} 
\cap \{\sigma_{j+1} = \sigma_j + \tau_j + 2\}$,
on which we have a limited amount of particles in $\lp{\sigma_j}$, 
we get by using~\eqref{crcest1a}
\begin{align*} 
|\lp{\sigma_{j+1}}| = \ & |\Phi_{\sigma_{j+1}}| \\
\le \ &   2 M^2 \sigma_{j+1}^2 
\big((\mm+\gamma)^{\sigma_{j}}+\sigma_{j+1}\big) 
(1+\delta)^{\sigma_j + \tau_j - 1} {\new (\mm+\gamma)^{\tau_j+1}}
+{\new 2} M  \sigma_{j+1}^2 (\mm+\gamma)^{\sigma_{j+1}}
\nonumber \\
\le \ &  {\new 4} M^2  (\sigma_{j+1}+1)^3 
 (1+\delta)^{\sigma_{j+1}}  (\mm+\gamma)^{\sigma_{j+1}}
\nonumber \\
\le \ & (\mm+3\gamma)^{\sigma_{j+1}} 
\end{align*}
for $n_0$ and thus $\sigma_{j+1} \ge n_0$ large enough 
due to~\eqref{crcdelta}. This shows~\eqref{crcest1} 
in Lemma~\ref{crclem17}.

On the other hand, if we consider the remaining subset
$\{|\lp{\sigma_j}| > (\mm+\gamma)^{\sigma_{j}}\} \cap H_{n_0} 
\cap \{\sigma_{j+1} = \sigma_j + \tau_j + 2\}$,
\eqref{crcest1a} yields
\begin{align*} 
|\lp{\sigma_j}|^{-1} |\lp{\sigma_{j+1}}| = \ & |\lp{\sigma_j}|^{-1} 
 |\Phi_{\sigma_{j+1}}|\\
 \le \ & 2  M^2 \sigma_{j+1}^2 (1+\sigma_{j+1}) 
(1+\delta)^{\sigma_j + \tau_j - 1} (\mm+\gamma)^{\tau_j+1} 
+ {\new 2} M \sigma_{j+1}^2 (\mm+\gamma)^{\tau_{j}+2} 
\nonumber \\
\le \ & {\new 4}  M^2 (\sigma_j+\tau_j+3)^3 
 (1+\delta)^{\sigma_j} (\mm+2\gamma)^{\tau_{j}+2} 
\nonumber \\
\le \ & {\new 4}  M^2  (\sigma_j+\tau_j+3)^3 
  (1+\delta)^{\frac{1}{\beta \log(\mm+\gamma)} \tau_j} 
(\mm+2\gamma)^{\tau_{j}+2} \nonumber \\
\le \ & (\mm+4\gamma)^{\tau_{j}} 
\end{align*}
for $n_0$ and thus $\sigma_{j} \ge n_0$ large enough. 
Here we use~\eqref{crcdelta} and the fact that we have
 $\{\tau_j > \beta \log \big((\mm+\gamma)^{\sigma_j}\big)\}$ 
on the considered event (cf.\ Lemma~\ref{crclem0.2}). 
This shows~\eqref{crcest2} in Lemma~\ref{crclem17}.

\begin{figure}\hspace{2cm}
\includegraphics [viewport=60 135 210 400, scale=0.75]{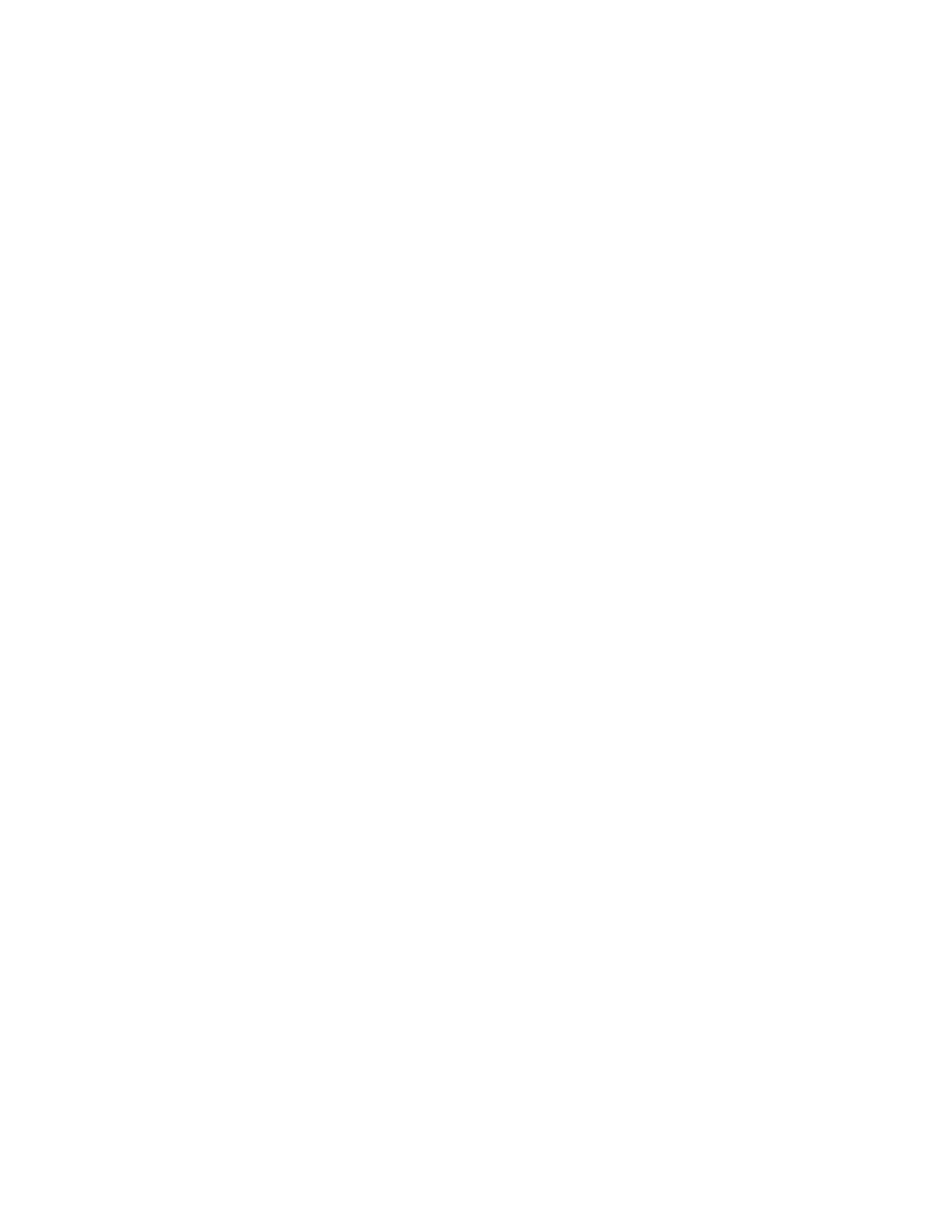}
 \caption{The LP is restarted at time $\sigma_{j+1}$ 
more than two time steps after the last extinction at time $\sigma_j+\tau_j$. The two diagonals represent the processes $(\Phi_n)_{n\in\N}$ and $(\Psi_n)_{n\in\N}$.} \label{pic2}
\end{figure}

We now consider the event $H_{n_0} \cap \{\sigma_{j+1} > \sigma_j + \tau_j + 2\}$. First, we observe that on this set, due to~\eqref{crccalc1}, we have
\begin{align}
|\Psi_{\sigma_{j+1}-2-1}| =\ & |Z_{\sigma_{j+1}-2-1}(l(\sigma_{j+1}) - 1)| 
\le\  \sigma_{j+1}-2-1 \le \sigma_{j+1} \label{crcest17.1}
\intertext{since $|\Psi_{\sigma_{j+1}-2}|
=|Z_{\sigma_{j+1}-2}(l(\sigma_{j+1}))|=0$ holds. 
Further, we observe that the particles which belong to 
$(\Psi_n)_{n\in\N}$ are always to the
 left of the cookies. In particular, we have
$l(\sigma_{j+1}-2-1)=l(\sigma_{j+1})>l(\sigma_{j+1})-1$.
Therefore, we can apply \eqref{crccalc3} and 
conclude, by using \eqref{crcest17.1},}
|\Psi_{\sigma_{j+1}-2-k}| =\ & |Z_{\sigma_{j+1}-2-k}(l(\sigma_{j+1}) - k)| 
\nonumber\\
 \le\ &\sigma_{j+1} (\po\mo - \varepsilon)^{-k} + (\sigma_{j+1}-2-1) 
{\new \big(1 \vee (\po\mo - \varepsilon)^{-k+1}\big)} \nonumber \\
\le \ &  2\sigma_{j+1} {\new \big(1 \vee (\po\mo - \varepsilon)^{-k} \big)}
\label{crccalc101}
\end{align}
for $2 \le k \le \sigma_{j+1} - 2 - n_0$.

With the help of~\eqref{crc32} and~\eqref{crccalc101} 
we get on the event $H_{n_0} \cap \{\sigma_{j+1} > \sigma_j + \tau_j + 2\}$
\begin{align*} 
|\Phi_{\sigma_{j+1}}| \le \ & M \sum_{k=1}^{\sigma_{j+1}-n_0-3}  
|\Psi_{\sigma_{j+1}-2-k}| (\po\mo+ \varepsilon)^{k+1} 
+ M \sigma_{j+1}^2  (\mm+\gamma)^{\sigma_{j+1}} 
\nonumber \\
\le \ &  M \sum_{k=1}^{\sigma_{j+1}-n_0-3}  2 \sigma_{j+1} 
 {\new \big(1 \vee (\po\mo - \varepsilon)^{-k}\big)} (\po\mo+\varepsilon)^{k+1} 
+ M \sigma_{j+1}^2 (\mm+\gamma)^{\sigma_{j+1}} 
\nonumber \\
\le \ &  2  M \sigma_{j+1}^2 
{\new \left( (\po\mo+\varepsilon)^{\sigma_{j+1}-n_0-3} \vee\left(\frac{\po\mo+\varepsilon}{\po\mo-\varepsilon}
\right)^{\sigma_{j+1}-n_0-3} \right)}  (\po\mo+\varepsilon)
 + M  \sigma_{j+1}^2 (\mm+\gamma)^{\sigma_{j+1}}
\nonumber\\
\le \ & 3 M \sigma_{j+1}^2 (\mm+\gamma)^{\sigma_{j+1}} 
\nonumber \\
\le \ & (\mm+2\gamma)^{\sigma_{j+1}} 
\end{align*}
for $n_0$ and thus $\sigma_{j+1} \ge n_0$ large enough. 
Here we use~\eqref{crcdelta} and~\eqref{crc7} in the last two steps. 
This shows~\eqref{crcest3} in Lemma~\ref{crclem17}.
\renewcommand{\qedsymbol}{$\square$}
\end{proof}
We now return to the proof of Proposition~\ref{crccor0}. 
First, we choose $\gamma \in \R$ with
$0< 6\gamma < \alpha-\mm$
and~$n_0$ large enough such that the estimations~\eqref{crcest1}, 
\eqref{crcest2} and \eqref{crcest3} from Lem\-ma~\ref{crclem17} hold. 
Using these estimations, we can conclude that on $H_{n_0}$ we a.s.\ have
\begin{equation} 
\label{crcetafin}
\eta:= \inf\{n \geq n_0:\, |\lp{\sigma_n}| < (\mm+5\gamma)^{\sigma_{n}}\}
< \infty.
\end{equation}
To see this, we just have to see what happens on the event 
\[
H_{n_0} \cap \bigcap\limits_{j=1}^k \Big(\big\{|\lp{\sigma_j}| 
> (\mm+\gamma)^{\sigma_{j}}\big\} \cap \big\{\sigma_{j+1} 
= \sigma_j + \tau_j + 2\big\} \Big).
\]
On this event, we can use~\eqref{crcest2} $k$ times in a row and we get
\[
|\lp{\sigma_{k}}| \leq |\lp{\sigma_0}|\prod_{j=1}^{k} 
(\mm+4\gamma)^{\tau_j} \leq  |\lp{\sigma_0}|  (\mm+4\gamma)^{\sigma_k},
\]
from which we conclude that \eqref{crcetafin} indeed holds on~$H_{n_0}$.

Again by using the three estimations~\eqref{crcest1}, \eqref{crcest2},
 and~\eqref{crcest3} of Lemma~\ref{crclem17}, 
we can see inductively that on the event~$H_{n_0}$ we have
$|\lp{\sigma_{n}}| \leq (\mm+5\gamma)^{\sigma_n}$
for all $n \geq \eta$. Additionally, if we assume
$|\lp{\sigma_n + i-1}|\le (\mm+5\gamma)^{\sigma_n+i-1}$,
we see inductively by using~\eqref{crccalc6} that on the event $H_{n_0}$ 
we have for all $n \geq \eta$ and for all $1\leq i \leq \tau_n-1$ 
\begin{align*}
 |\lp{\sigma_n + i}|\leq \ & |\lp{\sigma_n + i-1}| (\pc\mc+\varepsilon) 
+ (\sigma_n+i-1)  M \\
\leq\ & (\mm+5\gamma)^{\sigma_n+i-1}  (\pc\mc+\varepsilon) 
+ (\sigma_n+i-1)  M \\
\le \ & (\mm+5\gamma)^{\sigma_n+i-1}  (\mm+\gamma) + (\sigma_n+i-1) 
 M \\
\leq\ & (\mm+6\gamma)^{\sigma_n+i} < \alpha^{\sigma_n+i}
\end{align*}
for $n_0$ (and thus also $\sigma_n\geq n_0$) large enough. 
Since by definition of $(\sigma_n)_{n\in\N_0}$ and $(\tau_n)_{n\in\N_0}$ 
the LP is empty at the remaining times, we conclude that we have
\begin{equation}\label{crccalc201}
\p\left(\liminf_{n\to\infty}\big(H_n\cap\{|\lp{n}|<\alpha^n\}\big) \right)=1.
\end{equation}
Finally, {\new \eqref{crccalc201} together with
$\p\left(\liminf_{n\to\infty}H_n\right)=1$ (cf.\ \eqref{crcprophn}),
yields \eqref{crccor1} and finishes the proof of Proposition \ref{crccor0}.}
\renewcommand{\qedsymbol}{$\square$}
\end{proof}
After having investigated the growth of the LP, 
we are now interested in the speed at which the cookies are consumed:
\begin{prop} 
\label{crcprop2}
\renewcommand{\labelenumi}{(\alph{enumi})}
\renewcommand{\labelenumii}{(\roman{enumii})}
\begin{enumerate}
\item There exists $\lambda > 0$ such that we a.s.\ have
   \begin{equation} \label{crclem5.1}
\liminf_{n \to \infty} \frac{l(n)}{n}> \lambda.
   \end{equation}
\item In fact, for $\po\mo>1$ we a.s.\ have
  \begin{equation} \label{crclem5.2}
\lim_{n \to \infty} \frac{l(n)}{n} = 1.
\end{equation}
\end{enumerate}
\end{prop}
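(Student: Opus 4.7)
A key observation valid for both parts is that by the dynamics of the CBRW we have $l(n+1) = l(n) + \mathds{1}_{\{|\lp{n}|\ge 1\}}$, so
\[
l(n) = \sum_{k=0}^{n-1} \mathds{1}_{\{|\lp{k}|\ge 1\}}.
\]
Thus $l(n)/n$ is exactly the fraction of time up to $n-1$ during which the LP is non-empty, and it suffices to bound this alive-fraction from below. We use the decomposition of time into alive/dead periods via the restart times $(\sigma_j)$ and extinction times $\theta_j := \sigma_j + \tau_j$ of the LP as introduced in the proof of Proposition~\ref{crccor0}, with $R_j := \sigma_{j+1} - \theta_j$ the length of the $j$-th dead period. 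Then $l(n)$ agrees (up to a boundary term) with the cumulative sum of the $\tau_j$ by time $n$, and $n - l(n)$ with the cumulative sum of the $R_j$.

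For part~(a) the plan is to establish a uniform tail bound of the form $\p(R_j > K \mid \mathcal{F}_{\theta_j}) \le C\rho^K$ for some $\rho < 1$, from which a Borel--Cantelli argument yields $\sum_{j: \theta_j \le n} R_j \le (1-\lambda)n$ eventually a.s. The key input is that at the extinction instant $\theta_j$, since the LP has just died, every offspring of $\lp{\theta_j-1}$ must have moved to the left (because $\mu_c(0)=0$); hence there is at least one particle at position $l(\theta_j)-2$. These particles and their descendants perform a BRW without cookies that is transient to the right, and the probability that no descendant reaches $l(\theta_j)$ within $K$ steps is controlled via $\varphi_r$ and Proposition~\ref{prop1.11a} together with a large deviation estimate for the first-passage time to a fixed positive level.

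The main obstacle in part~(a) is that when $m_0 < 1$ the BRW without cookies is subcritical, so that a \emph{single} particle emitted into the no-cookie zone has probability only $\varphi_r^2 < 1$ of ever producing a descendant at $l(\theta_j)$. To overcome this, one tracks the accumulation of particles injected into the no-cookie zone during the previous alive phases: using a concentration argument analogous to Lemma~\ref{crclem1}, the injected population is of order comparable to $\sum_{j' \le j} \tau_{j'}$, providing sufficiently many independent attempts to restore the exponential tail of $R_j$ uniformly in the past.

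For part~(b), we exploit that $p_0 m_0 > 1$ forces the BRW without cookies to be supercritical with rightward speed equal to the maximal value $1$, since the expected number of offspring at positions $+1, +2, \ldots$ relative to the parent grows at the rate $p_0 m_0 > 1$. Consequently, starting from the at least one particle present at $l(\theta_j)-2$, the probability that no descendant reaches $l(\theta_j)$ within $K$ steps decays superexponentially in $K$, so that $R_j$ is stochastically dominated by a random variable of small mean uniformly in $j$. Combining this tightness of the $R_j$ with Proposition~\ref{crccor0} (applied with $\alpha$ close to $m^* = p_0 m_0$) to bound the number of restarts before time $n$ and with the trivial inequality $l(n) \le n$, one concludes that $n - l(n) = o(n)$ a.s., which gives $l(n)/n \to 1$.
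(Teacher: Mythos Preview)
Your approach differs substantially from the paper's for both parts, and part~(b) has a real gap.

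\textbf{Part (a).} The paper does not analyse dead periods at all. It couples the CBRW with a single-particle ``excited''-type walk $(W_n)$: the walk steps left with probability~$1$ at each fresh maximum and is a $(\po,\qo)$-walk otherwise. One checks $W_n\le \max_{\nu\in Z_n} X_\nu\le l(n)$, and the cycle structure of $(W_n)$ together with the SLLN gives $W_n/n\to\bigl(1+\tfrac{2}{2\po-1}\bigr)^{-1}$, which immediately yields~\eqref{crclem5.1}. Your route via uniform tail bounds on the $R_j$ is workable in principle, but the ``obstacle'' you identify when $m_0<1$ is illusory: since $\mu_0(0)=0$, the first-offspring line of the particle you locate at $l(\theta_j)-2$ is a nearest-neighbour random walk with drift $\po-\qo>0$, whose hitting time of $+2$ has exponential tails independently of $m_0$. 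No ``accumulation'' argument is needed. Also, passing from $\p(R_j>K\mid\mathcal F_{\theta_j})\le C\rho^K$ to $\liminf l(n)/n>0$ is not just Borel--Cantelli; since $\tau_j\ge 1$ and the $R_j$ are not i.i.d., you need a martingale law of large numbers for $\sum_j R_j$ to conclude that the cumulative dead time is at most $(\text{const})\cdot J(n)$ while the alive time is at least $J(n)$.

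\textbf{Part (b).} Here your argument does not close. Uniform tightness of the $R_j$ (the decay is at best exponential, not superexponential) together with $\tau_j\ge 1$ only reproduces part~(a): it gives $l(n)/n$ bounded away from~$0$, not convergence to~$1$. To get speed~$1$ you must show either that the number of restarts satisfies $J(n)=o(n)$ or that $\tau_j\to\infty$ fast enough to swamp the $R_j$. Your appeal to Proposition~\ref{crccor0} does neither: that proposition gives an \emph{upper} bound $|\lp{n}|<\alpha^n$, which bounds neither the number of restarts nor the alive-period lengths from the correct side. The paper's mechanism is essentially the opposite of what you sketch: using the exponential growth of the total population (Lemma~\ref{crcrem1}), it shows that once some site carries $\ge\gamma^n$ particles, the stream of particles they inject to the right eventually hits the LP and, on the good event $H_{n_0}$, forces $|\lp{\sigma_j}|\ge\gamma^{\sigma_j}$, hence $\tau_j\ge\beta\log(\gamma)\,\sigma_j$ by Lemma~\ref{crclem0.2}, and moreover $R_j=2$ for all large $j$. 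The $\sigma_j$ then grow geometrically, so $J(n)=O(\log n)$ and $n-l(n)=O(\log n)$. Without a lower bound on $|\lp{\sigma_j}|$ (or equivalently on $\tau_j$) your outline cannot reach $l(n)/n\to 1$.
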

\begin{proof}[Proof of Proposition~\ref{crcprop2}]
(a) We compare the CBRW with the following 
process $(W_n)_{n \in \N_0}$, that behaves similarly to an excited 
random walk. It  is determined by the initial configuration $W_0:=0$ and the transition probabilities
\[
 \p\big(W_{n+1}=W_n+1\mid (W_j)_{1\le j\le n}\big)=\begin{cases} 0 
&\text{on } \left\{\max\limits_{j=0,1,\ldots,n-1}W_j<W_n\right\} 
\vspace{2pt}\\
 \po &\text{on } \left\{\max\limits_{j=0,1,\ldots,n} W_j>W_n\right\} 
\end{cases}
\] 
and
\[
\p\big(W_{n+1}=W_n-1\mid (W_j)_{1\le j\le n}\big)=
\begin{cases} 1 
&\text{on } \left\{\max\limits_{j=0,1,\ldots,n-1}W_j<W_n\right\} \vspace{2pt} \\
\qo &\text{on } \left\{\max\limits_{j=0,1,\ldots,n} W_j>W_n\right\} 
\end{cases}  
\]
for $n \in \N_0$.
The process $(W_n)_{n \in \N_0}$ moves to the left with probability~$1$
 every time it reaches a position $x \in \N_0$ for the first time and 
otherwise it behaves as an asymmetric random walk on $\Z$ with transition 
probabilities~$\po$ and~$\qo$. For the random times
$T_x:= \inf\{n \in \N_0:\, W_n=x\}$ (for $x\in\N_0$),
we notice that $\big(T_{x+1}-T_{x}\big)_{x \in \N_0}$ is a sequence of 
i.i.d.\ random variables with
\[
\E[T_{1}-T_{0}]= \E[T_{1}] = 1 + \frac{{\new 2}}{2\po-1}.
\]
Therefore, the strong law of large numbers implies that we a.s.\ have
\[
\lim_{x \to \infty} \frac{T_{x}}{x} = \lim_{x \to \infty} 
\frac1{x}{\sum_{i=0}^{x-1} (T_{i+1}-T_{i})} 
= \E[T_{1}-T_{0}] = 1 + \frac{{\new 2}}{2\po-1}<\infty.
\]
Since we can couple the CBRW and the process $(W_n)_{n \in \N_0}$ 
in a natural way such that we have 
$\max_{\nu \in Z_n} X_{\nu} \ge W_n$
for all $n \in \N_0$, we can conclude that~\eqref{crclem5.1} holds for 
$0<\lambda<\left(1 + \frac{{\new 2}}{2\po-1}\right)^{-1}$. 
\medskip
\noindent
(b) We start this part of the proof with the following lemma:
\begin{lem} 
\label{crcrem1}
For a CBRW with $\mo>1$, there exists $\gamma>1$ such that we a.s.\ have
\begin{equation} 
\label{crcrem1.1}
\lim_{n \to \infty} \frac{|Z_n|}{\gamma^n} = \infty.
\end{equation}
\end{lem}
\begin{proof}[Proof of Lemma \ref{crcrem1}]
Let us treat the case where $\mc>1$ first. 
Let $\big(V_{n,k}\big)_{n,k\in \N}$ be i.i.d.\ random variables with
\begin{align*}
1-\p(V_{1,1}=1) = \p(V_{1,1}=2)&=\min\left\{\sum_{i=2}^{\infty} \mu_0(i), \sum_{i=2}^{\infty} \mu_c(i)\right\}, 
\end{align*}
and we define the corresponding GWP 
$\big(\widetilde{Z}_n\big)_{n \in \N_0}$ by $\widetilde{Z}_0:=1$,
$\widetilde{Z}_{n+1}:=\sum_{i=1}^{\widetilde{Z}_n} V_{n+1,i}$. Observe, that  $\E[V_{1,1}] > 1$. A standard coupling argument reveals that $\widetilde{Z}_{n}\leq |Z_{n}|$. Now, the claim follows since $\widetilde{Z}_{n}$ grows exponentially, e.g. Theorem~I.10.3 on page~30 in~\cite{Athreya/Ney}.

The other case is similar: 
Consider now  the i.i.d.\ random variables $\big(V_{n,k}\big)_{n,k\in \N}$  with
\begin{align*}
1-\p(V_{1,1}=1) = \p(V_{1,1}=2)&=\min\{\qo,\qc\} \sum_{i=2}^{\infty} \mu_0(i), 
\end{align*}
and  define as above  the corresponding GWP 
$\big(\widetilde{Z}_n\big)_{n \in \N_0}$.  For the coupling we observe that the probability of every particle in the CBRW to produce 
a particle which moves to the left is bounded from below by $\min\{\qo,\qc\}$. 
Such a particle cannot be at a position with a cookie and therefore its 
offspring distribution is given by $\big(\mu_0(i)\big)_{i \in \N_0}$. 
Eventually, the corresponding coupling yields $ \widetilde{Z}_n \le |Z_{2n}|$ and the claim follows as above.
\renewcommand{\qedsymbol}{$\square$}
\end{proof}

We now return to the proof of Proposition~\ref{crcprop2}(b).
Let us choose $\varepsilon>0$ such  that
\begin{equation} 
\label{crcchoicee2}
\po\mo-\varepsilon > 1,\ \new{\qc\mc- \varepsilon >0}.
\end{equation}
We use this $\varepsilon$ for the definition of the sets
 $\big(F_n\big)_{n \in \N_0}$ and $\big(H_n\big)_{n \in \N_0}$, see \ \eqref{crc1a} and \ \eqref{crcdefhno}. 
Due to Lemma \ref{crcrem1}  we can choose~$\gamma>1$ such  that we a.s.\ have
\begin{align}
\lim_{n \to \infty} \frac{|Z_n|}{\gamma^{2n}}=\infty & \quad\text{and}\quad 
 {\new \gamma < \po\mo - \varepsilon.} \label{crcgrowthinf}
\end{align}
In addition,  we choose~$n_0$ sufficiently large  such  that we have
for all $n \ge n_0$
\begin{align}
 \gamma^n > n, \quad 
 \gamma^n (\qc\mc-\varepsilon) > (n+1),\quad 
 \gamma^{\beta\log(\gamma^{n})} (\qc\mc-\varepsilon) \ge 1  
\label{crcpropgam}
\end{align}
for some $\beta>0$ which satisfies the assumptions of Lemma~\ref{crclem0.2}. In the following we again investigate the behaviour of the CBRW on the event $H_{n_0}$ on which the process does not show certain unlikely behaviour after time $n_0$ (cf.~\eqref{crc1a} and~\eqref{crc5a}). We show that already the offspring of one position with ``many" particles cause the leftmost cookie to move to the right with speed 1. For this, we introduce the random time
\[
\eta:= \inf \{n \geq n_0:\, \exists x \in \Z \text{ such that } |Z_n(x)| 
\ge \gamma^n\}.
\]
At time~$\eta$ we have sufficiently many particles at the random position $x_0:= \sup\{x\in \Z:\, Z_{\eta}(x)\ge \gamma^{\eta}\}$. Due to~\eqref{crcgrowthinf} we a.s.\ have $\eta<\infty$ since at time $n$ only $n+1$ positions can be occupied. 
Additionally, we introduce the random time
\[
 \sigma_0:= \inf\{n\ge\eta:\, l(n)=x_0 + n - \eta\}
\]
at which offspring of the particles belonging to $Z_{\eta}(x_0)$ can potentially reach the LP for the first time 
after time~$\eta$. Since $\pc\mc\le1$, the LP dies out infinitely often and therefore we a.s.\ have $\sigma_0<\infty$. Then, we define inductively the random times
\begin{align*}
&\tau_j:= \inf\{n \ge \sigma_j:\, |\lp{n}|=0\} - \sigma_j, \text{ for } j 
\geq 0,\\
&\sigma_j:= \inf\{n \ge \sigma_{j-1} + \tau_{j-1}:\, 
|\lp{n}| \neq 0\}, \text{ for } j \geq 1,
\end{align*}
denoting the time period of survival and the time of the restart of the LP after time $\sigma_0$. Due to~\eqref{crcpropgam} we have
\begin{equation} \label{crceq67}
|Z_{\eta}(x_0)| \ge \gamma^{\eta} \ge \eta
\end{equation}
which allows us to use the lower bound for $|Z^+_{\eta+1}(x_0+1)|$ on $H_{n_0}$. By using~\eqref{crcgrowthinf} and~\eqref{crceq67} we get on the event $H_{n_0} \cap \{l(\eta) > x_0\}$
\[
|Z_{\eta+1}(x_0+1)| \geq  |Z^+_{\eta+1}(x_0+1)| \geq  \gamma^{\eta} 
 (\po\mo-\varepsilon) \ge \gamma^{\eta+1}.
\]
Iterating the last step, we see that on the event
\[{\new H_{n_0}  
\cap \bigcap_{i=0}^{k-1}  \{l(\eta+k-1) > x_0 + k-1\} = H_{n_0}  
\cap  \{l(\eta+k-1) > x_0 + k-1\}}\] we have $|Z_{\eta+k}(x_0+k)| \geq \gamma^{\eta+k}$ and therefore we conclude that
\begin{equation*}
|\lp{\sigma_0}| = |Z_{\eta+\sigma_0 - \eta}(x_0+\sigma_0 - \eta)| 
\ge \gamma^{\eta+\sigma_0 - \eta} = \gamma^{\sigma_0}
\end{equation*}
holds on $H_{n_0}$. In the following we see that already the offspring particles of $\lp{\sigma_0}$ which move to the left at time $\sigma_0$ and afterwards move to the right in every step lead to a very large LP at the next restart at time $\sigma_1$. To see this, we first notice that~\eqref{crcpropgam} implies on the event $H_{n_0}$
\[
|Z_{\sigma_0+1}(l(\sigma_0)-1)| \ge |Z^-_{\sigma_0+1}(l(\sigma_0)-1)| 
\ge \gamma^{\sigma_0}  (\qc\mc-\varepsilon) \geq (\sigma_0+1)
\]
since we have $|Z_{\sigma_0}(l(\sigma_0))|\ge \gamma^{\sigma_0}>\sigma_{0}$.
 An iteration of this together with~\eqref{crcgrowthinf} and~\eqref{crcpropgam}  yield for $k \in \N$
\begin{align*}
  |Z_{\sigma_0+1+k}(l(\sigma_0)-1+k)|\ge\,& |Z^+_{\sigma_0+1+k}(l(\sigma_0)-1+k)|
 \\
 \ge\, &  \gamma^{\sigma_0}  (\qc\mc-\varepsilon) 
 (\po\mo-\varepsilon)^{k}\\
  \ge\, & {\new  \gamma^{\sigma_0+k}  }  (\qc\mc-\varepsilon) 
{\new \ge \sigma_0 + k+ 1 }
\end{align*}
on the event $H_{n_0} \cap \{\tau_0 \ge k-1\}$. 
In particular, this implies
\begin{align*}
|\lp{\sigma_0+\tau_0+2}|=\, &|Z_{\sigma_0+\tau_0+2}(l(\sigma_0)+\tau_0)| \\
\ge\, & \gamma^{\sigma_0+2(\tau_0+1)}  (\qc\mc-\varepsilon) \\
\ge \, & \gamma^{\sigma_0+\tau_0+2}  \gamma^{\beta 
\log(\gamma^{\sigma_0})}  (\qc\mc-\varepsilon) 
\ge \gamma^{\sigma_0+\tau_0+2} > 0
\end{align*}
on the event $H_{n_0}$. Here we used that, due to Lemma~\ref{crclem0.2}, we have $\tau_0 \ge \beta \log(\gamma^{\sigma_0})$ and recalled~\eqref{crcpropgam} for  the last inequality. Further, we conclude that we have $\sigma_1= \sigma_0 + \tau_0 + 2$ on $H_{n_0}$, which implies that the LP is restarted two time steps after it has died out at time $\sigma_0+\tau_0$. Iterating this argument finally implies 
\begin{align}
|\lp{\sigma_{j+1}}| \geq \gamma^{\sigma_{j+1}} &\quad\text{ and }\quad
\sigma_{j+1}= \sigma_j + \tau_j + 2  \label{crcgrolp}
\end{align}
for all $j \in \N_0$ on the event $H_{n_0}$. For $\beta^*:= \beta  \log(\gamma) > 0$ we further conclude from~\eqref{crcgrolp} and Lemma~\ref{crclem0.2} by induction that on $H_{n_0}$ we have for $j \in \N_0$
\begin{equation}\label{crcetst1}
 \tau_j \geq \beta  \sigma_j  \log(\gamma) \ge \beta^* 
 (1 + \beta^*)^{j}  \sigma_0 \end{equation}
and thus
\begin{equation}\label{crcetst2}
 \sigma_{j+1} = \sigma_j+\tau_j+2 \geq (1 + \beta^*)^j  
\sigma_0 +\beta^*  (1 + \beta^*)^{j}  \sigma_0  
= (1 + \beta^*)^{j+1}  \sigma_0. 
\end{equation}
Hence, on the event $H_{n_0}$ we have for $n \ge \sigma_0$
\begin{align*}
\frac{l(n)}{n} \ge\, & \frac{l(\sigma_0)+n-\sigma_0-2 |\{j\ge 0:\, 
\sigma_j + \tau_j \le n\}|}{n} 
\ge \frac{l(\sigma_0)+n-\sigma_0-2
 \frac{\log(n) - \log(\sigma_0)}{\log(1+\beta^*)}}{n} 
\xrightarrow[n \to \infty]{}  1.
\end{align*}
Here we use~\eqref{crcgrolp} in the first step and in 
the second step we use the fact that due to~\eqref{crcetst1} 
and~\eqref{crcetst2} we have $\sigma_j+\tau_j \ge (1+\beta^*)^{j+1} \sigma_0$
for $j \in \N_0$. This yields that on the event $H_{n_0}$ we have
$\lim_{n \to \infty} \frac{l(n)}{n} = 1$.
Since by~\eqref{crcprophn} we have $\lim_{n \to \infty} \p(H_n) = 1$,
we finally established~\eqref{crclem5.2}.
\renewcommand{\qedsymbol}{$\square$}
\end{proof}
With Proposition~\ref{crccor0} and Proposition~\ref{crcprop2}
 we are now prepared to prove Theorem~\ref{thm1}(b).
Similarly to the proof of Theorem~\ref{thm1}(a), 
we introduce the events
\[
A_n:=\{\exists \nu \succeq L_n:\, X_{\nu}=0,\, X_{\eta}<l(|\eta|)\, 
\forall\,  L_n\prec \eta \prec \nu\}
\]
with $L_n=\{\nu \in Z_{n+1}(l(n)-1):\, \nu \succ \lp{n}\}$ for $n \in \N$. 
On $A_n$, there exists a particle $\nu$ which returns to 
the origin after time $n$ and additionally the last ancestor of~$\nu$ 
which has been at a position containing a cookie was the ancestor at time~$n$. 
For $\lambda_0, \gamma >0$, which we will choose later (cf.~\eqref{crcgamma1}
 and~\eqref{crcgamma2}), we get the following estimate with 
$\mm=\max\{1,\po\mo\}$:
\begin{align*}
\p \Big(A_n \mid \{l(n)\ge n  \lambda_0\} \cap \{{\new |\lp{n}|} 
\le (\mm+\gamma)^n\}\Big) 
= \, & 1 - \p \Big(A_n^c \mid \{l(n)\ge n  \lambda_0\} 
\cap \{{\new |\lp{n}|} \le (\mm+\gamma)^n\}\Big) 
 \\
\le \, & 1 - \p \Big(\Lambda^-_{\lceil n \lambda_0-1\rceil} = 0 
\Big)^{M(\mm+\gamma)^n}.
\end{align*}
Here we use the fact that the number of offspring of every particle 
belonging to $L_n$ which return to the origin is bounded by the amount 
of offspring in $\Lambda^-_{l(n)-1}$. 
Additionally, we have $|L_n|\le M  |\lp{n}|$ 
due to assumption~\eqref{assm}. 
Since the GWP $\big(\Lambda^-_n\big)_{n \in \N_0}$ with mean 
$\fim$ is subcritical 
we can use Proposition \ref{propo1} to obtain for some constants $c, C>0$ and large $n$ that
\begin{align}
 \p \Big(A_n \cap \{l(n)\ge n  \lambda_0\} \cap \{{\new |\lp{n}|} 
\le (\mm+\gamma)^n\}\Big) 
\le \, & 1 - \big(1-c(\fim)^{\lceil n \lambda_0-1\rceil}
 \big)^{M (\mm+\gamma)^n} 
\nonumber \vphantom{\p \Big(\Lambda^-_{\lceil n \lambda_0-1\rceil} 
= 0 \Big)^{M (\mm+\gamma)^n} }\\
\le\, & 1 - \exp \left(-2  c(\fim)^{\lceil n \lambda_0-1\rceil}
     M (\mm+\gamma)^n \right) 
\nonumber \vphantom{\p \Big(\Lambda^-_{\lceil n \lambda_0-1\rceil} 
= 0 \Big)^{M (\mm+\gamma)^n} }\\
\le\, & 2  c(\fim)^{n \lambda_0-1}  
 M (\mm+\gamma)^n \nonumber \vphantom{\p 
\Big(\Lambda^-_{\lceil n \lambda_0-1\rceil} = 0
 \Big)^{M (\mm+\gamma)^n} } \\ 
=\, & C (\fim)^{n \lambda_0}  (\mm+\gamma)^n 
\label{crcaest1} \vphantom{\p \Big(\Lambda^-_{\lceil n \lambda_0-1\rceil}
 = 0 \Big)^{M (\mm+\gamma)^n} }.
\end{align}
In the above display we use the inequalities $1-x \ge \exp(-2x)$ for $x\in[0,\tfrac12]$ (note that we have $\fim<1$) and $1-\exp(-x) \le x$ 
for all $x \in \R$.

Let us first assume that we have $\mm = \max\{1,\po\mo\} = 1$. We choose $\lambda_0= \lambda/ 2$ for some $\lambda>0$ which satisfies the assumptions of Proposition~\ref{crcprop2}(a). 
We have $\fim \le 2\qo\mo < 1$ and therefore can choose $\gamma > 0$  such that
\begin{align}
(\fim)^{\lambda_0}  (\mm+\gamma)
 \le (2\qo\mo)^{\lambda_0}  (1+\gamma) \le (1-\gamma). 
\label{crcgamma1}
\end{align}
By applying \eqref{crcgamma1} to \eqref{crcaest1}, we get $ \p \Big(A_n \cap \{l(n)\ge n  \lambda_0\} \cap \{{\new |\lp{n}|} \le (\mm+\gamma)^n\}\Big) 
 \le  o(1)  (1-\gamma)^n$. Therefore, the Borel-Cantelli lemma implies
\begin{equation} \label{crcborc1}
\p \left(\limsup_{n \to \infty} 
\Big(A_n \cap \{l(n)\ge n  \lambda_0\} \cap \{{\new |\lp{n}|}
\le (\mm+\gamma)^n\}\Big)\right)=0.
\end{equation}
Moreover, Proposition~\ref{crccor0} and Proposition~\ref{crcprop2} together with the choices of~$\lambda_0$ and~$\gamma$ yield
\[
\p \left(\liminf_{n \to \infty} \Big(\{l(n)\ge n  \lambda_0\} 
\cap \{{\new |\lp{n}|} \le (\mm+\gamma)^n\}\Big)\right) = 1.
\]
Finally, we can conclude from~\eqref{crcborc1} that we have $\p \left(\limsup_{n \to \infty} A_n \right)=0$, which implies the transience of the CBRW in this case.

We now assume that we have $\mm = \po\mo > 1$. Due the assumption of the transience of the BRW without cookies, we have
\[
\fim \po\mo\le 2\qo\mo \cdot \po\mo\le\frac12.
\]
Therefore, we can choose $0<\gamma<1$ such that
\begin{equation} \label{crcgamma2}
(\fim)^{1-\gamma} (\po\mo+\gamma) \leq \frac34.
\end{equation} 
For $\lambda_0:=1-\gamma$, \eqref{crcaest1} and~\eqref{crcgamma2} imply $ \p \Big(A_n \cap \{l(n)\ge n  \lambda_0\} \cap \{{\new |\lp{n} |}\le (\mm+\gamma)^n\}\Big) \le C  \left(\frac34\right)^n$. Again by applying the Borel-Cantelli lemma, we get
\[
\p \left(\limsup_{n \to \infty} \big(A_n \cap \{l(n)\ge n 
 \lambda_0\} \cap \{{\new |\lp{n}|} \le (\mm+\gamma)^n\}\big)\right)=0.
\]
Additionally, Proposition~\ref{crccor0} and Proposition~\ref{crcprop2} together with the choices of~$\lambda_0$ and~$\gamma$ yield
\[
\p \left(\liminf_{n \to \infty} 
\big(\{l(n)\ge n  \lambda_0\} 
\cap \{{\new |\lp{n}|} \le (\mm+\gamma)^n\}\big)\right) = 1.
\]
Therefore, we conclude that we have $\p \left(\limsup_{n \to \infty} A_n \right)=0$, which implies the transience of the CBRW in the case $\po\mo>1$. {\new This finishes the proof of Theorem \ref{thm1}.}$\hfill\blacksquare$

\subsection*{Proof of Theorem~\ref{thm2}}
For this theorem we only have to make sure that the cookies cannot displace the cloud of particles too far to the right. It turns out that, somewhat similarly to the case of a cookie {\new (or excited)} random walk (cf.\ {\new Theorem 12 in \cite{Z05}}) one single cookie at every position $x \in \N_0$ is not enough for such a behaviour.

We divide the proof of the theorem into two cases. At first we consider the case $\mo=1$, i.e.\ particles can only branch at positions with a cookie, and in the second part we consider the case $\mo>1$.

Let us first assume $\mo=1$. In this case the BRW without cookies reduces to a nearest neighbour random walk on~$\Z$ and is therefore strongly recurrent iff $\po=1/2$ holds. {\new Further it is enough to only investigate the path of the first offspring particle in each step since already those particles will visit the origin infinitely often with probability 1. For $\pc \le 1/2$, the strong recurrence is obvious since we can bound the path of the considered particles from above by the path of a symmetric random walk on $\Z$ with the help of a coupling. For $1/2 < \pc < 1$, we can couple the random movement of the considered particles to a symmetric random walk and an excited random walk in the sense of \cite{BW} (with excitement $\varepsilon=2\pc - 1$) such that the position of the considered particles lies between the symmetric random walk (to the left) and the excited random walk (to the right). Since both random walks are recurrent (cf.\ Section 2 in \cite{BW} for the excited random walk), we can again conclude that the CBRW is strongly recurrent.}

 Now we suppose that we have $\mo > 1$. From Proposition~\ref{prop1.11} we know that we have $\log(\mo) > - \frac12 \log \left(4 \po \qo \right) = I(0)$ where $I(\cdot)$ denotes the rate function of the nearest neighbour random walk on~$\Z$ with transition probabilities~$\po$ and~$\qo$. Since the rate function is continuous on $(-1,1)$, there exist {\new $0<\varepsilon,\delta<1$} such that {\new $\log(\mo) > I(-\varepsilon) + \delta$}. Let $\big(S_n\big)_{n \in \N_0}$ denote such a nearest neighbour random walk started in~$0$ and with transition probabilities~$\po$ and~$\qo$. 
We have
\[
\lim_{n \to \infty} \frac1n\log\p\big(S_n \leq -n  \varepsilon \big) 
= \left. \begin{cases}-I(-\varepsilon) &\text{for }2\po-1>-\varepsilon 
\\ 0 &\text{for }2\po-1\le -\varepsilon 
\end{cases} \right\} \ge -I(-\varepsilon).
\]
{\new In particular,} there exists $k_0$ such that $\p \big(S_{k_0} \leq - k_0  \varepsilon \big) \geq \exp\big(-k_0(I(-\varepsilon)+\delta)\big)$. This yields for the BRW without cookies $\big(Y_{\nu}\big)_{\nu \in \mathcal{Y}}$ that
\begin{align}\label{thm2.2}
\E\Big[{\new \Big|\left\{\nu \in \mathcal{Y}:\ |\nu|=k_0,\ Y_{\nu} 
\leq -k_0 \varepsilon\right\} \Big|}\Big] \geq (\mo)^{k_0}
 \exp\big(-k_0 (I(-\varepsilon)+\delta)\big)>1.
\end{align}
for {\new our choice for $\delta, \varepsilon$}. Therefore, we can conclude that the embedded GWP of those particles which move at least $k_0 \varepsilon$ to the left between time $0$ and $k_0$, between $k_0$ and $2k_0$ and so on is supercritical and therefore survives with strictly positive probability $p_{\textnormal{sur}}$. Let us now turn back to the CBRW. For every existing particle {\new $\nu$} the probability
\[
\p \big(\exists \eta \in \mathcal{Z}:\ \eta \succeq \nu,\ 
|\eta|-|\nu|=k_0,\ X_{\eta}=X_{\nu}-k_0\mid  \nu \in \mathcal{Z} \big) 
\geq \min(\qc,\qo) \qo^{k_0}
\]
{\new  to have a descendant $k_0$ generations later which is located $k_0$ positions to the left of the position of $\nu$}\ is bounded away from 0. From this we conclude that the probability
\begin{align} \label{thm2.1}
\p \big(\exists \eta \in \mathcal{Z}:\ \eta \succeq \nu,\ X_{\tau} 
\leq l(|\nu|)\ \forall \nu \preceq\tau\preceq\eta,\ X_{\eta} 
\leq 0 \mid  \nu \in \mathcal{Z} \big)  
\geq\,&\qc \qo^{k_0} p_{\textnormal{sur}}=:c>0
\end{align}
for every existing particle in the CBRW {\new  to have a descendant on the negative semi-axis without any cookie contact in the ancestral line connecting $\nu$ and its descendant } is also bounded away from 0. Here the lower bound is a lower estimate for the probability for each existing particle {\new  $\nu$ in the CBRW to have a descendant $k_0$ generations later which is located $k_0$ positions to the left of the position of $\nu$ } and then starts a surviving embedded GWP which moves at least $k_0 \varepsilon$ to the left between time $0$ and $k_0$, between $k_0$ and $2k_0$ and so on. 
Since the particles we consider for this embedded GWP cannot hit the cookies in between, this GWP has the same probability for survival $p_{\textnormal{sur}}$ as in the case of the BRW without cookies (cf.\ \eqref{thm2.2}). Using~\eqref{thm2.1} we can conclude the strong recurrence of the CBRW since the particles on the negative semi-axis behave as the strongly recurrent BRW without cookies before they can reach a cookie again.$\hfill\blacksquare$

\subsection*{{\new Proof of} Theorem~\ref{thm3}}
\textbf{Proof of part (a).}
Here, we suppose that the LP is supercritical, i.e.\ $\pc\mc>1$. On the one hand the probability that all particles which are produced in the first step move to the left and their offspring then escape to $-\infty$ without returning to 0 is strictly positive since every offspring particle starts an independent BRW without cookies at position~$-1$ as long as the offspring does not return to the origin. We note that the probability for the BRW started at~$-1$ never to return to the origin is strictly positive since the BRW without cookies is transient to the left by assumption.

On the other hand the LP which is started at 0 is a supercritical GWP and therefore survives with positive probability. If it survives, a.s.\ infinitely many particles leave the LP (to the left) at time $n\ge1$. Afterwards each of those particles starts a BRW without cookies at position $n-1\geq 0$ since the offspring cannot reach a cookie again. Each of those BRWs without cookies will a.s.\ produce at least one offspring which visits the origin since the BRW without cookies is transient to the left by assumption. $\hfill\blacksquare$

\medskip
\noindent
\textbf{Proof of part (b).}
Here, we suppose that the LP is critical or subcritical,
 i.e.\ $\pc\mc\le1$.
In the following we want to consider the following three quantities.
 The first one is the {\new number} of particles in the LP. 
The second one is the number of particles which are descendants of the non-LP 
particles of generation $n$ {\new (i.e.\ $Z_n\setminus\lp{n}$)} and which are the first in their ancestral 
line to reach the position $l(n)$. 
{\new By definition,} these particles can potentially change the position {\new $l(n)$} of the leftmost cookie {\new in the future}.
 The third {\new quantity} is the number of {\new particles belonging to $Z_n\setminus\lp{n}$ whose descendants will not reach the position $l(n)$ in the future}. More precisely, for all $n \in \N_0$ we define
\begin{align*}
\zeta_1(n):= \ & \big|\lp{n}\big| ,\\
\zeta_2(n):= \ & {\new \big| \big\{ \nu\succeq Z_n\setminus\lp{n}:\ X_\nu
 = l(n),\, X_\eta <l(n)\, \forall\eta\prec\nu \big\} \big|},\\
\zeta_3(n):= \ & {\new \big| \big\{ \nu \in Z_n\setminus\lp{n}:\ X_\eta 
< l(n),\, \forall\eta\succeq\nu \big\} \big| }.
\end{align*}
Note that for the definition of $\zeta_2(n)$ we count the {\new number} of 
descendants of the non-LP particles at time~$n$ which will reach the 
position~$l(n)$ in the future. Thus the type-2 particles belong to a 
generation larger than~$n$.

In the following we want to allow arbitrary starting configurations from 
the set
\[
 \mathcal{S}:=\left\{(a,b)\in\N_0^\Z\times\N_0:\ \sum_{k\in\Z}a(k)<\infty,\,
\max\{k\in\Z:\,a(k)>0\}\leq b \right\}.
\]
Here $a$ contains the information about the number of particles at each position $k\in\Z$ and~$b$ is the position of the leftmost cookie. In particular, every configuration of the CBRW which can be reached within finite time is contained in the set~$\mathcal{S}$. For each $(a,b)\in\mathcal{S}$ we consider the probability measure~$\p_{(a,b)}$ under which the CBRW starts in the configuration $(a,b)$ and then evolves in the usual way.

The main idea of the proof is the following. We show that there is a critical level for the total amount of the type-1 and type-2 particles. Once this level is exceeded the total amount has the tendency to fall back below this level. There are two reasons which cause this behaviour. On one hand, the expected amount of type-2 particles which stay type-2 particles for another time step decreases every time the leftmost cookie is consumed by a type-1 particle. On the other hand, if there are many type\hbox{-}1 particles, the LP survives for a long time with high probability and meanwhile the remaining particles have time to escape to the left.

For the proof we have to analyse the relation between the type-1 and type-2 particles and to distinguish between two different situations. In the first one, there are type-1 particles at time~$n$ and therefore the leftmost cookie is consumed. In the second case there are no type-1 particles and therefore the position of the leftmost cookie does not change. Let us first assume that there are type-1 particles at time~$n$. Then, on the event $\{\zeta_1(n)\neq0\}$ we a.s.\ have
\begin{align}
& \E_{(a,b)} \big[ \zeta_1(n+1) \mid \zeta_1(n), \zeta_2(n) \big] 
 = \zeta_1(n)  \pc\mc, \nonumber \\
& \E_{(a,b)} \big[ \zeta_2(n+1) \mid \zeta_1(n), \zeta_2(n) \big]  
= \zeta_1(n)  \qc\mc (\fip)^2 + \zeta_2(n)  \fip.  
\label{calc2}
\end{align}
Here the last equality holds since each type-1 particle produces an expected amount of $\qc\mc$ particles which leave the LP to the left. To decide {\new how large their contribution to the type-2 particles counted at time $n+1$ is in expectation}, we have to count the number of their offspring which will reach position $l(n+1)=l(n)+1$ in the future. For each of these particles the distribution of this random number coincides with the distribution of $\Lambda^+_2$ whose expectation is given 
by $(\fip)^2$. 
Additionally, since one cookie is consumed the amount of type-2 particles, which are still type-2 particles at time $n+1$, decreases in expectation by $\fip$. Observe that due to the transience to the left of the BRW without cookies, the process $\big(\Lambda_n^+\big)_{n \in \N_0}$ is a GWP with mean $\fip < 1$ (cf. {\new \eqref{prop1.11a.r2} and \eqref{prop1.11a.r2.2}}).

Let us now assume that the LP is empty. Then, on $\{\zeta_1(n)=0\}$ we a.s.\ have
\begin{align}
 \E_{(a,b)} \big[ \zeta_1(n+1) + \zeta_2(n+1) \mid  \zeta_1(n), 
\zeta_2(n) \big] = \zeta_2(n),  \label{calc3}
\end{align}
since the position of the leftmost cookie does not change, i.e.\ $l(n+1)=l(n)$. Therefore, each type-2 particle of time~$n$ either still is a type-2 particle at time~$n+1$ or becomes a type-1 particle.

First, we deal with the subcritical case, i.e.\ $\pc\mc<1$. For fixed $h\in\N$ (which will be specified later, cf.~\eqref{c4}) we define the following random times
\begin{align*}
 \eta_{n+1}&:=\begin{cases}
(\eta_{n}+ h)\wedge \inf\{i>\eta_{n}:\zeta_1(i)=0\},&\text{if }
\zeta_1(\eta_{n})>0,\\
 (\eta_{n}+ h)\wedge \inf\{i>\eta_{n}:\zeta_1(i)>0\},&\text{if }
\zeta_1(\eta_{n})=0,
         \end{cases}
\end{align*}  
for $n\in\N_0$ and $\eta_0:=0$. We note that we have $\eta_{n+1}-\eta_n\le h$. 
For $n\in\N_0$ we define
\[
 \xi_1(n):=\zeta_1(\eta_n), \quad
\xi_2(n):=\zeta_2(\eta_n)
\]
as the amount of type-1 and type-2 particles along the sequence $(\eta_n)_{n\in\N_0}$ and the associated filtration $\mathcal{F}_n:=\sigma\big(\xi_1(i),\ \xi_2(i),\ \eta_i:\ i\le n\big)$. We want to adapt Theorem~2.2.1 of~\cite{Menshikov} 
and start with the following lemma:
\begin{lem}\label{c2}
For suitable (large) $h,u\in\N$ we have
\begin{equation}\label{c3}
\E_{(a,b)}[\xi_1(n+1) + \xi_2(n+1)\mid \mathcal{F}_n] \leq \xi_1(n) + \xi_2(n) 
\end{equation} 
a.s.\ on $\{\xi_1(n)+\xi_2(n) \geq u\}$ for all $(a,b)\in\mathcal{S}$.
\end{lem}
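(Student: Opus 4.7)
The plan is to split on whether $\xi_1(n)=0$ or $\xi_1(n)>0$ and exploit that $\eta_{n+1}-\eta_n\leq h$ is a bounded stopping time, so optional stopping applies. On $\{\xi_1(n)=0\}$ the LP stays extinct throughout $[\eta_n,\eta_{n+1})$ and no cookie is consumed, so equation~\eqref{calc3} identifies $\zeta_1+\zeta_2$ as a martingale on this interval; optional stopping at $\eta_{n+1}$ then yields $\E_{(a,b)}[\xi_1(n+1)+\xi_2(n+1)\mid\mathcal{F}_n]=\xi_2(n)=\xi_1(n)+\xi_2(n)$, so the required inequality holds with equality for any choice of $u$.

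On $\{\xi_1(n)>0\}$ let $\tau$ denote the first LP extinction time after $\eta_n$, so that $\eta_{n+1}=\eta_n+\tau\wedge h$, and abbreviate $\alpha:=\pc\mc$, $\gamma:=\fip$, $\beta:=\qc\mc\fip^2$; the subcriticality and transience hypotheses give $\alpha,\gamma\in(0,1)$. During $[\eta_n,\eta_n+\tau)$ the LP evolves as an independent subcritical Galton--Watson process of mean~$\alpha$, so optional stopping of the martingale $\zeta_1(\eta_n+\cdot)/\alpha^{(\cdot)}$ at $\tau\wedge h$ yields $\E[\xi_1(n+1)\mid\mathcal{F}_n]=\alpha^h\xi_1(n)$. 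Iterating \eqref{calc2} up to the same stopping time produces the identity
\[
\E[\xi_2(n+1)\mid\mathcal{F}_n]=\xi_2(n)+\beta\xi_1(n)\frac{1-\alpha^h}{1-\alpha}-(1-\gamma)\sum_{k=0}^{h-1}\E\bigl[\zeta_2(\eta_n+k)\mathds{1}_{k<\tau}\bigm|\mathcal{F}_n\bigr],
\]
so that the supermartingale inequality $\E[\xi_1(n+1)+\xi_2(n+1)\mid\mathcal{F}_n]\leq\xi_1(n)+\xi_2(n)$ reduces to
\[
\frac{(\alpha+\beta-1)(1-\alpha^h)}{1-\alpha}\,\xi_1(n)\leq(1-\gamma)\sum_{k=0}^{h-1}\E\bigl[\zeta_2(\eta_n+k)\mathds{1}_{k<\tau}\bigm|\mathcal{F}_n\bigr].
\]
In the easy regime $\alpha+\beta\leq 1$ the left side is non-positive and there is nothing to prove.

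The main obstacle is the regime $\alpha+\beta>1$, where no linear function of $(\xi_1,\xi_2)$ is a one-step supermartingale and one must instead exploit the contraction of the $h$-step transition matrix $\mathcal{M}=\bigl(\begin{smallmatrix}\alpha&0\\\beta&\gamma\end{smallmatrix}\bigr)$, whose eigenvalues $\alpha,\gamma$ are both strictly less than~$1$. The submartingale property of $\zeta_2(\eta_n+k)/\gamma^k$ on $\{k<\tau\}$ combined with a positive-association argument between the LP trajectory and the $\zeta_2$-process gives the lower bound $\E[\zeta_2(\eta_n+k)\mathds{1}_{k<\tau}\mid\mathcal{F}_n]\geq\gamma^k\xi_2(n)\,\p(k<\tau\mid\mathcal{F}_n)$. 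Under $\xi_1(n)+\xi_2(n)\geq u$ one handles two sub-regimes: if $\xi_2(n)$ is a large enough multiple of $\xi_1(n)$, this lower bound directly makes the $(1-\gamma)$-drag dominate the $\beta\xi_1$-growth; if instead $\xi_1(n)$ dominates, Proposition~\ref{propo1} gives $\p(\tau\leq h\mid\mathcal{F}_n)\leq\exp(-c\alpha^h\xi_1(n))$, so on $\{\tau>h\}$ the iterated matrix estimate
\[
\E\bigl[(\zeta_1+\zeta_2)(\eta_n+h)\bigm|\mathcal{F}_n,\tau>h\bigr]=(\alpha^h+B_h)\xi_1(n)+\gamma^h\xi_2(n),
\]
with $B_h=\beta(\alpha^h-\gamma^h)/(\alpha-\gamma)\to 0$, falls strictly below $\xi_1(n)+\xi_2(n)$ for $h$ large, while a crude deterministic bound for $\zeta_1+\zeta_2$ on $\{\tau\leq h\}$ combined with the tiny extinction probability makes this complementary contribution negligible once $u$ is chosen large enough. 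Selecting first $h$ large and then $u$ large completes the argument.
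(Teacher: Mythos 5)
Your overall architecture is essentially the paper's: the paper also fixes a window length $h$ first, splits $\{\xi_1(n)+\xi_2(n)\ge u\}$ into three events ($\xi_1=0$; type-2 dominant; type-1 dominant), handles the type-2--dominant case by the loss factor caused by one consumed cookie, and handles the type-1--dominant case by ``with many LP particles the LP survives the whole window except with small probability; on survival the $h$-step coefficients $(\pc\mc)^h+\qc\mc\sum_{i}(\pc\mc)^i(\fip)^{h-i+1}$ and $(\fip)^h$ are small; on early extinction use a crude bound times the small extinction probability''. Your optional-stopping identity, the resulting reduction to the displayed inequality, the easy regime $\alpha+\beta\le 1$, and the $\xi_2$-dominant sub-regime are all correct (for the latter the $k=0$ term alone suffices, and the lower bound you invoke needs no ``positive association'': the event $\{k<\tau\}$ is measurable with respect to the LP's own randomness, because no outside particle can reach the leftmost cookie while the LP is alive, so independence of the non-LP offspring from the LP trajectory already gives it).

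Two steps in the $\xi_1$-dominant sub-regime are, however, wrong as stated, though repairable. First, the claimed identity $\E\big[(\zeta_1+\zeta_2)(\eta_n+h)\mid\mathcal{F}_n,\tau>h\big]=(\alpha^h+B_h)\xi_1(n)+\gamma^h\xi_2(n)$ is false: conditioning on $\{\tau>h\}$ size-biases the LP upwards (already for $\xi_1(n)=1$ the conditional mean of $\zeta_1(\eta_n+h)$ exceeds $\alpha^h$). What is true, and what you actually need, is the upper bound $\E\big[(\zeta_1+\zeta_2)(\eta_n+h)\1{\tau>h}\mid\mathcal{F}_n\big]\le(\alpha^h+B_h)\xi_1(n)+\gamma^h\xi_2(n)$, obtained by dropping the indicator separately on the three nonnegative contributions (particles still in the LP, surviving original type-2 mass, emigrants whose descendants re-reach the moving front); this is exactly the paper's estimate. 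Second, there is no ``crude deterministic bound for $\zeta_1+\zeta_2$ on $\{\tau\le h\}$'': $\zeta_2(\eta_{n+1})$ is a count of \emph{future} first-passage particles (a $\Lambda^+$-type quantity) and is unbounded, whatever $\xi_1(n),\xi_2(n),h,M$ are. You must instead bound its conditional expectation on the early-extinction event, e.g. by $\big(M^{h-1}\xi_1(n)+\fip\,\xi_2(n)\big)\,\p(\tau\le h\mid\mathcal{F}_n)$, using again that the pre-existing type-2 particles and the emigrants evolve independently of the LP trajectory, that at most $M^{h-1}\xi_1(n)$ LP-descendants can exist at the stopping time, and that each of them sits strictly to the left of the front, hence contributes at most one expected future first-reacher. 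With these two corrections your ``first $h$ large, then $u$ large'' scheme goes through and coincides in substance with the paper's treatment of its event $A_3$.
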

\begin{proof}[Proof of Lemma~\ref{c2}]
Let us fix $(a,b)\in\mathcal{S}$. We choose $h\in\N$ large enough such that
\begin{equation}\label{c4}
 \big(\pc\mc\big)^h+\qc\mc \sum_{i=0}^{h-1}\big(\pc\mc\big)^i
 (\fip)^{h-i+1}<\frac12
\end{equation}
and
\begin{equation}\label{c5}
 (\fip)^{h}<\frac12.
\end{equation}
Such a choice is possible since $\pc\mc<1$ and $\fip<1$. 
 Then, we fix $c=c(h)$ such that
\begin{equation}\label{c6}
0<c\le\frac1{M^h} \left(1-\fip\right)
\end{equation}
holds. Recall that the particles in the LP constitute a subcritical GWP. Let $\big(GW_n^{\textnormal{sub}}\big)_{n\in\N_0}$ denote such a GWP (with the same offspring distribution). Then, for every $\delta>0$ there is $u=u(\delta,h,c)\in\N$ such that
\begin{equation}\label{c7}
 \p_{\left\lfloor uc/(c+1) \right\rfloor} \left( GW_h^{\textnormal{sub}}
\ge 1 \right)\ge1-\delta
\end{equation}
since the probability for each existing particle to have at least one offspring which moves to the right is strictly positive.

We now verify \eqref{c3} separately on the following three events:
\begin{align*}
 A_1&:=\{\xi_1(n)+\xi_2(n)\ge u\}\cap\{ \xi_1(n)=0 \},\\
 A_2&:=\{\xi_1(n)+\xi_2(n)\ge u\}\cap\{ 0<\xi_1(n)\le c \xi_2(n) \},\\
 A_3&:=\{\xi_1(n)+\xi_2(n)\ge u\}\cap\{ \xi_1(n)> c \xi_2(n) \}.
\end{align*}
Note that $A_1\cup A_2\cup A_3=\{\xi_1(n)+\xi_2(n)\ge u\}$.

On $A_1$ there is no particle in the LP between time $\eta_n$ and time $\eta_{n+1}$ by definition. Thus, the position of the leftmost cookie does not change during this period. Hence we a.s.\ have 
\[
 \E_{(a,b)}[ \xi_1(n+1)+\xi_2(n+1)\,|\,\mathcal{F}_n] \mathds{1}_{A_1}
=\xi_2(n) \mathds{1}_{A_1}
\]
due to \eqref{calc3}.

On $A_2$ there is at least one particle in the LP and thus the leftmost cookie is consumed at time $\eta_n$. 
Using $\eta_{n+1}-\eta_n\leq h$ and the fact that the total number of offspring of each particle is bounded by~$M$, we a.s.\ obtain on~$A_2$
 \[
\E_{(a,b)}[ \xi_1(n+1)+\xi_2(n+1)\,|\,\mathcal{F}_n] 
\le \big( \xi_1(n) M^h+ \fip \xi_2(n)\big)
\le \xi_2(n) \big( c M^h +\fip \big)
\le \xi_2(n).
\] 
Here we use \eqref{c6} in the last step.

Next, recall that $L_n=\left\{\nu\in Z_{n+1}(l(n)-1): \nu\succ \lp{n}\right\}$ denotes the {\new set} of particles which leave the leading process to the left at time $n$. Using \eqref{calc2} we a.s.\ get on the event~$A_3$
\begin{align*}
\lefteqn{    \E_{(a,b)}[ \xi_1(n+1)+\xi_2(n+1)\,|\,\mathcal{F}_n] }
\\
&= \E_{(a,b)}\big[ \big(\xi_1(n+1)+\xi_2(n+1)\big)
\mathds{1}_{\{ \eta_{n+1}-\eta_n<h \}}\mid \mathcal{F}_n\big] 
 \\
 &\hspace{0.5cm}
+\E_{(a,b)}\big[ \big(\xi_1(n+1)+\xi_2(n+1)\big)
 \mathds{1}_{\{ \eta_{n+1}-\eta_n=h \}}\mid 
\mathcal{F}_n\big] \\
&\le \left(M^{h-1} \xi_1(n)+\fip \xi_2(n)\right)
 \E_{(a,b)}\left[\mathds{1}_{\{ \eta_{n+1}-\eta_n<h \}}\mid 
\mathcal{F}_n\right]
+(\fip)^h \xi_2(n) \E_{(a,b)}\left[
\mathds{1}_{\{ \eta_{n+1}-\eta_n=h \}}\mid \mathcal{F}_n\right] 
\vphantom{\sum_{i=0}^{h-1}}\\
      &\hspace{0.5cm}+ \E_{(a,b)}\left[ |\lp{\eta_n+h}| 
  \mathds{1}_{\{ \eta_{n+1}-\eta_n=h \}}\,\Big|\,\mathcal{F}_n\right]
 \vphantom{\sum_{i=0}^{h-1}}\\
&\left.\hspace{0.5cm}+\ \sum_{i=0}^{h-1} \E_{(a,b)}\hspace{-2pt} 
\left[\sum_{\nu\succeq L_{\eta_n+i}}\mathds{1}_{\{X_{\nu}=l(\eta_n)+h,\, 
X_\eta<l(\eta_n)+h\,\forall\eta\prec\nu\}} 
 \mathds{1}_{\{ \eta_{n+1}-\eta_n=h \}} \,\right|\mathcal{F}_n \right]
\hspace{-0.61pt} \vphantom{\sum_{i=0}^{h-1}}. 
\end{align*}
Here in the second step we use that on the event $\{\eta_{n+1}-\eta_n<h\}$ (in expectation) the proportion at most~$\fip$ of the type-2 particles does not escape to the left since at least one cookie is consumed. On the event $\{\eta_{n+1}-\eta_n=h\}$ we consider three summands. The first corresponds to the type-2 particles at time~$\eta_n$ that are still type-2 particles at time $\eta_{n+1}$. The second corresponds to the particles that are still in the LP at time $\eta_{n+1}$ and the third to the particles which have left the LP in the meantime. Using~\eqref{calc2} and the fact that we have at least $\lfloor u c/(c+1)  \rfloor$ type-1 particles on the event~$A_3$, we continue the calculation and obtain that on the event~$A_3$ we a.s.\ have
\begin{align*}
 \E_{(a,b)}[ \xi_1(n+1)+\xi_2(n+1)\,|\,\mathcal{F}_n] 
\le&\ \Bigg[ \left(M^{h-1} \xi_1(n)+\fip \xi_2(n)\right)
 \p_{\left\lfloor u c/(c+1) \right\rfloor} 
\left( GW_h^{\textnormal{sub}} = 0 \right) \\
      &\hspace{0.5cm}+(\fip)^h \xi_2(n) 
+\big(\pc\mc\big)^h \xi_1(n) 
 \\
       &\hspace{0.5cm}
+\sum_{i=0}^{h-1}\xi_1(n) (\pc\mc)^i
 (\qc\mc)
  (\fip)^{h-i+1}\Bigg] 
\\
\le&\   \left(M^{h-1} \delta + \frac12\right)
 \xi_1(n)+\left(\fip \delta+\frac12\right)
  \xi_2(n) \\
\le&\ \xi_1(n)+\xi_2(n)
\end{align*}
for $\delta=\delta(M,h,\fip)$ sufficiently small. Here we use~\eqref{c4}, \eqref{c5}, and~\eqref{c7} for the latter estimates.
\renewcommand{\qedsymbol}{$\square$}
\end{proof}

We now turn to the case when we have a critical leading process, i.e., $\pc\mc=1$. Again for some $c>0$, which we specify later (cf.\ \eqref{condition c}), we inductively define the following random times 
\begin{align*}
\eta_{n+1}:= \begin{cases} \eta_n + 1 ,& \text{if }\zeta_2(\eta_n) 
\geq c  \zeta_1(\eta_n), \\
\inf \{ n > \eta_n: \ \zeta_1(n)=0 \}, & \text{if }\zeta_2(\eta_n) < c 
 \zeta_1(\eta_n),
\end{cases}
\end{align*}
for $n \in \N_0$ and $\eta_0  := 0$. Similarly to above, we define 
for $n \in \N_0$
\[
\xi_1(n)  := \zeta_1(\eta_n), \quad
\xi_2(n)  := \zeta_2(\eta_n)
\]
and the associated filtration 
$\mathcal{F}_n:=\sigma\big(\xi_1(i),\ \xi_2(i), \eta_i:i\le n\big)$.
Analogously to Lemma~\ref{c2}, we continue with the following 
\begin{lem}\label{c8.1}
For suitable (large) $u \in \N$ we have 
\begin{equation}\label{c9}
\E_{(a,b)}[\xi_1(n+1) + \xi_2(n+1)|\mathcal{F}_n]  \leq \xi_1(n) + \xi_2(n)
\end{equation}
a.s.\ on $\{\xi_1(n) + \xi_2(n) \geq u\}$ for all $(a,b)\in\mathcal{S}$.
\end{lem}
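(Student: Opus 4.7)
My plan is to mimic the structure of the proof of Lemma~\ref{c2}, adapting the stopping times $(\eta_n)$ to the critical case: on $\{\zeta_2(\eta_n)\geq c\zeta_1(\eta_n)\}$ I will only look one step ahead, while on $\{\zeta_2(\eta_n)<c\zeta_1(\eta_n)\}$ I will let the critical LP die out entirely. Throughout $c$ will be any number with $c\geq \bar c:=\qc\mc\fip^{2}/(1-\fip)$, and $u$ will be fixed sufficiently large at the end.

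On the event $\{\zeta_2(\eta_n)\geq c\zeta_1(\eta_n)\}$ the verification is direct. When $\zeta_1(\eta_n)=0$, \eqref{calc3} gives $\E[\xi_1(n+1)+\xi_2(n+1)|\mathcal{F}_n]=\zeta_2(\eta_n)=\xi_1(n)+\xi_2(n)$. When $\zeta_1(\eta_n)>0$, using $\pc\mc=1$ in \eqref{calc2} yields the one-step drift
\[
\E[\xi_1(n+1)+\xi_2(n+1)|\mathcal{F}_n] - (\xi_1(n)+\xi_2(n)) \;=\; \zeta_1(\eta_n)\qc\mc\fip^{2} \;-\; (1-\fip)\zeta_2(\eta_n),
\]
which is non-positive by the standing hypothesis combined with the choice $c\geq\bar c$.

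On $\{\zeta_2(\eta_n)<c\zeta_1(\eta_n)\}$ we have $\xi_1(n+1)=0$ by the definition of $\eta_{n+1}$, so it suffices to bound $\E[\zeta_2(\eta_{n+1})|\mathcal{F}_n]$. Set $T:=\eta_{n+1}-\eta_n$ and $Z_k:=\zeta_1(\eta_n+k)$; then $(Z_k)$ is a critical Galton--Watson process with extinction time $T$ starting from $z:=\xi_1(n)$. Decomposing the type-2 particles at time $\eta_{n+1}$ according to their origin (descendants of the $\zeta_2(\eta_n)$ pre-existing type-2 promises on the one hand, and descendants of the particles leaving the LP to the left at some intermediate time $\eta_n+k$ with $0\leq k<T$ on the other), and using that each such offshoot starts an independent BRW without cookies whose future is independent of the LP future, I arrive after a conditioning argument at
\[
 \E[\zeta_2(\eta_{n+1})|\mathcal{F}_n] \;\leq\; \zeta_2(\eta_n)\,\E[\fip^{T}\mid\mathcal{F}_n] \;+\; \qc\mc\fip\sum_{k\geq 0}\E\bigl[Z_k\,G(Z_k)\,\mathds{1}_{\{Z_k>0\}}\bigm|\mathcal{F}_n\bigr],
\]
where $G(y):=\E[\fip^{T_y}]$ and $T_y$ is the extinction time of a critical GWP started with $y$ particles.

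The core step is to show that this sum is bounded by a constant $C_*$ independent of $z$. Splitting $G(y)\leq \fip^{n_0}+\p(T_y\leq n_0)$ and optimising in $n_0$ via Proposition~\ref{propo3} at $n_0=\sqrt{Cy/|\log\fip|}$ yields the stretched-exponential bound $G(y)\leq 2\exp(-C_5\sqrt y)$, so in particular $y\mapsto yG(y)$ is bounded. Combining this with Proposition~\ref{propo4} (equivalently $\p(Z_k>0\mid Z_0=z)\leq Cz/k$) and with the fact that on $\{Z_k>0\}$ with $k\gg z$ the variable $Z_k$ is typically of order $k$ (a Yaglom-type statement for the surviving critical GWP), I split the sum at $k=z$ and check that each piece is $O(1)$ uniformly in $z$. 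Substituting this gives $\E[\zeta_2(\eta_{n+1})|\mathcal{F}_n]\leq \zeta_2(\eta_n)+\qc\mc\fip\,C_*$; since on the current event $\xi_1(n)>u/(1+c)$, the choice $u>(1+c)\qc\mc\fip\,C_*$ then yields~\eqref{c9}. The hard part will be this uniform-in-$z$ control: the crude bound $\sum_k\p(Z_k>0)=\E[T_z]=\infty$ is useless, so one really must exploit the Yaglom behaviour of the surviving critical GWP in order to obtain the required cancellation.
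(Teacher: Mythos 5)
Your setup matches the paper's: the same two-regime stopping times, the same split according to $\zeta_2\gtrless c\,\zeta_1$, and on the one-step event your exact drift computation from \eqref{calc2} (non-positive once $c\ge \qc\mc(\fip)^2/(1-\fip)$) is in fact cleaner than the paper's crude bound $M\xi_1(n)+\fip\xi_2(n)$. The problem is the excursion event $\{\zeta_2<c\,\zeta_1\}$, which is where all the work lies, and there your argument has a genuine gap. After the (legitimate, modulo replacing $\qc\mc$ by $M$ to absorb the correlation between the number of emigrants at step $k$ and the residual lifetime) reduction to
\[
\sum_{k\ge 0}\E\bigl[Z_k\,G(Z_k)\,\1{Z_k>0}\bigr],\qquad G(y)=\E\bigl[(\fip)^{T_y}\bigr],
\]
you assert that this sum is bounded by a constant uniformly in the initial size $z$, but you do not prove it. The ingredients you actually have — the stretched-exponential bound $G(y)\le 2e^{-C_5\sqrt{y}}$ from Proposition~\ref{propo3} and the survival bound $\p(Z_k>0)\le Cz/k$ — only give $\E[Z_kG(Z_k)\1{Z_k>0}]\le C''\min(1,Cz/k)$, whose sum diverges, as you yourself note. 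Closing the gap requires a quantitative statement of the form $\p(1\le Z_k\le A)\lesssim A\,z/k^2$ for $k\gtrsim z$ (and an even finer bound, with a factor like $e^{-cz/k}$, for $k\lesssim z$), i.e.\ a uniform-in-$z$ local/Yaglom-type estimate for a critical GWP started from $z$ particles. No such estimate is among Propositions~\ref{propo1}--\ref{propo4}, and invoking ``Yaglom behaviour'' in the weak sense (convergence in law of $Z_k/k$ given survival) does not yield the required summable, $z$-uniform bounds. Since this is precisely the step you label ``the hard part,'' the proof is incomplete at its core.

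It is also worth noting that your target is stronger than necessary: the supermartingale inequality only needs the expected number of newly created type-2 particles during an excursion started from $v$ type-1 particles to be $o(v)$ (with $u$ then chosen via $u\ge(1+c)v_0$), not $O(1)$. The paper obtains exactly this weaker bound by a different, more pedestrian decomposition: it conditions on the excursion length $j=\eta_{n+1}-\eta_n$, controls the maximal LP size over the whole excursion and over the last $j^{\delta}$ steps on dyadic scales (the events $G_j^\ell$ and $F_j^k$), and uses Propositions~\ref{propo3} and~\ref{propo4} to show that long excursions with a large LP near their end are sufficiently unlikely; emigrants leaving earlier than the last $j^{\delta}$ steps are discounted by $(\fip)^{j^{\delta}}$. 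That machinery is exactly what replaces the unproved uniform estimate in your sketch, so either you supply a proof of the quantitative Yaglom-type bound you rely on, or you should fall back on an excursion-length decomposition of this kind.
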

\begin{proof}[Proof of Lemma \ref{c8.1}]
Let us fix $(a,b)\in\mathcal{S}$. Again for some $u=u(c) \in \N$, which we specify later (cf.~\eqref{condition u}), we introduce the following sets
\begin{align*}
A_1 & := \{ \xi_1(n) + \xi_2(n) \geq u \} \cap \{\xi_2(n)
\geq c  \xi_1(n)  \}, \\
A_2 & := \{ \xi_1(n) + \xi_2(n) \geq u \} \cap \{\xi_2(n) 
< c  \xi_1(n)  \}.
\end{align*}
and show \eqref{c9} on the sets $A_1$ and $A_2$ separately.

On $A_1$ we a.s.\ have
\begin{align*}
 \E_{(a,b)}[\xi_1(n+1) + \xi_2(n+1)|\mathcal{F}_n]
\leq \ & \mathds{1}_{\{ \xi_1(n)= 0 \}}   \xi_2(n) 
+ \mathds{1}_{\{ \xi_1(n) > 0 \}}  \left( \fip  \xi_2(n) 
+ M  \xi_1(n) \right) \\
\leq \ & \mathds{1}_{\{ \xi_1(n)= 0 \}}  \xi_2(n) 
+ \mathds{1}_{\{ \xi_1(n) > 0 \}}  \left( \fip
 \xi_2(n) + M  c^{-1}   \xi_2(n) \right)
\\
\leq \ & \left[ \mathds{1}_{\{ \xi_1(n)= 0 \}}  
+ \mathds{1}_{\{ \xi_1(n) > 0 \}}  \left( \fip  + M
           c^{-1}  \right)\right]  \xi_2(n)
\\
\leq \ & \xi_2(n)
\end{align*}
for any
\begin{equation}
\label{condition c}
0<c \leq M  \left(1 - \fip \right)^{-1}.
\end{equation}
Here we use that on the event $A_1$ we have $\eta_{n+1} = \eta_n + 1$. If $\xi_1(n)= 0$ holds, then no cookie is eaten at time $\eta_n$ and therefore we have $\xi_2(n+1) = \xi_2(n)$. If $\xi_1(n)> 0$ holds, the leftmost cookie is consumed and therefore in expectation the amount of the type-2 particles is reduced by the factor~$\fip$.

Next, to investigate the behaviour on the event~$A_2$, consider first the case $(\xi_1(n),\xi_2(n))=(v,0)$ with $v\in\N$. From this we can easily derive the general case later on since each time a cookie is consumed the number of type-2 particles is reduced by the factor $\fip<1$. Therefore, the type-2 particles do not essentially contribute to the growth of the process. We have:
\begin{align}\label{c10}
\lefteqn{ \E_{(a,b)}[\xi_1(n+1) + \xi_2(n+1)\mid \mathcal{F}_n] 
 \1{(\xi_1(n),\xi_2(n))=(v,0)} }\nonumber \\
&= \E_{(a,b)}[\xi_2(n+1)|\mathcal{F}_n]  \1{(\xi_1(n),\xi_2(n))=(v,0)} 
\nonumber \\
&= \Big(\E_{(a,b)}\left[\xi_2(n+1)
 \mathds{1}_{\left\{\eta_{n+1}-\eta_n\le  v^{1/3} \right\}}
\mid\mathcal{F}_n\right] 
\nonumber \\
&\qquad+ \sum_{j> v^{1/3}} \E_{(a,b)}\left[\xi_2(n+1)
 \mathds{1}_{\{\eta_{n+1}-\eta_n=j\}}\mid
\mathcal{F}_n\right]\Big) \1{(\xi_1(n),\xi_2(n))=(v,0)} 
\end{align}
We now consider the first summand in~\eqref{c10}. For this we define
\begin{align*}
 E^0&:=\left\{ \max_{\ell=1,\ldots,\lfloor v^{1/3}\rfloor}
\zeta_1(\eta_n+\ell)\le v^{2/3}\right\},\\
 E^k&:=\left\{ \max_{\ell=1,\ldots,\lfloor v^{1/3}\rfloor}
\zeta_1(\eta_n+\ell)\in \big(2^{k-1} v^{2/3},2^{k} v^{2/3}\big]\right\}
\quad\text{for }k\ge1,
\end{align*}
in order to control the maximum number of particles in the LP. Using these definitions, we write
\begin{align}\label{c11}
  \left.\E_{(a,b)}\left[\xi_2(n+1)
 \mathds{1}_{\left\{\eta_{n+1}-\eta_n\le v^{1/3} \right\}}
\;\right|\;\mathcal{F}_n\right]
  =&\ \sum_{k=0}^\infty \left.\E_{(a,b)}\left[\xi_2(n+1)
         \mathds{1}_{E^k\cap\left\{\eta_{n+1}-\eta_n\le v^{1/3} 
\right\}}\;\right|\;\mathcal{F}_n\right]\nonumber\\
\le&\ \left.v^{1/3} M v^{2/3} \p_{(a,b)}
\left(\eta_{n+1}-\eta_n\le v^{1/3}\;\right|\;\mathcal{F}_n\right) 
\nonumber\\
&\quad+\sum_{k=1}^\infty v^{1/3} M 2^k v^{2/3}
 \p_{(a,b)}\big(\mathcal{B}_k(n,v)\mid \mathcal{F}_n\big),
\end{align}
where we use the notation
\[
  \mathcal{B}_k(n,v):=\left\{\exists \ell\in\{\eta_n+1,\ldots,\eta_{n+1}\}:\, 
\zeta_1(\ell)>2^{k-1} v^{2/3},\eta_{n+1}-\eta_n\le v^{1/3}\right\}.
\]
Here we note that each particle that leaves the LP starts a new BRW without cookies (as long as the offspring particles do not reach a cookie again) which is transient to the left by assumption. Thus, for each of those particles the expected number of descendants which reach the position $l(\eta_{n+1})$ (and therefore are type-2 particles at time $\eta_{n+1}$) is less than one since they have to move at least two steps to the right. Now we observe that on the event $\{(\xi_1(n),\xi_2(n))=(v,0)\}$ we a.s.\ have
\begin{align}\label{c11a}
&\left.\p_{(a,b)}\left(\eta_{n+1}-\eta_n\le v^{1/3}\;\right|\;\mathcal{F}_n\right)
=\p_v\left(T^{\textnormal{cr}}\le v^{1/3}\right)
\intertext{and}\label{c11b}
 &\left.\p_{(a,b)}\big(\mathcal{B}_k(n,v)\;\right|\;\mathcal{F}_n\big)\le v^{1/3}
 \p_{\left\lceil2^{k-1}v^{2/3}\right\rceil}\left(T^{\textnormal{cr}}
\le v^{1/3}\right)
\end{align}
where $T^{\textnormal{cr}}$ denotes the extinction time of a critical GWP whose offspring distribution is given by the number of particles produced by a single particle in the LP which stay in the LP. (Note that this coincides with the number of type-1 offspring of a type-1 particle.) Now we apply~\eqref{c11a}, \eqref{c11b} and Proposition~\ref{propo3} to \eqref{c11} and a.s.\ obtain
\begin{align}\label{c11c}
 \lefteqn{ \left.\E_{(a,b)}\left[\xi_2(n+1) \mathds{1}_{
\left\{\eta_{n+1}-\eta_n\le v^{1/3} \right\}}\right|\mathcal{F}_n\right]
             \1{(\xi_1(n),\xi_2(n))=(v,0)} }
\nonumber\\
&\le \left[M v \exp\left(-C \frac{v}{v^{1/3}}\right)
\right.
\left.+\sum_{k=1}^\infty M 2^k v^{4/3}
             \exp\left(-C \frac{2^{k-1} v^{2/3}}{v^{1/3}}
\right)\right]
             \1{(\xi_1(n),\xi_2(n))=(v,0)}\nonumber\vphantom{\sum_{k=0}^\infty}\nonumber\\
&  = o(v) \1{(\xi_1(n),\xi_2(n))=(v,0)} \vphantom{\sum_{k=0}^\infty}
\end{align}
where $C>0$ is the constant of Proposition~\ref{propo3}.

Now we deal with the second summand in \eqref{c10}. For some $\delta\in(0,\tfrac13)$ and $j\in\N$ we introduce the events
\begin{align*}
 F_j^0&:=\left\{ \max_{\ell=1,\ldots,\lfloor j^\delta \rfloor}\zeta_1
\big(\eta_n+j-\lfloor j^\delta\rfloor+\ell\big)\le j^{2\delta}\right\},\\
 F_j^k&:=\left\{ \max_{\ell=1,\ldots,\lfloor j^\delta \rfloor}
           \zeta_1\big(\eta_n+j-\lfloor j^\delta\rfloor+\ell\big)
\in\big(2^{k-1} j^{2\delta},2^k
                j^{2\delta}\big]\right\}\quad\text{for }k\ge1,
\intertext{and}
 G_j^0&:=\left\{ \max_{\ell=1,\ldots,j}\zeta_1(\eta_n+\ell)\le j^{1+\delta}
\right\},\\
 G_j^k&:=\left\{ \max_{\ell=1,\ldots,j}
           \zeta_1(\eta_n+\ell)\in\big(2^{k-1} j^{1+\delta},2^k
 j^{1+\delta}\big]\right\}\quad\text{for }k\ge1.
\end{align*}
On the events $G_j^k$ we control the maximum number of particles in the LP up to time~$j$, whereas on~$F_j^k$ we control the maximum number during the $\lfloor j^\delta\rfloor$ time steps before~$j$. We observe that on the event $F_j^k\cap G_j^\ell$ not more than $M\cdot2^\ell j^{2+\delta}$ particles leave the LP up to time $\eta_n+j-\lfloor j^{\delta}\rfloor$ (because of~$G_j^\ell$). Each of those particles starts a BRW without cookies and in average it contributes not more than $(\fip)^{\lfloor j^\delta \rfloor +1} \le (\fip)^{j^\delta}$ to the number of type\hbox{-}2 particles at time $\eta_n+j$. Similarly, on $F_j^k\cap G_j^\ell$ not more than $M 2^k j^{3\delta}$ particles leave the LP from time $\eta_n+j-\lfloor j^\delta\rfloor+1$ 
to time $\eta_n+j$ (because of~$F_j^k$). Further, it holds that each particle that leaves the LP starts a new BRW without cookies and for each of those particles the expected number of descendants which reach the position $l(\eta_{n+1})$ is less than one since they have to move at least two steps to the right. Thus we have
\begin{align}\label{c12}
\lefteqn{ \E_{(a,b)}\left[ \xi_2(n+1) \mathds{1}_{F_j^k\cap G_j^\ell
\cap\{\eta_{n+1}-\eta_n=j\}} \mid  \mathcal{F}_n \right]}\nonumber\\
& \left(M 2^\ell j^{2+\delta} \left(\fip\right)^{j^\delta}
+M 2^k j^{3\delta}\right)
                 \left.\p_{(a,b)}\left(F_j^k\cap G_j^\ell
\cap\left\{\eta_{n+1}-\eta_n=j\right\}\right|\mathcal{F}_n\right).
\end{align}
Now suppose that $\ell\ge k$ and $(k,\ell)\neq(0,0)$. 
Then due to Proposition~\ref{propo3} we have
\begin{align}\label{c13}
  \lefteqn{\p_{(a,b)}\left(F_j^k\cap G_j^\ell\cap\left\{\eta_{n+1}-\eta_n=j
\right\}\left|\;\mathcal{F}_n\vphantom{F_j^k}\right)\right.}
\nonumber \vphantom{\frac{2^{\ell-1} j^{1+\delta}}{j}}\\
 &\le \p_{(a,b)}\left(\exists i\in\{1,\ldots,j\}:\,
            \zeta_1(\eta_n+i)>2^{\ell-1} j^{1+\delta},\, 
\zeta_1(\eta_n+j)=0\, \left|\, \mathcal{F}_n\vphantom{2^{\ell-1} 
j^{1+\delta}}\right)\right.\nonumber \vphantom{\frac{2^{\ell-1} 
j^{1+\delta}}{j}}\\
 &\le j \p_{\left\lceil2^{\ell-1} j^{1+\delta} 
\right\rceil}\left(T^{\textnormal{cr}}\le j\right)
\nonumber \vphantom{\frac{2^{\ell-1} j^{1+\delta}}{j}}\\
 &\le j \exp\left(-\tfrac12C 2^{(\ell+k)/2}  j^\delta\right). 
\vphantom{\frac{2^{\ell-1} j^{1+\delta}}{j}}
\end{align}
If otherwise $k\ge\ell$ and $(k,\ell)\neq(0,0)$, then again due to Proposition~\ref{propo3} we have
\begin{align}\label{c14}
\lefteqn{\p_{(a,b)}\left(F_j^k\cap G_j^\ell\cap\left\{\eta_{n+1}-\eta_n=j\right\}
\left|\mathcal{F}_n\vphantom{F_j^k}\right)
        \right.}\nonumber \vphantom{\frac{2^{\ell-1} j^{1+\delta}}{j}}\\
&\le \p_{(a,b)}\left(\exists i \hspace{-1pt}\in\hspace{-1pt}\{j-\lfloor 
j^\delta\rfloor+1,\ldots,j\}:\,
            \zeta_1(\eta_n+i)>2^{k-1} j^{2\delta},\, \zeta_1(\eta_n+j)=0\,
 \left|\, \mathcal{F}_n\vphantom{2^{k-1} j^{2\delta}}\right)
        \right.\nonumber\vphantom{\frac{2^{\ell-1} j^{1+\delta}}{j}}
\displaybreak[0]\\
 &\le j \p_{\left\lceil2^{k-1} j^{2\delta} \right\rceil}
\left(T^{\textnormal{cr}}\le j^\delta\right)
            \nonumber \vphantom{\frac{2^{\ell-1} j^{1+\delta}}{j}}
\displaybreak[0]\\
 &\le j \exp\left(-\tfrac12C 2^{(\ell+k)/2}  j^\delta\right). 
\vphantom{\frac{2^{\ell-1} j^{1+\delta}}{j}}
\end{align}
With the help of~\eqref{c13} and~\eqref{c14} together with~\eqref{c12} we a.s.\ obtain
\begin{align}\label{c14a}
 \lefteqn{ \E_{(a,b)}\big[\xi_2(n+1) \mathds{1}_{\left\{\eta_{n+1}-\eta_n=j
\right\}}\mid \mathcal{F}_n\big]}\vphantom{\sum_{i=1}^\infty}\nonumber\\
&= \sum_{k,\ell=0}^\infty\left.\E_{(a,b)}\left[\xi_2(n+1)
 \mathds{1}_{F_j^k\cap G_j^\ell\cap\left\{\eta_{n+1}-\eta_n
=j\right\}}\right|\mathcal{F}_n\right]\vphantom{\sum_{i=1}^\infty}
\nonumber \displaybreak[0]\\
&\le  \left(M j^{2+\delta} \left(\fip\right)^{j^\delta}
+M j^{3\delta}\right) \p_{(a,b)}\left( \eta_{n+1}-\eta_n=j\, |\, 
\mathcal{F}_n\right)\vphantom{\sum_{(k,l)\neq(0,0)}}
\vphantom{\sum_{i=1}^\infty}\nonumber\\
&\quad+\sum_{(k,\ell)\neq(0,0)}\!
\left(M2^\ell j^{2+\delta}\! \!\left(\fip\right)^{j^\delta}\!
+M 2^k j^{3\delta}\right)
       j \exp\left(-\tfrac12C 2^{(\ell+k)/2} 
 j^\delta\right)\vphantom{\sum_{i=1}^\infty}\nonumber \displaybreak[0]\\
&\le {\new C_2 j^{3\delta} \p_{(a,b)}\left( \eta_{n+1}-\eta_n=j\,
 |\, \mathcal{F}_n\right)\vphantom{\sum_{i=1}^\infty}+ C_2 j^{1+3\delta} \sum_{i=1}^\infty (i+1) 2^i
       \exp\left(-\tfrac12C 2^{i/2} j^\delta\right)
\vphantom{\sum_{i=1}^\infty}}
\end{align}
{\new for some constant $C_2 > 0$ which does not depend on $j$.} By Proposition~\ref{propo4}, on~$\{(\xi_1(n),\xi_2(n))=(v,0)\}$ we a.s.\ have $\p_{(a,b)}\left( \eta_{n+1}-\eta_n= j\, |\, \mathcal{F}_n\right)\le C_3 \frac{v}{j^2}$ {\new (for some constant $C_3$)}, and therefore \eqref{c14a} yields
\begin{align}\label{c15}
 \lefteqn{ \E_{(a,b)}\left[\xi_2(n+1) \mathds{1}_{\left\{\eta_{n+1}
-\eta_n=j\right\}}\mid \mathcal{F}_n\right] \1{(\xi_1(n),\xi_2(n))=(v,0)}}
\nonumber\\
&\le \Bigg[ C_4 j^{3\delta-2} v \hspace{-1pt}+C_2 j^{1+3\delta}
 \exp\left(-\tfrac14C 2^{\frac12}  j^\delta\right)  
\sum_{i=1}^\infty(i+1)2^i 
 \exp\left(-\tfrac14C 2^{i/2} 1\right)\Bigg]
\vphantom{\Bigg[\sum_{i=1}^\infty\Bigg]}  \1{(\xi_1(n),\xi_2(n))=(v,0)}
\vphantom{\Bigg[\sum_{i=1}^\infty\Bigg]}\nonumber\\
&{\new \le \ } C_5 j^{3\delta-2} v \1{(\xi_1(n),\xi_2(n))=(v,0)} 
\vphantom{\sum_{m=1}^\infty}\vphantom{\Bigg[\sum_{i=1}^\infty\Bigg]}
\end{align}
{\new for suitable constants $C_4, C_5 > 0$}. Using the estimates \eqref{c11c} and \eqref{c15} for the two summands in~\eqref{c10}, we conclude
\begin{align*}
  \E_{(a,b)}[\xi_1(n+1) + \xi_2(n+1)|\mathcal{F}_n] 
 \1{(\xi_1(n),\xi_2(n))=(v,0)} 
 \le&\ \Big[o(v) +v \sum_{j>v^{1/3}} {\new C_5 j^{3\delta-2}}\Big]
 \1{(\xi_1(n),\xi_2(n))=(v,0)} \\
 =&\ v o(v) \1{(\xi_1(n),\xi_2(n))=(v,0)} ,
\end{align*}
and therefore there exists $v_0\in\N$ such that
\[
 \E_{(a,b)}[\xi_1(n+1) + \xi_2(n+1)\mid\mathcal{F}_n] 
 \1{(\xi_1(n),\xi_2(n))=(v,0)}\le v \1{(\xi_1(n),\xi_2(n))=(v,0)}
\]
for $v\ge v_0$.

For the general case, in which we can also have type-2 particles at time $\eta_n$, we notice that for
\begin{equation}\label{condition u}
u\ge(1+c) v_0
\end{equation}
we have
\[
 \E_{(a,b)}[\xi_1(n+1) + \xi_2(n+1)\mid\mathcal{F}_n]
  \mathds{1}_{A_2}\le\big[\xi_1(n) + \xi_2(n)\big] \mathds{1}_{A_2} 
\]
since on $A_2$ the type-2 particles which exist at time $\eta_n$ evolve independently of the LP until time $\eta_{n+1}$. {\new This finishes the proof of Lemma~\ref{c8.1}.}
\renewcommand{\qedsymbol}{$\square$}
\end{proof}
Now we fix $u\in\N$ such that Lemma~\ref{c2} and Lemma~\ref{c8.1} hold. Further, we define
\[
 \tau:=\inf\{n\in\N_0:\, \xi_1(n)+\xi_2(n)\le u\}.
\]
Due to Lemma~\ref{c2} and, respectively, Lemma~\ref{c8.1}, we see that in the subcritical (i.e.\ $\pc\mc<1$) as well as in the critical (i.e.\ $\pc\mc=1$) case $\big(\xi_1(n\wedge\tau) + \xi_2(n\wedge\tau)\big)_{n\in\N_0}$ is a non-negative supermartingale with respect to $(\mathcal{F}_n)_{n\in\N_0}$ and $\p_{(a,b)}$ for arbitrary $(a,b)\in\mathcal{S}$. Thus it converges $\p_{(a,b)}$-a.s.\ to a finite random variable $\mathcal{X}(a,b)$. Since we have $\xi_1(n\wedge\tau) + \xi_2(n\wedge\tau)
\in\N_0$ for all $n\in\N_0$ and since the probability for this process to eventually stay at a constant level $v>u$ for all times is equal to 0, we conclude that $\mathcal{X}(a,b)\le u$ holds $\p_{(a,b)}$-a.s. Therefore, for all $(a,b)\in\mathcal{S}$ we have
$\p_{(a,b)}\left(\exists n\in\N_0:\, \xi_1(n)+\xi_2(n)\le u\right)=1$, and hence
\begin{align} \label{c16}
 &\p_{(a,b)}\left(\exists n\in\N_0:\, \zeta_1(n)+\zeta_2(n)\le u\right)=1.
\end{align}
We now introduce the following random times
\begin{align*}
\sigma_i&:=  \inf \{n >\tau_{i}: \ l(n) = l(\tau_i) + 2 \}, &\text{for } 
i\geq 0, \\
\tau_i&:= \inf \{ n \geq \sigma_{i-1}: \ \zeta_1(n) + \zeta_2(n) \leq u \} ,
&\text{for } i\geq 1 ,
\end{align*}
with $\tau_0:=0$. Here~$\sigma_i$ denotes the first time at which two more cookies have been eaten since~$\tau_i$. Moreover, we observe that $\big(Y(n) \big)_{n \in \N_0} := \big(\big(Z_n(x)\big)_{x \in \Z}, l(n) \big)_{n \in \N_0}$
is a Markov chain with values in $\mathcal{S}$, which can only reach finitely (thus countably) many states within finite time. Therefore,~\eqref{c16} yields for $i \in \N_0$
\begin{equation} \label{c17}
\p_{(e_0,0)} \big(\tau_{i+1}< \infty \mid  \sigma_i < \infty \big)= 1
\end{equation}
where $(e_0,0)$ denotes the usual starting configuration with one particle and the leftmost cookie at position~$0$. Finally, we have
\begin{align} 
\label{c18}
 \p_{(e_0,0)} \big(\sigma_i= \infty  \mid   \tau_i < \infty  \big) 
\ge  \left(\qc \p(\Lambda^+_1=0)\right)^{M u}=:\gamma\in(0,1).
\end{align}
This inequality holds since at the first time after $\tau_i$, at which any particle reaches the leftmost cookie again, there are not more than $u$ type-1 particles. Each of those type-1 particles cannot produce more than~$M$ particles in the next step. 
Afterwards, the probability for any direct offspring of the type-1 particles to move to the left and then produce offspring 
which escape to $-\infty$ is given by $\qc \p(\Lambda^+_1=0)$. All the remaining type\hbox{-}2 particles escape to the left 
with probability $\p(\Lambda^+_1=0)$ since one more cookie has been eaten. In this case, only one more cookie is consumed after the random time $\tau_i$ implying $\sigma_i = \infty$.

Using~\eqref{c17} and~\eqref{c18} we can conclude
\begin{align*}
\p_{(e_0,0)} \big(\sigma_i < \infty \ 
\forall\ i \in \N \big) &\leq \p_{(e_0,0)} \big(\sigma_k < \infty \big) \\  
&=   \p_{(e_0,0)} \big(\sigma_0 < \infty \big)  \prod_{i=1}^{k} 
\p_{(e_0,0)} \big(\sigma_i < \infty  \mid   \tau_i < \infty\big) 
 \p_{(e_0,0)}\big(\tau_i < \infty \mid  \sigma_{i-1} < \infty\big) \\ 
&\leq  \big( 1 - \gamma \big)^k \xrightarrow[k \to \infty]{} 0.
\end{align*}
In particular this implies that a.s.\ only finitely many cookies are consumed and this yields that the CBRW is transient.$\hfill\blacksquare$

\section{Final remarks} 
\label{section6}
At the end, let us consider a CBRW with one cookie at every position $x\in\Z$, i.e.~$c_0(x):=1$ for all $x\in\Z$. In this case the leftmost cookie on the positive semi-axis
\begin{align*}
l(n)= & \min\{x\geq 0:\, c_n(x)=1\}
\intertext{and the rightmost cookie on the negative semi-axis}
r(n):= & \max\{x\leq 0:\, c_n(x)=1\}
\end{align*}
are of interest. With these two definitions we can introduce the right LP
$\lpr{n}:=Z_n(l(n))$, and the left LP
$\lpl{n}:=Z_n(r(n))$.
Using Theorems~\ref{thm1}, \ref{thm2}, and~\ref{thm3} and the symmetry of the CBRW with regard to the origin, one can derive the following results:
\begin{thm} 
\label{thm6.1}
Suppose that the BRW without cookies is transient to the right.
\begin{enumerate}
\item
If the right LP is supercritical, i.e.\ $\pc\mc > 1$ holds, then
\begin{enumerate}
\item
the CBRW is strongly recurrent iff $\pc\mc  \fim \ge 1$,
\item
the CBRW is weakly recurrent iff 
$\pc\mc  \fim < 1$ and $\qc\mc > 1$,
\item
the CBRW is transient {\new to the right} iff
$\pc\mc  \fim < 1$ and $\qc\mc \le 1$.
\end{enumerate}
\item
If the right LP is subcritical or critical, i.e.\ $\pc\mc \leq  1$ 
holds, then 
\begin{enumerate}
\item
the CBRW is weakly recurrent iff the left LP is supercritical, i.e.
$\qc\mc > 1$,
\item
the CBRW is transient {\new to the right} iff the left LP is subcritical or critical, i.e.
$\qc\mc \le 1$.
\end{enumerate}
\end{enumerate}
\end{thm}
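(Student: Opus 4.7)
The plan is to derive Theorem~\ref{thm6.1} from Theorems~\ref{thm1} and~\ref{thm3} combined with the reflection symmetry $x \mapsto -x$. This reflection maps the CBRW with parameters $(\pc,\qc,\po,\qo,\mu_c,\mu_0)$ to one with parameters $(\qc,\pc,\qo,\po,\mu_c,\mu_0)$; it swaps the roles of the right and left LP, interchanges $\fip$ and $\fim$, and exchanges ``transient to the right'' with ``transient to the left''. In particular, Theorems~\ref{thm1} and~\ref{thm3} applied to the mirrored CBRW yield the classification for a CBRW with cookies only on the negative semi-axis under our hypothesis that the original BRW without cookies is transient to the right. The structural fact used throughout is that a descendant of the right LP reaches $0$ only after traversing the cookie-free interval $\{1,\ldots,l(n)-1\}$, so the distribution of its first visit at $0$ is identical to that in the one-sided model of Theorem~\ref{thm1}; by symmetry, the analogous statement holds for descendants of the left LP with respect to $\{r(n)+1,\ldots,-1\}$.

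For part~(a) with $\pc\mc>1$, I would split into three cases. In case~(a)(i), $\pc\mc\fim\ge1$, the proof of Theorem~\ref{thm1}(a)(i) can be copied verbatim: the random sets $L_n$ and the conditional probabilities $\p(A_n \mid L_n)$ used there have identical joint laws in the two-sided model by the observation above, so the right LP alone already forces infinitely many visits at $0$ a.s. In case~(a)(ii), $\pc\mc\fim<1$ and $\qc\mc>1$, I need to show $\p(|Z_n(0)|\to 0) \in (0,1)$. The positive-probability-of-recurrence is obtained from the mirror of Theorem~\ref{thm3}(a) applied to the left LP (supercritical left LP and mirrored transient-to-left BRW without cookies give positive probability of infinitely many returns to $0$). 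The positive-probability-of-transience follows from the fact that the left LP is a supercritical Galton-Watson process and dies out with strictly positive probability, after which no further cookies are consumed on the negative side and Theorem~\ref{thm1}(a)(ii) applies, giving only finitely many further visits. In case~(a)(iii), $\pc\mc\fim<1$ and $\qc\mc\le1$, I would decompose each visit at $0$ by the side of the most recently consumed cookie in the visiting particle's ancestral line. The ``positive side'' contribution is a.s. finite by Theorem~\ref{thm1}(a)(ii) applied to the right LP, while the ``negative side'' contribution is a.s. finite by the mirror of Theorem~\ref{thm3}(b) applied to the left LP; together with the finitely many visits from descendants that never consumed any cookie beyond the initial one, this yields transience.

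Part~(b) is simpler. When $\pc\mc\le1$, Theorem~\ref{thm1}(b) applied to the right LP shows that it contributes only finitely many visits at $0$, so the dichotomy is governed by the left LP alone. By the mirror of Theorem~\ref{thm3}, the CBRW is weakly recurrent iff $\qc\mc>1$ and transient otherwise, which gives (b)(i) and (b)(ii) respectively.

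The hard part will be case~(a)(iii), where one must handle particles whose ancestry oscillates between the two semi-axes and therefore consumes cookies on both sides alternately. The resolution I envisage is to attribute each visit at $0$ to the single last cookie consumed along its ancestral line and to apply the one-sided estimate (Theorem~\ref{thm1}(a)(ii) or its mirror of Theorem~\ref{thm3}(b)) to each class separately; the key is that, between any two successive cookie consumptions in a given lineage, the offspring lying strictly between the positions of the two current leading cookies evolve as a BRW without cookies, so the one-sided theorems apply without modification to each class.
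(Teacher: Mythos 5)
Your overall route -- reflection symmetry plus Theorems~\ref{thm1} and~\ref{thm3} -- is exactly what the paper intends: the paper gives no detailed argument for Theorem~\ref{thm6.1}, stating only that it follows from Theorems~\ref{thm1}--\ref{thm3} and the symmetry of the CBRW with respect to the origin. Your observation that a descendant of either LP reaching $0$ does so through a cookie-free stretch, so its first-passage law at $0$ is that of the BRW without cookies, is also the right structural point, and cases (a)(i) and the ``positive probability of recurrence'' halves of (a)(ii) and (b)(i) go through essentially as you describe.

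There are, however, two concrete soft spots. First, in (a)(ii) your justification of $\p(|Z_n(0)|\to 0)>0$ is wrong as stated: extinction of the left LP does \emph{not} imply that no further cookies are consumed on the negative semi-axis, because descendants of the right LP that later cross $0$ (there are a.s.\ finitely many, but typically at least one) hit the intact cookies at $-1,-2,\ldots$ and can restart a supercritical left LP, which then survives with positive probability. One needs instead an event such as ``no particle ever returns to $0$ after time $0$'': on the event that the right LP never dies out, $|L_n|$ grows like $(\pc\mc)^n$ while each particle of $L_n$ fails to send a descendant to $0$ with probability at least $1-c(\fim)^{l(n)}$, and conditional independence of these failures plus $\pc\mc\fim<1$ makes the product positive; a similar care is needed for $\p(\text{finitely many visits})>0$ in (b)(i). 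Second, in (a)(iii) and (b) you invoke the one-sided theorems ``without modification'' for each class, but the two cookie fronts feed each other: particles crossing $0$ can restart the opposite LP, so neither the right LP nor the left-side system is a priori distributed as in the one-sided model. The missing decoupling step is to show first (as in the proof of Theorem~\ref{thm1}(a)) that the right LP a.s.\ eventually survives forever, after which its front advances at speed $1$, is never again fed from outside, and the Theorem~\ref{thm1}(a)(ii)-type computation bounds the number of particles ever crossing to the negative side; only after the last such arrival does the left-side system evolve autonomously, and then the mirrored Theorem~\ref{thm3}(b) can be applied -- conveniently, its proof in the paper is written for arbitrary finite starting configurations $(a,b)\in\mathcal{S}$, which is exactly what this step requires. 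With these repairs your attribution-to-the-last-consumed-cookie decomposition does yield (a)(iii) and (b)(ii), but as written the argument is circular at the point where the two sides interact.
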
 

\begin{thm}
Suppose that the BRW without cookies is strongly recurrent. Then the CBRW is strongly recurrent, no matter which kinds of right and left LP we have.
\end{thm}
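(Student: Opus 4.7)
The plan is to adapt the proof of Theorem~\ref{thm2} to the two-sided setting, splitting into the cases $\mo = 1$ and $\mo > 1$ as before.

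If $\mo = 1$, the BRW without cookies is a nearest-neighbour random walk and strong recurrence forces $\po = 1/2$. Since particles can only branch at cookie positions, it suffices to show that the trajectory of one chosen descendant line returns to the origin infinitely often. This trajectory is a one-dimensional excited random walk with one cookie at every site and cookie drift $2\pc - 1$; its recurrence follows from classical results on one-dimensional multi-excited random walks with bounded drift (e.g., Theorem~12 in~\cite{Z05}). Alternatively, one can sandwich the trajectory between a symmetric random walk and a one-cookie excited random walk with drift $2\pc - 1$ on each side of the origin, both of which are recurrent, exactly as in the one-sided argument.

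If $\mo > 1$, I mirror the rate-function argument of Theorem~\ref{thm2}. Strong recurrence yields $\log \mo > I(0)$, with $I$ the large-deviation rate function of the $(\po,\qo)$-random walk, and continuity of $I$ on $(-1,1)$ gives small $\varepsilon, \delta > 0$ with $\log \mo > I(-\varepsilon) + \delta$. As in Theorem~\ref{thm2}, the embedded GWP in the BRW without cookies consisting of descendants that move at least $k_0\varepsilon$ to the left in every block of $k_0$ generations is supercritical and survives with some probability $p_{\textnormal{sur}} > 0$. For every existing particle $\nu$ in the CBRW at position $X_\nu$, the probability of having a descendant at $X_\nu - k_0$ after $k_0$ generations is bounded below by $\min(\qc,\qo)\cdot\qo^{k_0-1}$, independently of $\nu$ and of the cookie configuration. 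With positive probability bounded away from $0$, this starting descendant initiates a surviving leftward-escaping embedded GWP; once such a GWP produces a descendant at a very negative position $-R$, all cookies in $[-R,0]$ along the path have been consumed, so the subsequent sub-BRW is an honest BRW without cookies started at $-R$, which by strong recurrence visits $0$ infinitely often.

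The main obstacle is to justify that the leftward embedded GWP survives in spite of the cookies on the negative semi-axis (which do not exist in the one-sided setting of Theorem~\ref{thm2}). The cleanest way to handle this is to restrict to descendants that are born at currently cookie-free positions, i.e.\ in the interval $(r(n), l(n))$ at the relevant time $n$; their sub-BRWs are exactly BRWs without cookies. Using that cookies are consumed monotonically and only once per site, one can arrange $\varepsilon$ small enough so that this restricted sub-GWP remains supercritical despite the occasional loss due to passing through cookie sites. Combined with the fact that the CBRW never dies out (since $\mu_c(0) = \mu_0(0) = 0$) and an iteration yielding a uniform lower bound $c > 0$ on the probability that each existing particle has a descendant visiting $0$, this gives the strong recurrence of the CBRW.
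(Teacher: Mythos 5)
Your overall strategy (re-running the proof of Theorem~\ref{thm2} on the two-sided model, split into $\mo=1$ and $\mo>1$) is in the spirit of the paper, which only remarks that the results of Section~\ref{section6} follow from Theorems~\ref{thm1}--\ref{thm3} and reflection symmetry, and your $\mo=1$ case is fine: the chosen descendant line is dominated by a one-cookie-per-site excited random walk with excitation $|2\pc-1|<1$ on all of $\Z$, which is recurrent by \cite{BW}, \cite{Z05}, and the sandwich coupling works as in the one-sided proof.

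The $\mo>1$ case, however, has a genuine gap, located exactly where two-sidedness matters. First, your final step is unjustified: after a particle has been placed at a very negative position $-R$, its descendants do \emph{not} form an honest BRW without cookies, because every site to the left of the current left front (possibly including $-R$ itself) still carries a cookie, so any line that dips left of the front reproduces with $\mu_c$ and moves with $\pc,\qc$. Hence strong recurrence of the no-cookie BRW cannot be invoked wholesale; what is needed is a lower bound, uniform in $R$ and in the distance to the left front, for the probability that some descendant reaches $0$ \emph{before its ancestral line ever touches a cookie}. In the one-sided Theorem~\ref{thm2} this is automatic, since all cookies sit at $l(\cdot)\ge 1$ and any line started on the negative half-line must pass through $0$ before reaching them; in the two-sided model this is precisely what can fail, because a line may be contaminated at the left front without ever approaching $0$. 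Repairing this requires a new ingredient absent from both the one-sided proof and your sketch, e.g.\ an embedded supercritical GWP of descendants drifting towards $0$ while constrained never to go below their starting level (obtainable from $\log\mo>I(\varepsilon)$ together with a ballot-type path restriction), whose survival probability then gives a constant independent of $R$. Secondly, your proposed fix for the leftward escape ("restrict to descendants born at cookie-free positions \ldots arrange $\varepsilon$ small enough") does not work as stated: every site strictly left of the current left front carries a cookie, so leftward progress necessarily passes through cookie sites, where the relevant parameters are $\qc\mc$ (the left LP data), which are arbitrary under the hypothesis and are not controlled by the choice of $\varepsilon$ in the no-cookie rate function; moreover the "restricted sub-GWP" is not a Galton--Watson process, since its members interact through the shared cookie environment. (In fact the leftward excursion is unnecessary: every particle lies in $[r(n),l(n)]$, one step puts a child into the cookie-free corridor containing $0$ with probability at least $\min(\pc,\qc)$, and the constrained-GWP corridor argument already yields the uniform constant $c>0$, after which one concludes as in Theorem~\ref{thm2}. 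A minor related slip: two-sidedly the $k_0$-step bound should be $\min(\qc,\qo)^{k_0}$ rather than $\min(\qc,\qo)\,\qo^{k_0-1}$.)
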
 
\begin{thm}
Suppose that the BRW without cookies is transient to the left. Due to the symmetry of the process we get the same result as in 
Theorem~\ref{thm6.1} if we just replace right LP by left LP, $\pc$ by $\qc$, $\fim$ by $\fip$ and {\new ``to the right'' by ``to the left''}.
\end{thm}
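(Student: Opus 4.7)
The plan is to reduce this theorem to Theorem~\ref{thm6.1} by invoking the reflection symmetry of the model with two-sided cookies. Concretely, define the map $\Psi:\Z\to\Z$, $\Psi(x):=-x$, and consider the CBRW $(X'_\nu)_{\nu\in\mathcal{Z}}$ obtained from the original process $(X_\nu)_{\nu\in\mathcal{Z}}$ by replacing the parameters $(\mu_c,\pc,\qc,\mu_0,\po,\qo)$ with $(\mu_c,\qc,\pc,\mu_0,\qo,\po)$. Since the initial cookie configuration $c_0(x)\equiv 1$ on $\Z$ is invariant under $\Psi$, since the initial particle sits at $0=\Psi(0)$, and since the branching rules depend only on whether a site carries a cookie (not on its sign), a straightforward coupling shows that $(X'_\nu)_{\nu\in\mathcal{Z}}$ has the same law as $(\Psi(X_\nu))_{\nu\in\mathcal{Z}}$. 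In particular, the two processes agree on the events $\{|Z_n(0)|\to 0\}$, and ``transient to the left'' for the original process corresponds to ``transient to the right'' for the reflected one.

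Next, I would translate the quantities that appear in Theorem~\ref{thm6.1} under this symmetry. The reflected BRW without cookies has transition probabilities $\qo,\po$; hence it is transient to the right iff the original BRW without cookies is transient to the left. By Proposition~\ref{prop1.11a}, the quantity $\fim$ for the reflected process equals $\fip$ for the original process (the formulas~\eqref{prop1.11a.r2} are exchanged under the swap $\po\leftrightarrow\qo$). Likewise, the right LP of the reflected CBRW has mean offspring $\qc\mc$ and coincides in distribution with the left LP of the original CBRW, while the left LP of the reflected CBRW corresponds to the right LP of the original with mean $\pc\mc$.

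Substituting these identifications into the statement of Theorem~\ref{thm6.1} yields precisely the claimed classification for the original process, with $\pc,\qc,\fim,\fip$ and left/right LP interchanged as described, and with ``transient to the right'' becoming ``transient to the left''. The main point to verify carefully is just that the symmetry $\Psi$ really does preserve the distribution of the whole configuration-valued Markov chain $((Z_n(x))_{x\in\Z},l(n),r(n))$ once the swap of parameters is made; this is immediate from the definition of the dynamics because every rule (branching rates, step probabilities, cookie consumption) is formulated in a way that is equivariant under the joint relabeling $x\mapsto -x$ and $(\pc,\po)\leftrightarrow(\qc,\qo)$. There is no genuine analytical obstacle; the work lies entirely in bookkeeping the correspondence between the relevant quantities in the two pictures.
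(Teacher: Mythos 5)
Your proposal is correct and is essentially the paper's own argument: the paper likewise derives this theorem from Theorem~\ref{thm6.1} via the reflection symmetry of the two-sided cookie configuration about the origin, i.e.\ by the relabeling $x\mapsto -x$ together with the parameter swap $\pc\leftrightarrow\qc$, $\po\leftrightarrow\qo$, which exchanges the roles of the right and left LP and of $\fim$ and $\fip$. No further comment is needed.
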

{\new \section*{Acknowledgement:} 
We would like to thank an anonymous referee for carefully reading the first version of this paper and thoughtful and constructive comments.}

\end{document}